\documentclass[11pt]{article}

\usepackage[a4paper,margin=28mm]{geometry}
\usepackage[round,authoryear]{natbib}

\usepackage{microtype}
\usepackage{amsmath,amssymb,amsthm,mathtools,bm}
\usepackage{booktabs,array,tabularx}
\usepackage{graphicx}
\usepackage{float}
\usepackage{xcolor}
\usepackage[colorlinks=true,allcolors=blue!55!black]{hyperref}
\usepackage[nameinlink,noabbrev]{cleveref}

\usepackage{authblk}
\usepackage{enumitem}

\newfloat{algorithm}{tbp}{loa}
\floatname{algorithm}{Algorithm}

\newcommand{\R}{\mathbb{R}}
\newcommand{\E}{\mathbb{E}}
\newcommand{\Pp}{\mathbb{P}}
\newcommand{\PS}{\mathbb{P}_{S}}
\newcommand{\ES}{\mathbb{E}_{S}}
\newcommand{\Pstar}{\mathbb{P}^{*}}
\newcommand{\Estar}{\mathbb{E}^{*}}
\newcommand{\norm}[1]{\left\lVert #1\right\rVert}
\newcommand{\op}{\mathrm{op}}
\newcommand{\diag}{\operatorname{diag}}
\newcommand{\tr}{\operatorname{tr}}
\newcommand{\rank}{\operatorname{rank}}
\newcommand{\argmin}{\operatorname*{arg\,min}}
\newcommand{\dto}{\rightsquigarrow}
\newcommand{\pto}{\xrightarrow{p}}

\newcommand{\appref}[1]{\hyperref[#1]{Appendix~\ref*{#1}}}
\newcommand{\bbeta}{\bm{\beta}}
\newcommand{\bbetahat}{\widehat{\bm{\beta}}}
\newcommand{\bbetastar}{\widehat{\bm{\beta}}^{*}}
\newcommand{\by}{\bm{y}}
\newcommand{\br}{\bm{r}}
\newcommand{\bg}{\bm{g}}
\newcommand{\bs}{\bm{s}}

\newcommand{\bz}{\bm{z}}
\newcommand{\bw}{\bm{w}}
\newcommand{\bzero}{\bm{0}}

\newtheorem{assumption}{Assumption}
\newtheorem{theorem}{Theorem}
\newtheorem{proposition}{Proposition}
\newtheorem{corollary}{Corollary}
\newtheorem{lemma}{Lemma}
\theoremstyle{remark}
\newtheorem{remark}{Remark}
\crefname{assumption}{Assumption}{Assumptions}
\Crefname{assumption}{Assumption}{Assumptions}
\crefname{proposition}{Proposition}{Propositions}
\Crefname{proposition}{Proposition}{Propositions}
\crefname{corollary}{Corollary}{Corollaries}
\Crefname{corollary}{Corollary}{Corollaries}
\crefname{lemma}{Lemma}{Lemmas}
\Crefname{lemma}{Lemma}{Lemmas}

\title{Bootstrap Error Estimation and Sketch-Size Selection for Sketched Ridge Regression}

\author[1]{Akito Narahara}
\author[1]{Takayuki Kawashima}
\author[1,2]{Takafumi Kanamori}

\affil[1]{Department of Mathematical and Computing Science, \newline
Institute of Science Tokyo, Tokyo, Japan}
\affil[2]{RIKEN Center for Advanced Intelligence Project, Tokyo, Japan}

\date{}

\hypersetup{
  pdftitle={Bootstrap Error Estimation and Sketch-Size Selection for Sketched Ridge Regression},
  pdfauthor={Akito Narahara, Takayuki Kawashima, Takafumi Kanamori},
  pdfkeywords={randomized numerical linear algebra, ridge regression, bootstrap, Monte Carlo error, tolerance limits, sketch-size selection}
}

\begin{document}
\maketitle

\begin{abstract}
Randomized sketching reduces the computational cost of large ridge-regression problems, but the coefficient error depends on the realized sketch. We extend the paired-row bootstrap from randomized least squares to ridge regression, enabling coefficient-error estimation using only compressed data. Under a fixed coefficient dimension and increasing data and sketch sizes, we derive asymptotic linear representations and Gaussian limits for the sketched estimator and the conditional bootstrap distribution. These results establish uniform consistency of the bootstrap error distribution and asymptotically exact coverage when the estimator and error bound are computed from the same sketch. For sketches with independent, mean-zero, variance-one entries, an explicit covariance formula separates the effects of the residual, regularization, and fourth moment of the sketch entries, and shows that Rademacher entries minimize the leading covariance matrix in the Loewner order. We also develop a fast linearized bootstrap, an order-statistic correction for finitely many bootstrap replicates, and a Bonferroni rule for selecting from a fixed set of sketch sizes. Experiments on two real and two synthetic data sets support the proposed methods. With a sketch size 15 times the number of coefficients, 199 bootstrap replicates, and nominal coverage of 95\%, the bootstrap with refitting attains coverage 
between 92.0\% and 95.3\%; after the order-statistic correction, coverage ranges from 94.7\% to 97.7\%.
\end{abstract}

\noindent\textbf{Keywords:}
randomized numerical linear algebra; ridge regression; bootstrap; Monte Carlo error; tolerance limits; sketch-size selection

\section{Introduction}
\label{sec:introduction}

Randomized numerical linear algebra (RandNLA) uses randomization to reduce a
large linear algebra problem to a smaller problem that is cheaper to solve.
These methods are widely used in scientific computing, statistics, and
machine learning; broad accounts include \citet{mahoney2011randomized},
\citet{martinsson2020randomized}, \citet{murray2023perspective}, and
\citet{derezinski2024recent}. A basic example is \emph{sketch-and-solve}. A
tall design matrix and its response vector are compressed to fewer rows, and
the reduced problem is solved in place of the original problem. The resulting
coefficient vector depends on the random sketch. Its approximation error is
therefore random even when the data are fixed.

This paper studies how to estimate that error after computing a classical
sketched ridge-regression solution. Prior work includes error bounds derived
before the sketch is drawn, analyses of statistical risk, iterative methods,
model averaging, and lower bounds
\citep{avron2017sharper,wang2017sketched,chowdhury2018iterative,
kacham2022sketching,patil2024asymptotically,lejeune2024pseudoinverse}.
We instead estimate the coefficient error for the realized sketch. We address
three questions:
\begin{enumerate}[label=\textbf{Q\arabic*.}]
  \item How accurately can the coefficient error at the current sketch size be estimated from the compressed problem?
  \item How should the reported error bound account for Monte Carlo error from a finite number of bootstrap replicates?
  \item How can a sketch size be selected from a finite set of candidates while controlling the probability that the coefficient error at the selected size exceeds a prescribed tolerance?
\end{enumerate}
The design-response pair is treated as fixed throughout, and probability refers to randomized computation and bootstrap resampling.

The closest related method is the bootstrap of compressed design--response
pairs developed by \citet{lopes2018least} for randomized least squares. Their
method estimates error from the compressed rows after the sketch has been
computed. They establish asymptotically valid coverage for an error bound
computed from the realized sketch as the number of bootstrap replicates
diverges. They also allow a general error norm and use the $m^{-1/2}$ error
rate to guide sketch-size selection. Thus, resampling compressed rows,
estimating error after sketching, proving asymptotic bootstrap validity, and
using the $m^{-1/2}$ error rate for sketch-size selection are not new
contributions of this paper.

Building on \citet{lopes2018least}, we extend their validity theory to
sketched ridge regression and allow the normalized regularization parameter
to vary with problem size. This extension provides the theoretical basis for
the results below. Our main contributions beyond this extension are as
follows.
\begin{enumerate}
 \item \textbf{Asymptotic covariance for ridge regression.}
Within this ridge framework, we derive the covariance matrix of the leading
error term and separate the contributions from the residuals, regularization,
and fourth moment of the sketch entries. Among sketches whose entries are
i.i.d. with mean zero and variance one, Rademacher sketches yield the smallest
covariance matrix in the Loewner order.

  \item \textbf{Fast linearized bootstrap.}
  We replace repeated ridge refits by a first-order update. For each fixed
  bootstrap replicate, the resulting linearized error is first-order
  equivalent to the error obtained by refitting; see
  \Cref{prop:linearized-equivalence}.

  \item \textbf{Correction for finitely many bootstrap replicates and adjustment for multiple comparisons.}
  A distribution-free order-statistic correction controls the probability of
  underestimating the exact conditional bootstrap quantile when only $B$
  replicates are used. Separately, a Bonferroni correction computes an
  adjusted error bound at each candidate sketch size. If the error bound at
  every candidate size has the required marginal coverage, this rule
  asymptotically controls the probability of selecting a sketch size whose
  coefficient error exceeds the tolerance; see \Cref{prop:mc-protected} and
  \cref{thm:simultaneous-selection}.

  \item \textbf{Numerical evaluation.}
  The experiments examine four issues: coverage when the estimator and error
  bound use the same sketch; the accuracy and cost of linearization; the
  order-statistic correction for Monte Carlo error; and the effect of a
  multiple-comparison adjustment on the selected sketch size and the
  probability of exceeding the prescribed tolerance. Earlier benchmark
  settings and additional sensitivity analyses are reported in
  \appref{sec:supp-experiments}.
\end{enumerate}

Our problem differs from prediction-risk tuning for sketched ensembles \citep{patil2024asymptotically}, runtime-oriented solver tuning \citep{cho2025surrogate}, sequential procedures that increase the sketch size \citep{chen2025sequential}, and adaptive selection of random-subspace dimension in nonlinear least squares \citep{bellavia2026variable}. We report an error bound for one sketched ridge estimate, conditional on the full data being fixed.

Our analysis assumes exact arithmetic. Floating-point stability is a separate implementation issue. Uncertainty-aware stopping rules and stability analyses for randomized least-squares solvers provide relevant tools for combining error estimation with reliable linear algebra \citep{pritchard2023uncertainty,epperly2024stable,xu2025refinement,jia2026numerical}.

\section{Related work}
\label{sec:related-work}

\subsection{Randomized algorithms for ridge regression and sketch-size selection}

\citet{lu2013faster} used structured random projections to accelerate ridge
regression. For regularized least-squares problems, sketch-size bounds can
depend on the statistical dimension rather than the matrix rank
\citep{avron2017sharper}. \citet{wang2017sketched} compared the classical
sketch and the Hessian sketch from optimization and statistical perspectives,
while \citet{chowdhury2018iterative,kacham2022sketching} studied iterative
algorithms and lower bounds. Subsequent work developed the RidgeSketch solver
\citep{gazagnadou2022ridgesketch}, sketched Krylov methods for computing a
regularization path \citep{wang2023krylov}, surrogate models for automatic
solver tuning \citep{cho2025surrogate}, sequential estimators with increasing
sketch sizes \citep{chen2025sequential}, and adaptive choices of random-subspace
dimension for nonlinear least squares \citep{bellavia2026variable}. This
literature focuses mainly on approximation accuracy, statistical risk,
computational cost, and convergence of iterative solvers. Our goal is
complementary: after a particular sketch has been drawn and the sketched ridge
problem has been solved, we estimate the Euclidean error in the coefficient
vector and use such error bounds to select a sketch size from a set of
candidates.

\subsection{Bootstrap error estimation and statistical inference under sketching}

The closest related method is the bootstrap of \citet{lopes2018least} for
randomized least squares. Their method resamples the rows of the compressed
design-response pair and uses the resulting bootstrap distribution to estimate
the error of a randomized solution without returning to the full data. Their
theory establishes bootstrap consistency and asymptotically valid coverage for
a general error norm when the problem size, sketch size, and number of
bootstrap replicates diverge. They also use the $m^{-1/2}$ error rate to
extrapolate an error estimate from a pilot sketch to larger sketch sizes. Thus,
row-wise resampling, error estimation after sketching,
asymptotic bootstrap validity, and bootstrap-based guidance for choosing the
sketch size are already present in \citet{lopes2018least}.

We extend this framework to ridge regression. We formulate the bootstrap using
the ridge normal equations, allow the normalized regularization parameter to
vary with the problem size, and derive the covariance matrix of the leading
error term. The covariance formula shows that, among sketches whose entries
are i.i.d. with mean zero and variance one, the Rademacher sketch has the
smallest first-order covariance matrix in the Loewner order. We also introduce a linearized
bootstrap and prove that, for any fixed bootstrap replicate, it is first-order
equivalent to refitting the ridge model. The Gaussian limit, bootstrap consistency,
and coverage results extend the least-squares theory of
\citet{lopes2018least} to ridge regression; they do not introduce a new
resampling principle.

We also address two issues not studied by \citet{lopes2018least}. First, when
only finitely many bootstrap replicates are available, a one-sided
order-statistic correction controls the probability that the adjusted error
bound falls below the exact conditional bootstrap quantile. Second, when
several candidate sketch sizes are considered, a Bonferroni correction accounts for multiple
comparisons. Unlike the $m^{-1/2}$ extrapolation of \citet{lopes2018least}, our
selection rule in \cref{eq:selection-rule} computes a separate
Bonferroni-adjusted error bound at each candidate size. If the error bound at
each candidate size has the required marginal coverage,
the rule asymptotically controls the probability of selecting a sketch size
whose coefficient error exceeds the prescribed tolerance.

Bootstrap methods for estimating error due to randomized computation have
also been developed for randomized matrix multiplication, sketched singular
value decomposition, randomized Newton methods, Oja's algorithm, random
Fourier features, and operator-norm errors in covariance estimation and
sketching
\citep{lopes2019matrix,lopes2020svd,chen2020newton,lunde2021oja,
yao2023rff,lopes2023operator}. Other work uses leave-one-out and jackknife
methods to estimate error and output variance in randomized matrix computations
\citep{epperly2024efficient}.

A separate literature studies statistical inference after sketching. Some
analyses account for both sampling variation in the data and randomness from
the sketch \citep{ahfock2021statistical,chi2022projector,lee2022heteroskedastic}.
Others condition on the full data and treat the sketch as the source of
randomness \citep{browne2024inference,wang2024quadratic,zhang2025framework}.
Our setting is of the second type. The data are fixed, and the quantity of
interest is the Euclidean coefficient error produced by one randomized
sketch. Online covariance estimation for sketched Newton methods is another approach to
quantifying uncertainty from randomized computation
\citep{kuang2026covariance,wang2026accelerated}.

\subsection{Scope of the theory}

The theory assumes that the coefficient dimension is fixed and that the rows
of the sketching matrix are i.i.d. These assumptions are important because
the bootstrap resamples the compressed design--response rows as independent
units drawn from a common distribution. The validity of the bootstrap can change when the coefficient
dimension grows proportionally with the sample size
\citep{clarte2024bootstrap}. The present theory also does not directly cover
structured transforms such as the subsampled randomized Hadamard transform
(SRHT) and CountSketch; extending the method to these structured sketches would require
resampling procedures that preserve their dependence structure. Finally, our
analysis assumes exact arithmetic and does not account for floating-point
error. It is therefore
complementary to forward- and backward-stability analyses of randomized
least-squares algorithms \citep{epperly2024stable,xu2025refinement}.

\section{Problem formulation and sketching methods}
\label{sec:method}

Throughout the paper, $\norm{\bm x}_2$ denotes the Euclidean norm of a
vector $\bm x$, while $\norm{M}_{\mathrm F}$ and $\norm{M}_{\op}$ denote
the Frobenius and operator norms of a matrix $M$, respectively, with
$\norm{M}_{\op}:=\sup_{\norm{\bm x}_2=1}\norm{M\bm x}_2$.
For an event $\mathcal A$, let $\mathbf1\{\mathcal A\}$ denote its
indicator. We write $I_k$ for the $k\times k$ identity matrix,
$\bm e_j\in\R^k$ for its $j$th column,
$\diag(\bm a)$ for the diagonal matrix with diagonal vector
$\bm a\in\R^k$, and $\bm a\odot\bm b$ for the componentwise product of
$\bm a,\bm b\in\R^k$. For a square matrix $N$, let $\tr(N)$ and
$\rank(N)$ denote its trace and rank. For symmetric matrices $N$ and
$L$, let $\lambda_{\min}(N)$ denote the smallest eigenvalue of $N$, and
write $N\succeq L$ when $\bm x^\top(N-L)\bm x\geq0$ for every vector
$\bm x$ of the appropriate dimension.
Finally, $\Pp$ denotes the underlying probability measure,
$\E\bm Y:=\int\bm Y\,d\Pp$ for an integrable random vector $\bm Y$, and
$\operatorname{Var}(\bm Y):=
\E[(\bm Y-\E\bm Y)(\bm Y-\E\bm Y)^\top]$
when $\bm Y$ is square-integrable.

\subsection{Ridge objective and sketched estimator}
\label{sec:normalization}
For each positive integer $n$, let $X_n\in\R^{n\times d}$ and
$\by_n\in\R^n$ be deterministic. The coefficient dimension $d$ is fixed in
the main theory. For a regularization parameter $\lambda_n>0$ that may depend
on $n$, define
\begin{equation}
  \bbeta_n
  :=
  \argmin_{\bbeta\in\R^d}
  \left\{
    \frac1n\norm{X_n\bbeta-\by_n}_2^2
    +
    \lambda_n\norm{\bbeta}_2^2
  \right\}.
  \label{eq:full-ridge}
\end{equation}
Write
\begin{equation}
  H_n
  :=
  \frac1nX_n^\top X_n,
  \qquad
  \bg_n
  :=
  \frac1nX_n^\top\by_n,
  \qquad
  A_n
  :=
  (H_n+\lambda_n I_d)^{-1}.
  \label{eq:full-moments}
\end{equation}
The normal equation for \cref{eq:full-ridge} gives
\[
  (H_n+\lambda_n I_d)\bbeta_n=\bg_n,
  \qquad
  \bbeta_n=A_n\bg_n.
\]

Because $\lambda_n>0$, the coefficient matrix in every full-data, sketched,
and bootstrap ridge normal equation below is positive definite. The limiting regularization parameter in
\cref{ass:regime} may equal zero.

We use the normalized criterion in \cref{eq:full-ridge} throughout.
An implementation based on the unnormalized criterion
$\norm{X_n\bbeta-\by_n}_2^2+\rho_n\norm{\bbeta}_2^2$
produces the same estimator after the conversion
$\lambda_n=\rho_n/n$. This conversion is used only to interpret the
archived benchmark settings reported in \appref{sec:source-experiments}.

\subsection{Independent-row sketches}
\label{sec:sketches}

For a positive integer $m$, let $\bs_{n,1},\ldots,\bs_{n,m}\in\R^n$
be independent and identically distributed random vectors. Let
$\mathcal S_{n,m}:=\sigma(\bs_{n,1},\ldots,\bs_{n,m})$ be the
$\sigma$-algebra generated by the sketch rows. For
$\mathcal A\in\mathcal S_{n,m}$ and every integrable
$\mathcal S_{n,m}$-measurable random variable $Z$, write
\[
  \PS(\mathcal A):=\Pp(\mathcal A),
  \qquad
  \ES Z:=\E Z.
\]
Their common distribution is isotropic:
\[
  \ES(\bs_{n,i}\bs_{n,i}^\top)=I_n,
  \qquad i=1,\ldots,m.
\]
Set
\begin{equation}
  S_{n,m}=\frac1{\sqrt m}
  \begin{bmatrix}
    \bs_{n,1}^\top\\[-1mm]
    \vdots\\[-1mm]
    \bs_{n,m}^\top
  \end{bmatrix}\in\R^{m\times n}.
  \label{eq:sketch-matrix}
\end{equation}
For each sketch row, define the corresponding compressed design vector and
response scalar by
\begin{equation}
  \bz_{n,i}:=\frac1{\sqrt n}X_n^\top\bs_{n,i}\in\R^d,
  \qquad
  u_{n,i}:=\frac1{\sqrt n}\by_n^\top\bs_{n,i}\in\R.
  \label{eq:compressed-rows}
\end{equation}
Then
\begin{equation}
  \widehat H_{n,m}:=\frac1m\sum_{i=1}^m\bz_{n,i}\bz_{n,i}^\top,
  \qquad
  \widehat{\bg}_{n,m}:=\frac1m\sum_{i=1}^m\bz_{n,i}u_{n,i},
  \label{eq:sketched-moments}
\end{equation}
satisfy
\[
  \ES(\widehat H_{n,m})=H_n,
  \qquad
  \ES(\widehat{\bg}_{n,m})=\bg_n,
\]
and the sketched estimator is
\begin{equation}
  \bbetahat_{n,m}
  :=(\widehat H_{n,m}+\lambda_n I_d)^{-1}\widehat{\bg}_{n,m}.
  \label{eq:sketched-ridge}
\end{equation}
Equivalently, \cref{eq:sketched-ridge} minimizes
$n^{-1}\norm{S_{n,m}(X_n\bbeta-\by_n)}_2^2+\lambda_n\norm{\bbeta}_2^2$.
The factor $1/n$ makes the sketched moments unbiased for the full-data moments.
In particular,
\begin{equation}
  \widehat H_{n,m}-H_n
  =\frac1m\sum_{i=1}^m
  \frac1nX_n^\top(\bs_{n,i}\bs_{n,i}^\top-I_n)X_n.
  \label{eq:sketched-gram-average}
\end{equation}
Thus, randomness enters through sample averages of fixed-dimensional
quantities, after which the ridge normal equations are solved.

The following three common sketching methods illustrate this setup.
\begin{description}
\item[Independent-entry projections.] The coordinates of $\bs_{n,i}$ are
i.i.d. with mean zero and variance one. Gaussian and Rademacher projections
are the main examples. Applying a dense projection can be computationally
expensive. The compressed design--response pairs are independent across
sketch rows, so the ordinary nonparametric bootstrap can resample these pairs
directly.
\item[Row sampling.] Let $J_i$ be a random index with
$\Pp(J_i=j)=p_{n,j}>0$ and set
$\bs_{n,i}=\bm e_{J_i}/\sqrt{p_{n,J_i}}$. Then
\cref{eq:compressed-rows} selects an original design row and its corresponding
response, with inverse-probability scaling. Uniform row sampling uses
$p_{n,j}=1/n$.
\item[Structured sketches.] SRHT and CountSketch can be faster to apply than
dense projections. Their rows are not generally independent within a single
sketch. A bootstrap method for these sketches must preserve this dependence
structure.
\end{description}

For $\alpha\in(0,1)$, define the exact $(1-\alpha)$-quantile of the coefficient error
\begin{equation}
  q_{n,m}(1-\alpha)
  :=\inf\left\{t\geq0:
  \PS\!\left(
  \norm{\bbetahat_{n,m}-\bbeta_n}_2\leq t
  \right)\geq1-\alpha\right\}.
  \label{eq:coverage-target}
\end{equation}
Our goal is to estimate \cref{eq:coverage-target} from one realized sketch
and report the sketched coefficient vector together with an estimated error
bound.

\section{Bootstrap error estimation and sketch-size selection}
\label{sec:bootstrap-calibration}

\subsection{Bootstrap of compressed design-response pairs with ridge refitting}
\label{sec:exact-bootstrap}

Each sketch row produces one compressed design-response pair
$(\bz_{n,i},u_{n,i})$. The bootstrap resamples these $m$ pairs with
replacement, recomputes the two sketched moments, and solves the ridge problem
again. Thus, every bootstrap replicate requires a new ridge fit.
Coverage is established asymptotically in \cref{thm:exact-coverage}.
Recall $\mathcal S_{n,m}$ from \cref{sec:sketches}. For every event
$\mathcal A$ and every integrable random variable $Z$ involving both sketch
and bootstrap randomness, define
\[
  \Pstar(\mathcal A):=\Pp(\mathcal A\mid\mathcal S_{n,m}),
  \qquad
  \Estar Z:=\E(Z\mid\mathcal S_{n,m}).
\]

For a positive integer $B$, conditional on the observed compressed rows, draw
$B$ independent multinomial weight vectors
\begin{equation}
  \bw_b=(w_{b,1},\ldots,w_{b,m})^\top
  \sim\operatorname{Multinomial}
  \left(m;\frac1m,\ldots,\frac1m\right),
  \qquad b=1,\ldots,B.
  \label{eq:multinomial-weights}
\end{equation}
Define the weighted moments
\begin{equation}
  \widehat H_{n,m,b}^{*}:=\frac1m\sum_{i=1}^m w_{b,i}\bz_{n,i}\bz_{n,i}^\top,
  \qquad
  \widehat{\bg}_{n,m,b}^{*}:=\frac1m\sum_{i=1}^m w_{b,i}\bz_{n,i}u_{n,i},
  \label{eq:weighted-moments}
\end{equation}
and the bootstrap estimator obtained by refitting
\begin{equation}
  \bbetastar_{n,m,b}:=(\widehat H_{n,m,b}^{*}+\lambda_n I_d)^{-1}
  \widehat{\bg}_{n,m,b}^{*}.
  \label{eq:exact-bootstrap-estimator}
\end{equation}
For a distribution function $F$, define its $p$-quantile by the generalized
inverse $F^{-1}(p):=\inf\{t:F(t)\geq p\}$. Define the bootstrap errors and
their empirical $(1-\alpha)$-quantile by
\begin{equation}
  e_{n,m,b}^{*}:=\norm{\bbetastar_{n,m,b}-\bbetahat_{n,m}}_2,
  \qquad
  \widehat\epsilon_{n,m,B}(\alpha)
  :=\operatorname{quantile}_{1-\alpha}
  \{e_{n,m,1}^{*},\ldots,e_{n,m,B}^{*}\}.
  \label{eq:exact-bootstrap-threshold}
\end{equation}
Here, $\operatorname{quantile}_p\{\cdots\}$ denotes the empirical
$p$-quantile of the listed values under the generalized-inverse convention.
We use $\widehat\epsilon_{n,m,B}(\alpha)$ as the empirical bootstrap error
bound.

\begin{algorithm}[H]
\caption{Bootstrap of compressed design-response pairs with ridge refitting}
\label{alg:exact-bootstrap}
\noindent\textbf{Input:} $X_n,\by_n,m,\lambda_n,\alpha,B$.
\begin{enumerate}[label=\arabic*.,leftmargin=*,itemsep=2pt,topsep=3pt]
\item Generate $S_{n,m}$ and compute the compressed design--response pairs
$(\bz_{n,i},u_{n,i})_{i=1}^m$.
\item Compute $\bbetahat_{n,m}$ from \cref{eq:sketched-ridge}.
\item For $b=1,\ldots,B$:
  \begin{enumerate}[label=(\alph*),leftmargin=2em,itemsep=1pt,topsep=1pt]
  \item Draw $\bw_b$ from \cref{eq:multinomial-weights}.
  \item Compute $\bbetastar_{n,m,b}$ from
  \cref{eq:weighted-moments,eq:exact-bootstrap-estimator}.
  \item Set
  $e_{n,m,b}^{*}=\norm{\bbetastar_{n,m,b}-\bbetahat_{n,m}}_2$.
  \end{enumerate}
\end{enumerate}
\noindent\textbf{Output:}
$\bigl(\bbetahat_{n,m},\widehat\epsilon_{n,m,B}(\alpha)\bigr)$.
\end{algorithm}

After compression, the method uses only $m$ design--response pairs. A direct
implementation costs $O(md^2+d^3)$ for the initial fit and
$O\{B(md^2+d^3)\}$ for the bootstrap fits, excluding the cost of applying the
sketch.

\subsection{Linearized bootstrap}
\label{sec:linearized-bootstrap}

Let
\begin{equation}
  \widehat A_{n,m}:=(\widehat H_{n,m}+\lambda_n I_d)^{-1},
  \qquad
  \widehat r_{n,m,i}:=u_{n,i}-\bz_{n,i}^\top\bbetahat_{n,m}.
  \label{eq:sketched-residuals}
\end{equation}
We linearize the map
$(H,\bm g)\mapsto(H+\lambda_n I_d)^{-1}\bm g$ at the observed sketched
moments $(\widehat H_{n,m},\widehat{\bg}_{n,m})$. For a perturbation
$(V,\bm v)\in\R^{d\times d}\times\R^d$, the derivative at this point is
\begin{equation}
  (V,\bm v)\longmapsto
  \widehat A_{n,m}(\bm v-V\bbetahat_{n,m}).
  \label{eq:realized-derivative}
\end{equation}
The bootstrap perturbations are
\begin{equation}
\begin{aligned}
  \widehat H_{n,m,b}^{*}-\widehat H_{n,m}
  &=\frac1m\sum_{i=1}^m(w_{b,i}-1)\bz_{n,i}\bz_{n,i}^\top,\\
  \widehat{\bg}_{n,m,b}^{*}-\widehat{\bg}_{n,m}
  &=\frac1m\sum_{i=1}^m(w_{b,i}-1)\bz_{n,i}u_{n,i}.
\end{aligned}
  \label{eq:bootstrap-moment-perturbations}
\end{equation}
Substitution of \cref{eq:bootstrap-moment-perturbations} into the derivative in \cref{eq:realized-derivative} gives
\begin{align}
  \widetilde\Delta_{n,m,b}^{*}
  &:=\widehat A_{n,m}\left\{
  (\widehat{\bg}_{n,m,b}^{*}-\widehat{\bg}_{n,m})
  -(\widehat H_{n,m,b}^{*}-\widehat H_{n,m})\bbetahat_{n,m}
  \right\}\notag\\
  &=\widehat A_{n,m}\frac1m\sum_{i=1}^m
  (w_{b,i}-1)\bz_{n,i}\widehat r_{n,m,i}.
  \label{eq:linearized-delta}
\end{align}
Thus $\widetilde\Delta_{n,m,b}^{*}$ is the first-order approximation to
$\bbetastar_{n,m,b}-\bbetahat_{n,m}$, the bootstrap perturbation obtained by
refitting after the observed compressed rows are reweighted. The corresponding
empirical influence vectors are
\begin{equation}
  \widehat{\bm\psi}_{n,m,i}
  :=\widehat A_{n,m}
  \left\{\bz_{n,i}\widehat r_{n,m,i}
  -\lambda_n\bbetahat_{n,m}\right\},
  \qquad i=1,\ldots,m.
  \label{eq:empirical-influence}
\end{equation}
The sketched normal equation gives
$m^{-1}\sum_i\widehat{\bm\psi}_{n,m,i}=\bzero$. Because
$\sum_i(w_{b,i}-1)=0$, \cref{eq:linearized-delta} can also be written as
\begin{equation}
  \widetilde\Delta_{n,m,b}^{*}
  =\frac1m\sum_{i=1}^m(w_{b,i}-1)\widehat{\bm\psi}_{n,m,i}.
  \label{eq:linearized-as-influence}
\end{equation}
Define
\begin{equation}
  \widetilde e_{n,m,b}^{*}:=\norm{\widetilde\Delta_{n,m,b}^{*}}_2,
  \qquad
  \widetilde\epsilon_{n,m,B}(\alpha)
  :=\operatorname{quantile}_{1-\alpha}
  \{\widetilde e_{n,m,1}^{*},\ldots,\widetilde e_{n,m,B}^{*}\}.
  \label{eq:linearized-error}
\end{equation}
Then $\widetilde e_{n,m,b}^{*}$ approximates the bootstrap error obtained by
refitting, $e_{n,m,b}^{*}$. The quantity
$\widetilde\epsilon_{n,m,B}(\alpha)$ is the corresponding empirical error
bound from the linearized bootstrap. The $d\times m$ matrix with
columns $\widehat{\bm\psi}_{n,m,1},\ldots,\widehat{\bm\psi}_{n,m,m}$
can be formed once. All $B$ replicates can then be computed by matrix
multiplication, at an additional cost of $O(Bmd)$.

\subsection{Correcting Monte Carlo error from finitely many bootstrap replicates}
\label{sec:mc-protection}

Conditional on the observed sketch, let $p:=1-\alpha$ and define the exact
conditional bootstrap quantile
\[
  q_{n,m}^{*}(p)
  :=\inf\left\{t\geq0:
  \Pstar\!\left(e_{n,m,1}^{*}\leq t\right)\geq p\right\}.
\]
This is the $p$-quantile that would be available if the conditional bootstrap
distribution were known exactly. The empirical bootstrap quantile in
\cref{eq:exact-bootstrap-threshold} estimates $q_{n,m}^{*}(p)$ from only $B$
replicates and can therefore be too small because of Monte Carlo variation in
the upper tail.

To control the probability of underestimating this quantile, we replace
the unadjusted empirical quantile by a suitably high order statistic.
Fix in advance the number of replicates $B$ and an underestimation probability
$\delta\in(0,1)$, and order the observed bootstrap errors as
$e_{n,m,(1)}^{*}\leq\cdots\leq e_{n,m,(B)}^{*}$. By the definition of
$q_{n,m}^{*}(p)$, the conditional probability that one bootstrap error is
strictly below this exact quantile is at most $p$. Hence the number of
bootstrap errors that fall strictly below $q_{n,m}^{*}(p)$ is stochastically no larger
than a binomial random variable with parameters $B$ and $p$.  We therefore
choose the smallest order-statistic rank
\begin{equation}
  k_B(p,\delta)
  :=\min\left\{k\in\{1,\ldots,B\}:
  \Pp\{\operatorname{Bin}(B,p)\leq k-1\}\geq1-\delta\right\},
  \label{eq:mc-order}
\end{equation}
where $\operatorname{Bin}(B,p)$ denotes a binomial random variable. If the
set in \cref{eq:mc-order} is empty, put $k_B(p,\delta):=\infty$. The
error bound adjusted by the order-statistic correction is
\begin{equation}
  \widehat\epsilon_{n,m,B}^{\mathrm{MC}}(\alpha,\delta)
  :=
  \begin{cases}
    e_{n,m,(k_B(p,\delta))}^{*},&k_B(p,\delta)<\infty,\\
    +\infty,&k_B(p,\delta)=\infty.
  \end{cases}
  \label{eq:mc-threshold}
\end{equation}
Thus \cref{eq:mc-threshold} replaces the unadjusted empirical quantile by a
sufficiently high order statistic. Whenever $k_B(p,\delta)<\infty$, the
adjusted error bound is no smaller than the exact conditional bootstrap
quantile $q_{n,m}^{*}(p)$ with conditional probability at least $1-\delta$.
This is a distribution-free upper tolerance bound for the exact conditional
bootstrap quantile; see \Cref{prop:mc-protected} and
\citet{wilks1941determination}. The adjustment limits the conditional
probability of Monte Carlo underestimation to $\delta$. It does not account
for the difference between the bootstrap distribution and the
coefficient-error distribution at a finite sketch size. If no order statistic
provides the requested underestimation probability, $+\infty$ indicates that
no finite-valued error bound with this distribution-free guarantee is
available. A finite rank in \cref{eq:mc-order} exists exactly when
$p^B\leq\delta$.

\subsection{Selecting a sketch size from a finite grid to meet an error tolerance}
\label{sec:certified-selection}

Suppose that a practitioner considers a prespecified increasing grid of
candidate sketch sizes $m_1<\cdots<m_K$. For candidate $j$, let $B_j$ be the
number of bootstrap replicates, let $\bbetahat_{n,m_j}$ be the corresponding
sketched estimator, and let $\widehat\epsilon_{n,m_j,B_j}(\gamma)$ be the
empirical bootstrap error bound in \cref{eq:exact-bootstrap-threshold} with
nominal noncoverage probability $\gamma$. Candidate $j$ satisfies the
selection criterion for a prescribed tolerance $\tau>0$ when its error bound
is at most $\tau$.

To account for comparing $K$ candidate sketch sizes, we use the Bonferroni
correction and set the nominal noncoverage probability to $\alpha/K$ at each
size. Candidate $j$ therefore satisfies the selection criterion when
$\widehat\epsilon_{n,m_j,B_j}(\alpha/K)\leq\tau$. We select the smallest
candidate satisfying this condition:
\begin{equation}
  \widehat j
  :=\min\left\{j\in\{1,\ldots,K\}:
  \widehat\epsilon_{n,m_j,B_j}(\alpha/K)\leq\tau\right\},
  \label{eq:selection-rule}
\end{equation}
when this set is nonempty. If the set is empty, put $\widehat j:=0$; this
means that no candidate satisfies the selection criterion and no error
guarantee is reported.
The candidate sketches may be independent, or they may be nested prefixes of
one larger sketch with i.i.d. rows. If the error bound at every candidate size
has the required marginal coverage, division of $\alpha$ by $K$ gives the
asymptotic familywise error control in \Cref{thm:simultaneous-selection}.
Independence among the candidates is not required.

Computing \cref{eq:selection-rule} requires a separate bootstrap error bound at
every candidate size. A cheaper preliminary estimate can be obtained from a
single pilot size $m_0$ by using the approximate $m^{-1/2}$ error scaling.
Specifically,
if
\[
  \widehat\epsilon_{n,m,B}(\alpha)
  \approx
  \widehat\epsilon_{n,m_0,B}(\alpha)\sqrt{\frac{m_0}{m}},
\]
then solving the approximate requirement
$\widehat\epsilon_{n,m,B}(\alpha)\leq\tau$ for $m$ gives
\begin{equation}
  \widehat m_{\mathrm{pilot}}
  :=\left\lceil m_0
  \left\{\frac{\widehat\epsilon_{n,m_0,B}(\alpha)}{\tau}\right\}^2
  \right\rceil.
  \label{eq:adaptive-m}
\end{equation}
\Cref{eq:adaptive-m} is therefore an extrapolated sketch-size
estimate based on one pilot run and the $m^{-1/2}$ approximation.
The familywise error-control result in \Cref{thm:simultaneous-selection}
applies only to the finite-grid rule in \cref{eq:selection-rule}; it does not
apply to the pilot-based extrapolation in \cref{eq:adaptive-m}.

\section{Asymptotic theory}
\label{sec:theory}

Every asymptotic statement is taken along one sequence indexed by $n\to\infty$.  The data $(X_n,\by_n)$ are deterministic, $d$ is fixed, and the sketch size is $m_n\to\infty$.  There is no separate limit with $n$ fixed and $m\to\infty$, and no rate restriction between $m_n$ and $n$ is imposed beyond the assumptions below.  A sequence $B_n\to\infty$ is required only when an empirical bootstrap distribution or empirical bootstrap quantile is used.

Recall the definitions of $\PS$, $\ES$, and $\mathcal S_{n,m}$ from
\cref{sec:sketches}.  At sketch size $m_n$, define
\[
  \mathcal S_n:=\mathcal S_{n,m_n},
  \qquad
  \Pstar(\mathcal A):=\Pp(\mathcal A\mid\mathcal S_n),
  \qquad
  \Estar Z:=\E(Z\mid\mathcal S_n).
\]
In coverage statements involving $B_n$ bootstrap draws, $\Pp$ denotes
the joint law of the sketch and those draws.  We use $O_p^*$ and $o_p^*$
for stochastic orders under the conditional bootstrap law, and we use
conditional weak convergence in $\PS$-probability. Precise definitions are
given in \appref{app:proofs}; all conditional statements condition on the
observed sketch $\mathcal S_n$.
At sketch size $m_n$,
\begin{equation}
\begin{aligned}
  \widehat H_{n,m_n}-H_n
  &=\frac1{m_n}\sum_{i=1}^{m_n}
    (\bz_{n,i}\bz_{n,i}^{\top}-H_n),\\
  \widehat\bg_{n,m_n}-\bg_n
  &=\frac1{m_n}\sum_{i=1}^{m_n}
    (\bz_{n,i}u_{n,i}-\bg_n).
\end{aligned}
\label{eq:moment-vector}
\end{equation}

\begin{assumption}[Asymptotic regime and fixed-design limits]
\label{ass:regime}
The regularization parameters satisfy $\lambda_n>0$ for every $n$ and,
as $n\to\infty$,
\[
  \lambda_n\to\lambda_\infty\geq0,
  \qquad H_n\to H,
  \qquad \bg_n\to\bg,
\]
and there exists $c_0>0$ such that
$\lambda_{\min}(H_n+\lambda_n I_d)\geq c_0$ for all sufficiently large $n$.
Consequently,
$A_n\to A:=(H+\lambda_\infty I_d)^{-1}$ and
$\bbeta_n\to\bbeta:=A\bg$.
\end{assumption}

Define the mean-zero influence vector for compressed row $i$ by
\begin{equation}
  \bm\psi_{n,i}
  :=A_n\{\bz_{n,i}(u_{n,i}-\bz_{n,i}^{\top}\bbeta_n)
       -\lambda_n\bbeta_n\}, 
  \label{eq:derivative-map}
\end{equation}
where $\br_n:=\by_n-X_n\bbeta_n$. 
Using \cref{eq:compressed-rows}, the expression in braces can be written in
terms of the original sketch row:
\begin{align}
  \bz_{n,i}(u_{n,i}-\bz_{n,i}^{\top}\bbeta_n)-\lambda_n\bbeta_n
  &=\frac1nX_n^\top\bs_{n,i}\bs_{n,i}^\top\br_n
    -\frac1nX_n^\top\br_n\notag\\
  &=\frac1nX_n^\top(\bs_{n,i}\bs_{n,i}^\top-I_n)\br_n.
  \label{eq:influence-identity}
\end{align}
The ridge normal equation gives
$n^{-1}X_n^\top\br_n=\lambda_n\bbeta_n$. Therefore,
\begin{equation}
  \bm\psi_{n,i}
  =A_n\frac1nX_n^\top
   (\bs_{n,i}\bs_{n,i}^{\top}-I_n)\br_n,
  \label{eq:single-influence}
\end{equation}
and $\ES\bm\psi_{n,i}=\bzero$.

\begin{assumption}[Sketch-row regularity]
\label{ass:sketch}
For every $n$, $\bs_{n,1},\ldots,\bs_{n,m_n}$ are i.i.d. and isotropic:
$\ES(\bs_{n,1}\bs_{n,1}^{\top})=I_n$.  Moreover,
\begin{equation}
  \sup_n\ES\left[
    \norm{\bz_{n,1}\bz_{n,1}^{\top}-H_n}_{\mathrm F}^{4}
    +\norm{\bz_{n,1}u_{n,1}-\bg_n}_2^{4}
  \right]<\infty.
  \label{eq:design-regularity}
\end{equation}
Finally, assume that there exists a finite matrix $\Omega\in\R^{d\times d}$ such that
\begin{equation}
  \Omega_n
  :=\ES\left[
  (\bm\psi_{n,1}-\ES\bm\psi_{n,1})
  (\bm\psi_{n,1}-\ES\bm\psi_{n,1})^\top
  \right]
  =\ES(\bm\psi_{n,1}\bm\psi_{n,1}^\top)
  \longrightarrow\Omega,
  \label{eq:omega}
\end{equation}
where the second equality uses $\ES\bm\psi_{n,1}=\bzero$.
\end{assumption}

\begin{assumption}[Nondegeneracy]
\label{ass:design}
$\rank(\Omega)\geq1$.
\end{assumption}

\begin{lemma}[Sufficient conditions for sketch-row regularity]
\label{lem:concrete-conditions}
Under \cref{ass:regime}, suppose
$\bs_{n,1},\ldots,\bs_{n,m_n}$ are independent and that the coordinates of
each row are i.i.d. copies of a scalar random variable $s$ satisfying
$\E s=0$, $\E s^2=1$, and $\E|s|^8<\infty$. Assume that
$n^{-1}\by_n^{\top}\by_n$ converges to a finite limit and that, for every
fixed symmetric $C\in\R^{d\times d}$ and every fixed $\bm c\in\R^d$,
\begin{equation}
  \frac1{n^2}\sum_{j=1}^n
  \left[\bm e_j^{\top}
  \{X_nCX_n^{\top}+X_n\bm c\,\by_n^{\top}\}
  \bm e_j\right]^2
  \label{eq:diagonal-condition}
\end{equation}
converges to a finite limit.  Then \cref{ass:sketch} holds.
\end{lemma}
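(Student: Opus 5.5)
The plan is to check the three parts of \cref{ass:sketch} in turn: isotropy, the uniform fourth-moment bound \cref{eq:design-regularity}, and convergence of $\Omega_n$. Isotropy is immediate, since the coordinates of $\bs_{n,1}$ are independent with $\E s=0$ and $\E s^2=1$. For the remaining parts, write $\bm x_{n,k}\in\R^n$ for the $k$th column of $X_n$. The basic observation is that every coordinate of $\bz_{n,1}\bz_{n,1}^\top-H_n$ and of $\bz_{n,1}u_{n,1}-\bg_n$ is a centred quadratic form $\bs^\top M\bs-\tr M$. Here $\bs:=\bs_{n,1}$, and $M$ is either $n^{-1}\bm x_{n,k}\bm x_{n,l}^\top$ or $n^{-1}\bm x_{n,k}\by_n^\top$.

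\textbf{Fourth moments.} I will use the standard moment inequality for quadratic forms in i.i.d. centred entries (Bai--Silverstein, Lemma B.26):
\[
\E|\bs^\top M\bs-\tr M|^4\le C\{(\E s^4)^2(\tr MM^\top)^2+\E s^8\,\tr (MM^\top)^2\}.
\]
It is at this point that $\E|s|^8<\infty$ is used. Since $\tr(MM^\top)^2\le(\tr MM^\top)^2=\norm{M}_{\mathrm F}^4$, the right-hand side is at most $C'\norm{M}_{\mathrm F}^4$.

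For $M=n^{-1}\bm x_{n,k}\bm x_{n,l}^\top$ we have $\norm{M}_{\mathrm F}^2=(H_n)_{kk}(H_n)_{ll}$, which is bounded because $H_n\to H$. For $M=n^{-1}\bm x_{n,k}\by_n^\top$ we have $\norm{M}_{\mathrm F}^2=(H_n)_{kk}\,n^{-1}\by_n^\top\by_n$, which is bounded by the assumed convergence of $n^{-1}\by_n^\top\by_n$. Because $d$ is fixed, summing over the finitely many coordinates and using $(\sum a_i)^2\le N\sum a_i^2$ gives the supremum bound in \cref{eq:design-regularity}.

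\textbf{Covariance.} By \cref{eq:single-influence}, $\bm c^\top A_n^{-1}\bm\psi_{n,1}=\bs^\top P\bs-\tr P$ with $P=n^{-1}X_n\bm c\,\br_n^\top$. For i.i.d. entries with $\mu_4:=\E s^4$ one has the exact identity
\[
\operatorname{Cov}(\bs^\top P\bs,\bs^\top Q\bs)=\tr\{P(Q+Q^\top)\}+(\mu_4-3)\textstyle\sum_j P_{jj}Q_{jj}.
\]
Applying it gives
\[
\Omega_n=A_n\Bigl\{\tfrac{\norm{\br_n}_2^2}{n}H_n+\lambda_n^2\bbeta_n\bbeta_n^\top+(\mu_4-3)\tfrac1{n^2}\textstyle\sum_j r_{n,j}^2\,\bm x_{n(j)}\bm x_{n(j)}^\top\Bigr\}A_n,
\]
where $\bm x_{n(j)}^\top$ is the $j$th row of $X_n$ and $r_{n,j}$ is the $j$th entry of $\br_n$. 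The second term uses $n^{-1}X_n^\top\br_n=\lambda_n\bbeta_n$.

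The first two terms converge by \cref{ass:regime}. In particular,
\[
n^{-1}\norm{\br_n}_2^2=n^{-1}\by_n^\top\by_n-2\bbeta_n^\top\bg_n+\bbeta_n^\top H_n\bbeta_n
\]
converges.

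\textbf{Main obstacle: the fourth-cumulant term.} It remains to show that $T_n(\bm a):=n^{-2}\sum_j(\bm x_{n(j)}^\top\bm a)^2(y_{n,j}-\bm x_{n(j)}^\top\bbeta_n)^2$ converges for each $\bm a$; polarization in $\bm a$ then gives the matrix limit. The quantity in \cref{eq:diagonal-condition} is
\[
D_n(C,\bm c)=n^{-2}\textstyle\sum_j(\bm x_{n(j)}^\top C\bm x_{n(j)}+y_{n,j}\bm x_{n(j)}^\top\bm c)^2.
\]
This is a polynomial of fixed degree in the finitely many entries of $(C,\bm c)$, and it converges pointwise. Pointwise convergence on a finite-dimensional parameter space forces its coefficients to converge; this can be seen by evaluating at finitely many interpolation points and inverting a fixed Vandermonde-type system.

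The coefficients are exactly the sums that appear after expanding $T_n(\bm a)$. This is seen by taking $C=\bm a\bm b^\top+\bm b\bm a^\top$ and $\bm c=\bm a$:
\begin{itemize}
\item $n^{-2}\sum_j y_{n,j}^2(\bm x_{n(j)}^\top\bm a)^2$ comes from the $\bm c\bm c$ coefficients;
\item the cross term $n^{-2}\sum_j y_{n,j}(\bm x_{n(j)}^\top\bm a)^2(\bm x_{n(j)}^\top\bm b)$ comes from the $C\bm c$ coefficients;
\item the quartic term comes from the $CC$ coefficients.
\end{itemize}
Hence $T_n(\bm a)$ is a polynomial in $\bbeta_n$ whose coefficients converge. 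Since $\bbeta_n\to\bbeta$, $T_n(\bm a)$ converges, and together with $A_n\to A$ this yields $\Omega_n\to\Omega$.

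I expect this coefficient-extraction step, which handles the $n$-dependent $\bbeta_n$ inside the residual, to be the only delicate point. The rest is routine moment algebra.
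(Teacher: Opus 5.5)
Your proof is correct, and it reaches the conclusion by a somewhat different route from the paper's.

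\textbf{Fourth moments.} This step is essentially the paper's: the same Bai--Silverstein bound, which is where $\E|s|^8<\infty$ enters. The paper applies it to the symmetric matrix $K$ of an arbitrary linear combination and controls $n^{-2}\tr K^2$ by an explicit trace expansion. You apply it coordinatewise to rank-one matrices, which needs only $\sup_n\norm{H_n}_{\op}<\infty$ and $\sup_n n^{-1}\by_n^\top\by_n<\infty$.

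\textbf{Covariance.} Here the two arguments are organized differently. The paper never expands the residual. It first shows that the covariance of the centred moment pair $(\bz_{n,1}\bz_{n,1}^\top-H_n,\ \bz_{n,1}u_{n,1}-\bg_n)$ converges. The variance of the projection $\tr\{C(\bz_{n,1}\bz_{n,1}^\top-H_n)\}+\bm c^\top(\bz_{n,1}u_{n,1}-\bg_n)$ equals $2n^{-2}\tr(K^2)+\kappa_s n^{-2}\sum_jK_{jj}^2$. Its second term is literally the quantity in \cref{eq:diagonal-condition}. Its first term is an explicit continuous function of $H_n$, $\bg_n$ and $n^{-1}\by_n^\top\by_n$. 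Polarization then gives all cross-covariances. Finally, $\Omega_n\to\Omega$ holds because $\bm\psi_{n,1}=A_n\{(\bz_{n,1}u_{n,1}-\bg_n)-(\bz_{n,1}\bz_{n,1}^\top-H_n)\bbeta_n\}$ is a linear image of that pair under a map that converges with $A_n$ and $\bbeta_n$. In this organization, the $n$-dependence of $\bbeta_n$ inside $\br_n$, which you single out as the delicate point, is absorbed into the converging linear map and needs no separate treatment.

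Your route starts from the explicit formula \cref{eq:gamma}. You therefore have to extract the three families of sums from $D_n(C,\bm c)$ before substituting $\bbeta_n$. That extraction is the same polarization idea, applied one level later.

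\textbf{What each buys.} Your version identifies the limit explicitly: $\Omega=A\{\rho H+\lambda_\infty^2\bbeta\bbeta^\top+\kappa_s\Gamma\}A$, with $\rho=\lim_n n^{-1}\norm{\br_n}_2^2$ and $\Gamma=\lim_n n^{-2}X_n^\top\diag(\br_n\odot\br_n)X_n$. It also shows exactly which sums in \cref{eq:diagonal-condition} are actually used. The paper's version is shorter, and it yields the stronger intermediate fact that the full covariance of the compressed moments converges.
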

A proof is given in \appref{app:sufficient-conditions}.

The results are ordered by their logical dependencies. \Cref{prop:covariance}
gives the asymptotic linear expansion and its covariance, the following remark
compares sketches with i.i.d. standardized entries, and
\cref{prop:linearized-equivalence} shows that the linearized bootstrap
approximates the bootstrap with refitting.
\Cref{thm:bootstrap-consistency,cor:bootstrap-distribution-consistency,thm:exact-coverage}
then establish Gaussian limits, consistency of the bootstrap distribution, and
coverage for an error bound computed from the observed sketch. The
final results address Monte Carlo error from finitely many bootstrap replicates
and selection among finitely many sketch sizes.

\subsection{Linearization, covariance, and the linearized bootstrap}
\label{sec:covariance}

\begin{proposition}[Asymptotic linear representation and covariance]
\label{prop:covariance}
Under \cref{ass:regime,ass:sketch},
\begin{equation}
  \sqrt{m_n}(\bbetahat_{n,m_n}-\bbeta_n)
  =\frac1{\sqrt{m_n}}\sum_{i=1}^{m_n}\bm\psi_{n,i}+o_p(1).
  \label{eq:target-linearization}
\end{equation}
The covariance matrix of $\bm\psi_{n,i}$ is $\Omega_n$. Because the sketch
rows are i.i.d., the leading sum
$m_n^{-1/2}\sum_{i=1}^{m_n}\bm\psi_{n,i}$ also has covariance matrix
$\Omega_n$. By \cref{ass:sketch}, $\Omega_n\to\Omega$.
If the coordinates of $\bs_{n,i}$ are independent copies of a scalar $s$ satisfying
\begin{equation}
  \E s=0,\qquad \E s^2=1,\qquad \E|s|^4<\infty,
  \label{eq:sketch-moments}
\end{equation}
then, with $\kappa_s:=\E s^4-3$,
\begin{equation}
  \Omega_n
  =A_n\left\{
  \frac{\norm{\br_n}_2^2}{n}H_n
  +\lambda_n^2\bbeta_n\bbeta_n^{\top}
  +\frac{\kappa_s}{n^2}
  X_n^{\top}\diag(\br_n\odot\br_n)X_n
  \right\}A_n^{\top}.
  \label{eq:gamma}
\end{equation}
\end{proposition}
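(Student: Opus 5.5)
The proof has two independent parts: the asymptotic linear representation \cref{eq:target-linearization}, and the explicit covariance formula \cref{eq:gamma}.

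\emph{Linearization.} Write $\widehat A:=(\widehat H_{n,m_n}+\lambda_nI_d)^{-1}$. Subtracting the full normal equation $(H_n+\lambda_nI_d)\bbeta_n=\bg_n$ from the sketched one gives the exact identity
\[
  (\widehat H_{n,m_n}+\lambda_nI_d)(\bbetahat_{n,m_n}-\bbeta_n)
  =(\widehat\bg_{n,m_n}-\bg_n)-(\widehat H_{n,m_n}-H_n)\bbeta_n .
\]
By \cref{eq:influence-identity}, the right-hand side equals $m_n^{-1}\sum_i A_n^{-1}\bm\psi_{n,i}$. Hence
\[
  \sqrt{m_n}(\bbetahat_{n,m_n}-\bbeta_n)=\widehat A A_n^{-1}\,m_n^{-1/2}\sum_i\bm\psi_{n,i}.
\]
It then suffices to show two things.
\begin{enumerate}[label=(\roman*)]
  \item $m_n^{-1/2}\sum_i\bm\psi_{n,i}=O_p(1)$. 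This follows from Chebyshev, since its covariance is $\Omega_n$, which is bounded because $\Omega_n\to\Omega$ (or directly from the fourth-moment bound \cref{eq:design-regularity} together with boundedness of $A_n$ and $\bbeta_n$).
  \item $\widehat AA_n^{-1}-I_d=o_p(1)$. By \cref{eq:moment-vector}, $\widehat H_{n,m_n}-H_n$ is an average of i.i.d. mean-zero matrices whose second moments are uniformly bounded by \cref{eq:design-regularity}. So $\ES\norm{\widehat H_{n,m_n}-H_n}_{\mathrm F}^2=O(1/m_n)$, and the difference is $O_p(m_n^{-1/2})$. Since $\lambda_{\min}(H_n+\lambda_nI_d)\ge c_0$ by \cref{ass:regime}, on the event $\norm{\widehat H_{n,m_n}-H_n}_{\op}\le c_0/2$ (whose probability tends to one) we have $\norm{\widehat A}_{\op}\le 2/c_0$. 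The resolvent identity $\widehat A-A_n=-\widehat A(\widehat H_{n,m_n}-H_n)A_n$ then gives $\widehat AA_n^{-1}-I_d=O_p(m_n^{-1/2})$.
\end{enumerate}
Combining (i) and (ii) yields \cref{eq:target-linearization}. The main delicacy is keeping every bound uniform in $n$, which is why the supremum in \cref{eq:design-regularity} and the uniform lower eigenvalue bound are used.

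\emph{Covariance statements.} The claim that $\operatorname{Var}(\bm\psi_{n,1})=\Omega_n$ is the definition in \cref{eq:omega}. Independence and the mean-zero property make the covariance of $m_n^{-1/2}\sum_i\bm\psi_{n,i}$ equal to $m_n^{-1}\cdot m_n\Omega_n=\Omega_n$.

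\emph{Formula \cref{eq:gamma}.} Write $\bs=\bs_{n,1}$, $X=X_n$, $\br=\br_n$. By \cref{eq:single-influence},
\[
  \Omega_n=A_n n^{-2}X^\top\,\E\bigl[(\bs\bs^\top-I)\br\br^\top(\bs\bs^\top-I)\bigr]X A_n .
\]
Expanding, $\E[(\bs\bs^\top-I)\br\br^\top(\bs\bs^\top-I)]=\E[\bs\bs^\top\br\br^\top\bs\bs^\top]-\br\br^\top$. The $(j,k)$ entry of the first term is $\sum_{a,b}r_ar_b\E[s_js_as_bs_k]$. With independent standardized coordinates, the only nonzero moments are:
\begin{itemize}
  \item $j=k$, $a=b$: these give $\norm{\br}_2^2$ on the diagonal plus $(\E s^4-1)r_j^2$ for the term with $a=b=j$;
  \item $j\ne k$, $\{a,b\}=\{j,k\}$: these give $2r_jr_k$.
\end{itemize}
Hence the first term equals $\norm{\br}_2^2I+2\br\br^\top+(\E s^4-3)\diag(\br\odot\br)$. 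Subtracting $\br\br^\top$ leaves
\[
  \norm{\br}_2^2I+\br\br^\top+\kappa_s\diag(\br\odot\br).
\]
Substituting into the sandwich, and using $n^{-2}X^\top X\,\norm{\br}^2=(\norm{\br}^2/n)H_n$ together with $n^{-1}X^\top\br=\lambda_n\bbeta_n$, gives exactly \cref{eq:gamma}. The fourth-moment condition \cref{eq:sketch-moments} guarantees that all these moments are finite. I expect this index bookkeeping, especially getting the diagonal coefficient $\kappa_s=\E s^4-3$ correct, to be the only real computation.
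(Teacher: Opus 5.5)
Your proposal is correct and follows essentially the same route as the paper. For the linearization, you use the exact normal-equation difference together with the resolvent identity; your factorization $\widehat A A_n^{-1}=I_d-\widehat A(\widehat H_{n,m_n}-H_n)$ is only a regrouping of the paper's remainder $R_{n,m_n}$ and gives the same $O_p(m_n^{-1/2})$ error after scaling. For the covariance, your direct entrywise expansion of $\E[\bs\bs^\top\br_n\br_n^\top\bs\bs^\top]$ arrives at the same middle matrix $\norm{\br_n}_2^2I_n+\br_n\br_n^\top+\kappa_s\diag(\br_n\odot\br_n)$ that the paper obtains through $v_j=s_ja-r_{n,j}$.
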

A proof is given in \appref{app:covariance-proof}.

\begin{remark}[Minimum first-order covariance for Rademacher sketches]
\label{cor:rademacher-optimal}
For sketches with i.i.d. standardized entries, Rademacher entries
yield the smallest first-order covariance matrix in the Loewner order.
Indeed, Jensen's inequality gives
\[
  \E s^4
  \geq
  (\E s^2)^2
  =
  1.
\]
Equality holds if and only if $s^2=1$ almost surely; together with
$\E s=0$, this is exactly the Rademacher law with
$\Pp(s=1)=\Pp(s=-1)=1/2$. Let $\Omega_n^{\mathrm{Rad}}$ denote the
covariance in \cref{eq:gamma} under this law. Subtracting the
Rademacher version of \cref{eq:gamma} from the general version gives
\begin{equation}
  \Omega_n-\Omega_n^{\mathrm{Rad}}
  =
  \frac{\E s^4-1}{n^2}
  A_nX_n^\top
  \diag(\br_n\odot\br_n)
  X_nA_n^\top
  \succeq0.
  \label{eq:rademacher-order}
\end{equation}
Thus, Rademacher entries yield the smallest first-order covariance matrix in
the Loewner order among sketches with i.i.d. standardized entries. This
comparison applies only within this class; sparse and structured sketches
require separate analysis.
\end{remark}

Recall that $\widetilde\Delta_{n,m,b}^{*}$ in
\cref{eq:linearized-delta} is the first-order approximation to the
bootstrap perturbation obtained by refitting,
$\bbetastar_{n,m,b}-\bbetahat_{n,m}$.
\begin{proposition}[First-order equivalence of refitting and linearization]
\label{prop:linearized-equivalence}
Under \cref{ass:regime,ass:sketch}, for the first bootstrap replicate
(and hence for any fixed replicate index) and every $\varepsilon>0$,
\begin{equation}
  \Pstar\!\left\{
  \sqrt{m_n}\norm{(\bbetastar_{n,m_n,1}-\bbetahat_{n,m_n})
  -\widetilde\Delta_{n,m_n,1}^{*}}_2>\varepsilon
  \right\}\pto0.
  \label{eq:linearized-equivalence}
\end{equation}
Thus, for any fixed replicate index, the bootstrap perturbations computed by
refitting and by linearization differ by $o_p^*(m_n^{-1/2})$ in
$\PS$-probability.
This statement is not uniform over a growing number of replicates.
\end{proposition}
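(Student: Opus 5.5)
The argument is an exact second-order identity for the resolvent map, followed by showing that the bootstrap moment perturbations are $O_p^*(m_n^{-1/2})$ in $\PS$-probability. Write $\widehat A:=\widehat A_{n,m_n}$, $\widehat A^*:=(\widehat H_{n,m_n,1}^{*}+\lambda_n I_d)^{-1}$, $V^*:=\widehat H_{n,m_n,1}^{*}-\widehat H_{n,m_n}$ and $\bm v^*:=\widehat{\bg}_{n,m_n,1}^{*}-\widehat{\bg}_{n,m_n}$. Since $\widehat{\bg}_{n,m_n}=(\widehat H_{n,m_n}+\lambda_nI_d)\bbetahat_{n,m_n}$, we have
\[
  \bbetastar_{n,m_n,1}-\bbetahat_{n,m_n}
  =\widehat A^*\{\widehat{\bg}^{*}_{n,m_n,1}-(\widehat H^{*}_{n,m_n,1}+\lambda_nI_d)\bbetahat_{n,m_n}\}
  =\widehat A^*(\bm v^*-V^*\bbetahat_{n,m_n}).
\]
By \cref{eq:linearized-delta}, $\widetilde\Delta^*_{n,m_n,1}=\widehat A(\bm v^*-V^*\bbetahat_{n,m_n})$. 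Hence the difference equals $(\widehat A^*-\widehat A)(\bm v^*-V^*\bbetahat_{n,m_n})=-\widehat A^*V^*\widehat A(\bm v^*-V^*\bbetahat_{n,m_n})$. This reduces the claim to the bound
\[
  \sqrt{m_n}\,\norm{\widehat A^*}_{\op}\norm{V^*}_{\op}\norm{\widehat A}_{\op}\bigl(\norm{\bm v^*}_2+\norm{V^*}_{\op}\norm{\bbetahat_{n,m_n}}_2\bigr)=o_p^*(1)
\]
in $\PS$-probability.

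Next I control the sketch-level quantities. \Cref{eq:design-regularity} gives bounded second moments of $\bz_{n,i}\bz_{n,i}^\top-H_n$ and $\bz_{n,i}u_{n,i}-\bg_n$, so by Chebyshev $\widehat H_{n,m_n}-H_n=O_p(m_n^{-1/2})$ and $\widehat\bg_{n,m_n}-\bg_n=O_p(m_n^{-1/2})$. Since the eigenvalues of $H_n+\lambda_nI_d$ are at least $c_0$ by \cref{ass:regime}, with $\PS$-probability tending to one $\lambda_{\min}(\widehat H_{n,m_n}+\lambda_nI_d)\ge c_0/2$. Then $\norm{\widehat A}_{\op}\le 2/c_0$, and $\bbetahat_{n,m_n}=O_p(1)$ because $\bg_n$ converges.

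For the bootstrap perturbations, multinomial weights give conditional mean zero and
\[
  \Estar\norm{V^*}_{\mathrm F}^2\le \frac1{m_n}\cdot\frac1{m_n}\sum_{i}\norm{\bz_{n,i}\bz_{n,i}^\top-\widehat H_{n,m_n}}_{\mathrm F}^2 .
\]
The inner empirical average is $O_p(1)$ by Markov's inequality and the moment bound, together with the consistency of $\widehat H_{n,m_n}$. Conditional Chebyshev then yields $V^*=O_p^*(m_n^{-1/2})$ in $\PS$-probability, and similarly $\bm v^*=O_p^*(m_n^{-1/2})$.

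It remains to bound $\widehat A^*$. On the event where $\norm{V^*}_{\op}\le c_0/4$ and $\lambda_{\min}(\widehat H_{n,m_n}+\lambda_nI_d)\ge c_0/2$, Weyl's inequality gives $\lambda_{\min}(\widehat H^*_{n,m_n,1}+\lambda_nI_d)\ge c_0/4$. Alternatively, positive semidefiniteness of $\widehat H^*$ gives $\norm{\widehat A^*}_{\op}\le\lambda_n^{-1}$ directly, but this is useless when $\lambda_\infty=0$, so the eigenvalue-perturbation route is needed. Combining everything, the bound above is $\sqrt{m_n}\,O_p^*(m_n^{-1/2})\,O_p^*(m_n^{-1/2})=O_p^*(m_n^{-1/2})$, which gives \cref{eq:linearized-equivalence}.

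The main technical obstacle is turning these conditional rates into the precise ``in $\PS$-probability'' statement. For the conditional Chebyshev step, on the sketch event $\mathcal E_n$ where the empirical second moments are at most $M$ and the eigenvalue bound holds, $\Pstar\{\norm{V^*}_{\op}>t\}\le M/(m_nt^2)$, and similarly for $\bm v^*$. Choosing $t=c_0/4$ and $\eta/\sqrt{m_n}$ makes the conditional failure probability small uniformly on $\mathcal E_n$. Since $\PS(\mathcal E_n)$ can be made arbitrarily close to one by choosing $M$ large, the conditional probability in \cref{eq:linearized-equivalence} tends to zero in $\PS$-probability.
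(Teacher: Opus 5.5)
Your proposal is correct and follows essentially the same route as the paper. Both arguments use the exact resolvent identity around $\widehat H_{n,m_n}+\lambda_nI_d$, which makes the remainder a product of two $O_p^*(m_n^{-1/2})$ bootstrap moment perturbations; conditional second-moment (Markov/Chebyshev) bounds for the multinomial bootstrap; and Weyl's inequality to keep the bootstrap inverse bounded even when $\lambda_\infty=0$. The differences are cosmetic: you factor the remainder as $-\widehat A^*V^*\widehat A(\cdots)$ rather than $-\widehat A V^*\widehat A^*(\cdots)$, and you spell out the conversion to the ``in $\PS$-probability'' statement more explicitly than the paper does.
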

A proof is given in \appref{app:linearized-proof}.

\subsection{Bootstrap consistency and coverage}
\label{sec:bootstrap-theory}

Using the first bootstrap replicate obtained by refitting, as defined in
\cref{eq:multinomial-weights,eq:weighted-moments,eq:exact-bootstrap-estimator},
define
\begin{equation}
\begin{aligned}
  F_{n,m_n}(t)
  &:=\PS\!\left\{\sqrt{m_n}\norm{\bbetahat_{n,m_n}-\bbeta_n}_2\leq t\right\},\\
  \widehat F_{n,m_n}(t)
  &:=\Pstar\!\left\{\sqrt{m_n}\norm{\bbetastar_{n,m_n,1}-\bbetahat_{n,m_n}}_2\leq t\right\}.
\end{aligned}
\label{eq:ideal-distribution-functions}
\end{equation}

\begin{theorem}[Gaussian limits for the estimator and the conditional bootstrap distribution]
\label{thm:bootstrap-consistency}
Under \cref{ass:regime,ass:sketch,ass:design},
\begin{align}
  \sqrt{m_n}(\bbetahat_{n,m_n}-\bbeta_n)&\dto N(\bzero,\Omega),
  \label{eq:target-clt}\\
  \sqrt{m_n}(\bbetastar_{n,m_n,1}-\bbetahat_{n,m_n})&\dto N(\bzero,\Omega)
  \quad\text{conditionally in $\PS$-probability}.
  \label{eq:bootstrap-clt}
\end{align}
\end{theorem}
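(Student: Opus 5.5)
The plan has two parts: the unconditional CLT \cref{eq:target-clt}, and then its bootstrap analogue \cref{eq:bootstrap-clt}, which reuses the same linearization under the conditional law.

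\textbf{Unconditional limit.} I start from \cref{prop:covariance}. It reduces \cref{eq:target-clt} to showing $m_n^{-1/2}\sum_{i=1}^{m_n}\bm\psi_{n,i}\dto N(\bzero,\Omega)$; Slutsky's lemma then finishes the argument. For each $n$ the summands form a triangular array of i.i.d. mean-zero vectors with covariance $\Omega_n\to\Omega$. By the Cram\'er--Wold device it suffices to fix $\bm a\in\R^d$ and apply the Lindeberg--Feller CLT to the scalars $\bm a^\top\bm\psi_{n,i}$. Lindeberg's condition will follow from a uniform bound $\sup_n\ES\norm{\bm\psi_{n,1}}_2^4<\infty$ (a Lyapunov condition with exponent $2$), which gives
\[
m_n\ES\bigl[(\bm a^\top\bm\psi_{n,1})^2\mathbf1\{|\bm a^\top\bm\psi_{n,1}|>\varepsilon\sqrt{m_n}\}\bigr]\le\ES|\bm a^\top\bm\psi_{n,1}|^4/(\varepsilon^2m_n)\to0 .
\]
To get the fourth-moment bound, I write $\bm\psi_{n,1}=A_n\{(\bz_{n,1}u_{n,1}-\bg_n)-(\bz_{n,1}\bz_{n,1}^\top-H_n)\bbeta_n\}$, which follows from $\bg_n-H_n\bbeta_n=\lambda_n\bbeta_n$. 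The operator norms $\norm{A_n}_{\op}\le c_0^{-1}$ and the norms $\norm{\bbeta_n}_2$ are bounded by \cref{ass:regime}, and \cref{eq:design-regularity} controls the fourth moments of both moment deviations. \Cref{ass:design} is not needed for convergence itself; it only guarantees that the limit is nondegenerate, which matters later for continuity of the limiting norm distribution.

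\textbf{Conditional (bootstrap) limit.} I mirror the argument under $\Pstar$. The first step is to write the refit as
\[
\bbetastar_{n,m_n,1}-\bbetahat_{n,m_n}=(\widehat H^*+\lambda_nI_d)^{-1}\bigl\{(\widehat{\bg}^*-\widehat{\bg})-(\widehat H^*-\widehat H)\bbetahat_{n,m_n}\bigr\},
\]
an exact identity. \Cref{prop:linearized-equivalence} already shows that this equals $\widetilde\Delta^*_{n,m_n,1}+o_p^*(m_n^{-1/2})$ in $\PS$-probability, so it suffices to prove
\[
\sqrt{m_n}\widetilde\Delta^*_{n,m_n,1}=m_n^{-1/2}\sum_i(w_{1,i}-1)\widehat{\bm\psi}_{n,m_n,i}\dto N(\bzero,\Omega)
\]
conditionally. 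Given the sketch, this is the bootstrap mean of the centered empirical influence vectors $\widehat{\bm\psi}_{n,m_n,i}$, which average to zero. I will use the conditional Lindeberg CLT for multinomial (Efron) bootstrap means. It requires, in $\PS$-probability:
\begin{enumerate}[label=(\roman*)]
  \item the empirical covariance converges, $m_n^{-1}\sum_i\widehat{\bm\psi}_{n,m_n,i}\widehat{\bm\psi}_{n,m_n,i}^\top\pto\Omega$;
  \item a Lindeberg-type condition holds, for which $m_n^{-1}\max_i\norm{\widehat{\bm\psi}_{n,m_n,i}}_2^2\pto0$ is sufficient.
\end{enumerate}
One way to get the bootstrap CLT is to represent the multinomial weights through Poissonization, or to cite the standard bootstrap-of-the-mean theorem (Gin\'e--Zinn / Bickel--Freedman in its triangular-array form).

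\textbf{Main obstacle: condition (i).} This is the step I expect to be hardest. I plan to expand
\[
\widehat{\bm\psi}_{n,m_n,i}-\bm\psi_{n,i}=(\widehat A-A_n)\{\cdots\}+A_n\bigl\{\bz_{n,i}\bz_{n,i}^\top(\bbeta_n-\bbetahat)-\lambda_n(\bbetahat-\bbeta_n)\bigr\}.
\]
Here $\widehat H-H_n=O_p(m_n^{-1/2})$ and $\bbetahat-\bbeta_n=O_p(m_n^{-1/2})$ by the second-moment bounds and \cref{prop:covariance}, and $\norm{\widehat A-A_n}_{\op}=O_p(m_n^{-1/2})$. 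This yields
\[
m_n^{-1}\sum_i\norm{\widehat{\bm\psi}_{n,m_n,i}-\bm\psi_{n,i}}_2^2=O_p(m_n^{-1})\cdot m_n^{-1}\sum_i\bigl(1+\norm{\bz_{n,i}}_2^4+\norm{\bz_{n,i}u_{n,i}}_2^2\bigr)=o_p(1).
\]
The fourth-moment assumption, combined with boundedness of $H_n$ and $\bg_n$, makes these averages $O_p(1)$. Given that, it suffices to prove $m_n^{-1}\sum_i\bm\psi_{n,i}\bm\psi_{n,i}^\top-\Omega_n\pto0$, which follows from a weak law for the triangular array via Chebyshev, since the summands have uniformly bounded second moments by the fourth-moment bound. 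Cauchy--Schwarz then transfers the convergence to the empirical influence vectors.

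For condition (ii), I use $\max_i\norm{\bm\psi_{n,i}}_2^2\le\bigl(\sum_i\norm{\bm\psi_{n,i}}_2^4\bigr)^{1/2}=O_p(m_n^{1/2})$, so $m_n^{-1}\max_i\norm{\bm\psi_{n,i}}_2^2\pto0$; the perturbation terms are handled the same way.
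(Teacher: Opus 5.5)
Your proposal is correct and follows the paper's proof essentially step for step: Lindeberg--Feller plus Cram\'er--Wold via the uniform fourth-moment bound on $\bm\psi_{n,1}$ for \cref{eq:target-clt}; and, for \cref{eq:bootstrap-clt}, a reduction through \cref{prop:linearized-equivalence} and conditional Slutsky to the bootstrap mean of the zero-mean empirical influence vectors, with the empirical covariance handled by the same $\widehat{\bm\psi}_{n,m_n,i}-\bm\psi_{n,i}$ decomposition, a weak law for $m_n^{-1}\sum_i\bm\psi_{n,i}\bm\psi_{n,i}^\top$, and Cauchy--Schwarz. The differences are cosmetic: you verify the conditional Lindeberg condition through $m_n^{-1}\max_i\norm{\widehat{\bm\psi}_{n,m_n,i}}_2^2\pto0$ where the paper bounds the empirical fourth moment, and you should drop the factor $m_n$ on the left of your unconditional Lindeberg display, since the Lindeberg sum for $m_n^{-1/2}\sum_i\bm a^\top\bm\psi_{n,i}$ is just $\ES[(\bm a^\top\bm\psi_{n,1})^2\mathbf1\{|\bm a^\top\bm\psi_{n,1}|>\varepsilon\sqrt{m_n}\}]$.
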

A proof is given in \appref{app:bootstrap-proof}.

For $B_n$ bootstrap draws, let
\begin{equation}
  \widehat F_{n,m_n,B_n}(t)
  :=\frac1{B_n}\sum_{b=1}^{B_n}
  \mathbf 1\!\left\{\sqrt{m_n}\,e_{n,m_n,b}^{*}\leq t\right\}.
  \label{eq:empirical-bootstrap-cdf}
\end{equation}

\begin{corollary}[Uniform consistency of the bootstrap distribution]
\label{cor:bootstrap-distribution-consistency}
Under the assumptions of \cref{thm:bootstrap-consistency},
\begin{equation}
  \sup_{t\in\R}|F_{n,m_n}(t)-\widehat F_{n,m_n}(t)|\pto0.
  \label{eq:ideal-uniform}
\end{equation}
If, in addition, $B_n\to\infty$, then
\begin{equation}
  \sup_{t\in\R}|F_{n,m_n}(t)-\widehat F_{n,m_n,B_n}(t)|\pto0.
  \label{eq:empirical-uniform}
\end{equation}
\end{corollary}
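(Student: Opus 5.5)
We derive both statements from \cref{thm:bootstrap-consistency}, with a Pólya-type argument carrying weak convergence to uniform convergence of distribution functions. Let $\bm G\sim N(\bzero,\Omega)$ and $F(t):=\Pp(\norm{\bm G}_2\leq t)$. Applying the continuous mapping theorem with $\bm x\mapsto\norm{\bm x}_2$ to \cref{eq:target-clt} gives $F_{n,m_n}(t)\to F(t)$ at every continuity point of $F$. Applying it to \cref{eq:bootstrap-clt} gives $\widehat F_{n,m_n}(t)\to F(t)$ in $\PS$-probability. The conditional version of the continuous mapping theorem holds because conditional weak convergence in probability can be metrized by the bounded Lipschitz distance, and composing with a $1$-Lipschitz map does not increase that distance.

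The key step is continuity of $F$ on all of $\R$. By \cref{ass:design}, $\rank(\Omega)=r\geq1$, so $\bm G=\Omega^{1/2}\bm\xi$ lies in an $r$-dimensional subspace and has a nondegenerate Gaussian law there. Then $\norm{\bm G}_2^2$ is a positive-weighted sum of independent $\chi^2_1$ variables and has a density on $(0,\infty)$, so $\Pp(\norm{\bm G}_2=t)=0$ for all $t$. In particular there is no atom at $t=0$, and this is exactly where $\rank(\Omega)\geq1$ is used. For $t<0$ every distribution function involved vanishes.

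Pólya's theorem then gives $\sup_t|F_{n,m_n}(t)-F(t)|\to0$. For the bootstrap we use the in-probability form. Fix $\eta>0$ and choose finitely many points $0=t_0<t_1<\dots<t_L$ with $F(t_{k+1})-F(t_k)<\eta$ and $1-F(t_L)<\eta$. Monotonicity of $\widehat F_{n,m_n}$ bounds $\sup_t|\widehat F_{n,m_n}(t)-F(t)|$ by $\max_k|\widehat F_{n,m_n}(t_k)-F(t_k)|+\eta$. The maximum is over finitely many terms, each tending to zero in probability, so it tends to zero in probability. The triangle inequality through $F$ then yields \cref{eq:ideal-uniform}.

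For \cref{eq:empirical-uniform}, note that conditionally on $\mathcal S_n$ the errors $\sqrt{m_n}e^*_{n,m_n,b}$, $b=1,\dots,B_n$, are i.i.d. with distribution function $\widehat F_{n,m_n}$. The Dvoretzky--Kiefer--Wolfowitz inequality, with Massart's constant, gives
\[
\Pstar\Bigl(\sup_t|\widehat F_{n,m_n,B_n}(t)-\widehat F_{n,m_n}(t)|>\varepsilon\Bigr)\leq2e^{-2B_n\varepsilon^2}.
\]
This bound is uniform in the conditioning, so taking expectations shows the supremum is $o_p(1)$ under the joint law as $B_n\to\infty$. Combining with \cref{eq:ideal-uniform} by the triangle inequality finishes the proof.

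The main obstacle is the precise handling of conditional convergence in $\PS$-probability, specifically turning pointwise conditional convergence into uniform convergence in probability. The finite-grid argument above avoids any subsequence extraction. The only substantive input beyond \cref{thm:bootstrap-consistency} is the atomlessness of $\norm{\bm G}_2$.
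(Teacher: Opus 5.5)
Your proposal is correct and follows the paper's proof essentially step for step. The paper also gets continuity of the limiting norm distribution from $\rank(\Omega)\geq1$ and uses the (conditional) continuous mapping theorem with Pólya's theorem and its finite-grid monotonicity argument in probability. It then handles the $B_n$ statement the same way you do: it applies the Dvoretzky--Kiefer--Wolfowitz bound conditionally on the sketch, takes expectations, and finishes with the triangle inequality.
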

A proof is given in \appref{app:distribution-proof}.

\begin{theorem}[Asymptotically exact coverage for error bounds computed from the observed sketch]
\label{thm:exact-coverage}
Let $B_n\to\infty$.  Under \cref{ass:regime,ass:sketch,ass:design}, for every $\alpha\in(0,1)$, let
$\widehat\epsilon_{n,m_n,B_n}(\alpha)$ and
$\widetilde\epsilon_{n,m_n,B_n}(\alpha)$ be the empirical error bounds
defined in \cref{eq:exact-bootstrap-threshold,eq:linearized-error} with
$m=m_n$ and $B=B_n$.  Then
\begin{align}
  \Pp\left\{
  \norm{\bbetahat_{n,m_n}-\bbeta_n}_2
  \leq\widehat\epsilon_{n,m_n,B_n}(\alpha)
  \right\}&\longrightarrow1-\alpha,
  \label{eq:exact-coverage}\\
  \Pp\left\{
  \norm{\bbetahat_{n,m_n}-\bbeta_n}_2
  \leq\widetilde\epsilon_{n,m_n,B_n}(\alpha)
  \right\}&\longrightarrow1-\alpha.
  \label{eq:linearized-coverage}
\end{align}
Here $\Pp$ is the joint probability over the sketch and the bootstrap draws
used to construct the error bound. The sketched estimator and each error bound
are computed from the same observed sketch.
\end{theorem}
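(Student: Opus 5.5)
The refitted bound \cref{eq:exact-coverage} follows from \cref{cor:bootstrap-distribution-consistency} plus continuity and strict monotonicity of the limit law near its quantiles. The linearized bound \cref{eq:linearized-coverage} then follows by redoing the consistency argument for the linearized bootstrap.

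\emph{Limit law.} Let $G$ be the distribution function of $\norm{\bm Z}_2$ with $\bm Z\sim N(\bzero,\Omega)$. By \cref{ass:design}, $\Omega\neq0$, so $\norm{\bm Z}_2^2$ is a nonnegative combination of independent $\chi^2_1$ variables with at least one positive weight. Hence $G$ is continuous on $\R$ and strictly increasing on $[0,\infty)$, and $t_\alpha:=G^{-1}(1-\alpha)>0$ is well defined.

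\emph{Convergence of $F_{n,m_n}$.} From \cref{thm:bootstrap-consistency} and the continuous mapping theorem, $F_{n,m_n}\to G$ pointwise. Because $G$ is continuous, Pólya's theorem upgrades this to uniform convergence. Combining with \cref{eq:empirical-uniform} gives
\[
\sup_t|\widehat F_{n,m_n,B_n}(t)-G(t)|\pto0 .
\]

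\emph{Consistency of the bootstrap quantile.} Fix $\eta>0$. Strict monotonicity gives $G(t_\alpha-\eta)<1-\alpha<G(t_\alpha+\eta)$. On the event that the uniform deviation above is smaller than half the smaller gap, the generalized-inverse definition forces
\[
\sqrt{m_n}\,\widehat\epsilon_{n,m_n,B_n}(\alpha)\in[t_\alpha-\eta,\,t_\alpha+\eta].
\]
Since this event has probability tending to one, $\sqrt{m_n}\,\widehat\epsilon_{n,m_n,B_n}(\alpha)\pto t_\alpha$. Here the empirical quantile is exactly $\widehat F_{n,m_n,B_n}^{-1}(1-\alpha)$ once the scaling by $\sqrt{m_n}$ is accounted for.

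\emph{Coverage for the refitted bound.} The statistic satisfies $\sqrt{m_n}\norm{\bbetahat_{n,m_n}-\bbeta_n}_2\dto G$ (it need not be independent of the bound), and the bound converges in probability to the constant $t_\alpha$. Slutsky's lemma therefore gives
\[
\sqrt{m_n}\norm{\bbetahat_{n,m_n}-\bbeta_n}_2-\sqrt{m_n}\,\widehat\epsilon_{n,m_n,B_n}(\alpha)\dto \norm{\bm Z}_2-t_\alpha .
\]
Because $G$ has no atom at $t_\alpha$, the probability of the event $\{\,\cdot\,\le 0\}$ converges to $G(t_\alpha)=1-\alpha$, which is \cref{eq:exact-coverage}. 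Only convergence in probability of the bound is needed; no joint convergence is required.

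\emph{The linearized bound.} The main obstacle is \cref{eq:linearized-coverage}. \Cref{prop:linearized-equivalence} holds only per fixed replicate, so it does not directly control an empirical distribution over $B_n\to\infty$ draws. Two routes are available.
\begin{enumerate}[label=(\roman*)]
\item Repeat the argument above with $\widetilde F$, the conditional law of $\sqrt{m_n}\norm{\widetilde\Delta^*_{n,m_n,1}}_2$.
\item Use the single-draw equivalence directly. It says $\Pstar\{|\sqrt{m_n}(e^*_{n,m_n,1}-\widetilde e^*_{n,m_n,1})|>\varepsilon\}\pto0$. Given a sketch, the $B_n$ replicate pairs are conditionally i.i.d. Hence the expected fraction of replicates whose two errors differ by more than $\varepsilon$ equals this conditional probability, and it tends to zero. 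Markov's inequality then shows the fraction itself tends to zero in probability, which sandwiches the two empirical distribution functions within $\varepsilon$-shifts. Continuity of $G$ finishes the argument.
\end{enumerate}

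I would take route (ii) because it avoids re-proving a conditional CLT. The required conditional law of large numbers for the empirical distribution functions follows from a conditional Hoeffding/Glivenko–Cantelli bound for i.i.d. draws. Combined with the sandwich, this gives $\sup_t|\widetilde F_{n,m_n,B_n}(t)-G(t)|\pto0$ for the empirical distribution function $\widetilde F_{n,m_n,B_n}$ of $\sqrt{m_n}\,\widetilde e^*_{n,m_n,b}$. The quantile and Slutsky steps above then apply verbatim to $\widetilde\epsilon_{n,m_n,B_n}(\alpha)$.
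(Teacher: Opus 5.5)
Your argument is correct. For \cref{eq:exact-coverage} it is the paper's proof. The paper also combines \cref{cor:bootstrap-distribution-consistency} with P\'olya's theorem to get $\sup_t|\widehat F_{n,m_n,B_n}(t)-G(t)|\pto0$ and deduces $\sqrt{m_n}\,\widehat\epsilon_{n,m_n,B_n}(\alpha)\pto G^{-1}(1-\alpha)$. It then writes out an explicit $\eta$-sandwich, which is the hands-on form of your Slutsky step.

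For \cref{eq:linearized-coverage} the paper takes your route (i), and it costs nothing extra there. The conditional CLT for $\sqrt{m_n}\,\widetilde\Delta^*_{n,m_n,1}$ is \cref{eq:bootstrap-moment-clt}, already proved inside the proof of \cref{thm:bootstrap-consistency}; the refitted CLT is derived from it via \cref{prop:linearized-equivalence}. The paper then applies continuous mapping, the subsequence/P\'olya argument, and DKW to the conditionally i.i.d.\ linearized errors, and repeats the quantile and sandwich steps.

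Your route (ii) instead transfers the refitted result to the linearized one at the level of empirical distribution functions. Let $D_n(\varepsilon)$ be the fraction of the $B_n$ replicates with $\sqrt{m_n}\,|e^*_{n,m_n,b}-\widetilde e^*_{n,m_n,b}|>\varepsilon$, and let $\pi_n(\varepsilon)$ be the single-replicate conditional probability of that event. Then $\Estar D_n(\varepsilon)=\pi_n(\varepsilon)\pto0$, hence $\Pp\{D_n(\varepsilon)>\eta\}\le\E\min\{1,\pi_n(\varepsilon)/\eta\}\to0$. Combining this with
\[
  \widehat F_{n,m_n,B_n}(t-\varepsilon)-D_n(\varepsilon)
  \le\widetilde F_{n,m_n,B_n}(t)
  \le\widehat F_{n,m_n,B_n}(t+\varepsilon)+D_n(\varepsilon)
\]
and uniform continuity of $G$ gives $\sup_t|\widetilde F_{n,m_n,B_n}(t)-G(t)|\pto0$.

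What each route buys: yours uses only stated results as black boxes. It also makes a point the paper does not make: \cref{prop:linearized-equivalence} is explicitly not uniform over a growing number of replicates, yet averaging makes per-replicate equivalence enough for the empirical quantiles. The paper's route keeps the linearized result self-contained, with no reference to refits.

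Two minor remarks. First, your coupling uses that $e^*_{n,m_n,b}$ and $\widetilde e^*_{n,m_n,b}$ are built from the same weight vector $\bw_b$, as in the paper's definitions. This is harmless, since coverage depends only on the law of $(\bbetahat_{n,m_n},\widetilde\epsilon_{n,m_n,B_n}(\alpha))$. Second, route (ii) needs no further Hoeffding or Glivenko--Cantelli bound. The only uniform law of large numbers it uses is \cref{eq:empirical-uniform} for the refitted draws, and $D_n(\varepsilon)$ is handled by Markov's inequality alone.
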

The proof in \appref{app:coverage-proof} combines quantile consistency with
upper and lower bounds for the random bootstrap error bound.

\subsection{Order-statistic correction and sketch-size selection}

\begin{proposition}[One-sided order-statistic correction for a conditional bootstrap quantile]
\label{prop:mc-protected}
Fix positive integers $n,m,B$ and levels $\alpha,\delta\in(0,1)$, and
put $p:=1-\alpha$.  Recall from \cref{sec:mc-protection} that
$q_{n,m}^{*}(p)$ is the conditional $p$-quantile of one bootstrap error
obtained by refitting. Then
\begin{equation}
  \Pstar\left\{
  \widehat\epsilon_{n,m,B}^{\mathrm{MC}}(\alpha,\delta)
  \geq q_{n,m}^{*}(1-\alpha)
  \right\}\geq1-\delta
  \label{eq:mc-conditional}
\end{equation}
almost surely with respect to the sketch law $\PS$. Thus, conditional
on the observed sketch, the adjusted error bound in \cref{eq:mc-threshold}
is no smaller than the exact conditional bootstrap quantile with probability
at least $1-\delta$.  If $m_n\to\infty$ under
\cref{ass:regime,ass:sketch,ass:design}, then
\begin{equation}
  \liminf_{n\to\infty}
  \Pp\left\{
  \norm{\bbetahat_{n,m_n}-\bbeta_n}_2
  \leq\widehat\epsilon_{n,m_n,B_n}^{\mathrm{MC}}(\alpha,\delta)
  \right\}\geq1-\alpha-\delta
  \label{eq:mc-coverage}
\end{equation}
for any positive-integer sequence $B_n$ for which the adjusted order
statistic is finite.
\end{proposition}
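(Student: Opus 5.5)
The proof has two parts: a finite-sample order-statistic argument that holds conditionally on the sketch, and an asymptotic step that combines it with the bootstrap Gaussian limit.

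\emph{Part 1: the conditional tolerance bound \cref{eq:mc-conditional}.} Fix the sketch and write $q:=q_{n,m}^{*}(p)$. Conditionally on $\mathcal S_{n,m}$ the multinomial vectors $\bw_1,\ldots,\bw_B$ are i.i.d., so $e_{n,m,1}^{*},\ldots,e_{n,m,B}^{*}$ are i.i.d. under $\Pstar$. By the generalized-inverse definition of $q$, we have $\Pstar(e_{n,m,1}^{*}<q)\le p$; otherwise left-continuity of $t\mapsto\Pstar(e^{*}<t)$ would give a $t<q$ with $\Pstar(e^{*}\le t)\ge p$.

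Let $N:=\#\{b:e_{n,m,b}^{*}<q\}$. Then $N\sim\operatorname{Bin}(B,p')$ with $p'\le p$, so $N$ is stochastically dominated by $\operatorname{Bin}(B,p)$. The order statistic satisfies $e_{n,m,(k)}^{*}<q$ exactly when $N\ge k$. Taking $k=k_B(p,\delta)<\infty$ gives
\[
\Pstar\{e_{n,m,(k)}^{*}<q\}\le\Pp\{\operatorname{Bin}(B,p)\ge k\}\le\delta
\]
by \cref{eq:mc-order}. If $k_B=\infty$ the bound is $+\infty$ and the inequality is trivial. This holds for every sketch realization, hence almost surely.

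\emph{Part 2: the coverage bound \cref{eq:mc-coverage}.} Let $T_n:=\sqrt{m_n}\norm{\bbetahat_{n,m_n}-\bbeta_n}_2$. On the event $\{\widehat\epsilon^{\mathrm{MC}}\ge q^{*}_{n,m_n}(p)\}$, noncoverage implies $T_n>\sqrt{m_n}\,q^{*}_{n,m_n}(p)$. Integrating Part 1 over the sketch gives
\[
\Pp\{\text{noncoverage}\}\le\delta+\PS\{T_n>\sqrt{m_n}\,q^{*}_{n,m_n}(p)\}.
\]
It remains to show that the last probability tends to $\alpha$, or at least that its $\limsup$ is at most $\alpha$.

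Note that $\sqrt{m_n}\,q^{*}_{n,m_n}(p)=\widehat F_{n,m_n}^{-1}(p)$. By \cref{thm:bootstrap-consistency}, the limit law of $\norm{N(\bzero,\Omega)}_2$ has distribution function $F$. Under \cref{ass:design} ($\rank\Omega\ge1$), $F$ is continuous and strictly increasing on $(0,\infty)$. Hence $F^{-1}(p)$ is a continuity point, and uniform convergence $\sup_t|\widehat F_{n,m_n}(t)-F(t)|\pto0$ gives $\widehat F_{n,m_n}^{-1}(p)\pto F^{-1}(p)$. Meanwhile $T_n\dto\norm{N(\bzero,\Omega)}_2$ by \cref{eq:target-clt}. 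By Slutsky's lemma and continuity of $F$ at $F^{-1}(p)$,
\[
\PS\{T_n>\widehat F_{n,m_n}^{-1}(p)\}\to1-F(F^{-1}(p))=\alpha.
\]

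\emph{Main obstacle.} The delicate step is the quantile consistency of $\widehat F_{n,m_n}^{-1}(p)$. This is where nondegeneracy is essential: it gives strict monotonicity of $F$ at $F^{-1}(p)$, so the quantile is unique and a small neighbourhood of it carries positive limiting mass. I would argue it directly. For any $\eta>0$, $F(F^{-1}(p)\pm\eta)$ lies strictly above (respectively below) $p$. With probability tending to one, $\widehat F_{n,m_n}$ is within half that margin of $F$ at these two points. The quantile is then trapped in $[F^{-1}(p)-\eta,F^{-1}(p)+\eta]$.

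Finally, $B_n$ plays no role beyond the finiteness of the order statistic, because Part 1 holds for every fixed $B$.
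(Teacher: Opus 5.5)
Your proposal is correct and follows essentially the same route as the paper. For the conditional tolerance bound, the number of bootstrap errors strictly below $q_{n,m}^{*}(p)$ is binomially dominated by $\operatorname{Bin}(B,p)$. For the coverage bound, a union bound is combined with quantile consistency, namely that $\sqrt{m_n}\,q_{n,m_n}^{*}(p)$ converges in probability to the $(1-\alpha)$-quantile of $\norm{N(\bzero,\Omega)}_2$, and with the Gaussian limit for $\sqrt{m_n}(\bbetahat_{n,m_n}-\bbeta_n)$. The differences are only cosmetic: you use Slutsky's lemma where the paper writes an explicit sandwich inequality, and you spell out the left-continuity and quantile-trapping steps that the paper states without detail.
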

The same conditional result also holds for the linearized errors and their
corresponding exact conditional quantile,
because the linearized replicate errors are conditionally i.i.d.
A proof is given in \appref{app:mc-proof}.

The rule in \cref{eq:selection-rule} is defined for one fixed problem.
For its asymptotic analysis, fix a positive integer $K$,
a level $\alpha\in(0,1)$, and a tolerance $\tau>0$. For each $n$, let
\[
  m_{n,1}<\cdots<m_{n,K}
\]
be the ordered candidate sizes. At candidate $j$, write
\[
  \bbetahat_{n,j}:=\bbetahat_{n,m_{n,j}}
\]
for the sketched estimator, and let
$\widehat\epsilon_{n,j}(\alpha/K)$ be a nonnegative error bound for that
estimator.
Here $j$ indexes the position in the candidate grid, and $n$ indexes the
asymptotic sequence. 

Candidate $j$ satisfies the selection criterion when
$\widehat\epsilon_{n,j}(\alpha/K)\leq\tau$. As in
\cref{eq:selection-rule}, select the smallest candidate satisfying this
criterion:
\[
  \widehat j_n
  :=
  \begin{cases}
    \displaystyle
    \min\left\{j\in\{1,\ldots,K\}:
    \widehat\epsilon_{n,j}(\alpha/K)\leq\tau\right\},
    &\text{if the set is nonempty},\\[2mm]
    0,&\text{if the set is empty}.
  \end{cases}
\]
Thus $\widehat j_n=j\geq1$ selects candidate size $m_{n,j}$.
The value $\widehat j_n=0$ means that no candidate satisfies the selection
criterion. The event to be controlled is the selection of a candidate whose
coefficient error exceeds $\tau$.

\begin{theorem}[Familywise error control for selection among sketch sizes]
\label{thm:simultaneous-selection}
Under the preceding setup, suppose that $m_{n,j}\to\infty$ for each
$j=1,\ldots,K$ and that the error bound for each candidate has asymptotic
coverage at least $1-\alpha/K$:
\[
  \liminf_{n\to\infty}
  \Pp\left\{
  \norm{\bbetahat_{n,j}-\bbeta_n}_2
  \leq\widehat\epsilon_{n,j}(\alpha/K)
  \right\}
  \geq1-\frac{\alpha}{K},
  \qquad j=1,\ldots,K.
\]
Then, regardless of dependence among the candidate sketches,
\begin{equation}
  \limsup_{n\to\infty}
  \Pp\bigg(
  \bigcup_{j=1}^K
  \left\{
  \widehat j_n=j,\,
  \norm{\bbetahat_{n,j}-\bbeta_n}_2>\tau
  \right\}
  \bigg)
  \leq\alpha.
  \label{eq:familywise-coverage}
\end{equation}
\end{theorem}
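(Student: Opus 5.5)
The plan is an event inclusion followed by a union bound; the result is essentially Bonferroni. Fix $n$ and $j\in\{1,\ldots,K\}$. On the event $\{\widehat j_n=j\}$ the selection rule gives $\widehat\epsilon_{n,j}(\alpha/K)\leq\tau$. If, on top of that, $\norm{\bbetahat_{n,j}-\bbeta_n}_2>\tau$, then $\norm{\bbetahat_{n,j}-\bbeta_n}_2>\widehat\epsilon_{n,j}(\alpha/K)$, so the error bound at candidate $j$ fails to cover. Writing
\[
  \mathcal E_{n,j}:=\left\{\norm{\bbetahat_{n,j}-\bbeta_n}_2>\widehat\epsilon_{n,j}(\alpha/K)\right\},
\]
this gives the pointwise inclusion
\[
  \left\{\widehat j_n=j,\,\norm{\bbetahat_{n,j}-\bbeta_n}_2>\tau\right\}
  \subseteq\mathcal E_{n,j},
  \qquad j=1,\ldots,K.
\]
The inclusion uses only the definition of $\widehat j_n$ and the value $\tau$. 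It involves no probabilistic structure, and the case $\widehat j_n=0$ contributes nothing to the union.

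Next I take the union over $j$ and apply subadditivity of $\Pp$:
\[
  \Pp\bigg(\bigcup_{j=1}^K\left\{\widehat j_n=j,\,\norm{\bbetahat_{n,j}-\bbeta_n}_2>\tau\right\}\bigg)
  \leq\sum_{j=1}^K\Pp(\mathcal E_{n,j})
  =\sum_{j=1}^K\left[1-\Pp\left\{\norm{\bbetahat_{n,j}-\bbeta_n}_2\leq\widehat\epsilon_{n,j}(\alpha/K)\right\}\right].
\]
The union bound requires no independence or any other joint structure among the $K$ candidate sketches or their bootstrap draws. This is exactly why the conclusion holds \emph{regardless of dependence}, including for nested prefixes of one sketch.

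Finally I take $\limsup_{n\to\infty}$. Since $K$ is fixed, the $\limsup$ of a finite sum is at most the sum of the $\limsup$'s. Also $\limsup_n(1-a_n)=1-\liminf_n a_n$. The marginal coverage hypothesis then bounds each term by $\alpha/K$, so the total is at most $K\cdot\alpha/K=\alpha$, which is \cref{eq:familywise-coverage}.

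I do not expect a genuine obstacle. The only points needing care are that the hypothesis is stated as a $\liminf$, so the sum must be handled through $\limsup$ with $K$ fixed, and that the relevant probability is the joint law over all sketches and bootstrap draws. Because every event lives on that single probability space, the union bound applies directly.

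To use this result with the bootstrap bounds of \cref{sec:bootstrap-calibration}, one would check the marginal hypothesis separately. \Cref{thm:exact-coverage} at level $\alpha/K$ supplies it with $B_{n,j}\to\infty$ and $m_{n,j}\to\infty$. That step is outside the present statement.
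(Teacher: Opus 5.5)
Your proposal is correct and follows essentially the same route as the paper's proof: you establish the inclusion $\{\widehat j_n=j,\ \norm{\bbetahat_{n,j}-\bbeta_n}_2>\tau\}\subseteq\{\norm{\bbetahat_{n,j}-\bbeta_n}_2>\widehat\epsilon_{n,j}(\alpha/K)\}$, apply the Bonferroni union bound, and bound the $\limsup$ of the finite sum by the sum of the $\limsup$'s, each at most $\alpha/K$. As in the paper, no independence among the candidate sketches is needed.
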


A sufficient condition for the required marginal coverage is to apply
\cref{thm:exact-coverage} separately to each candidate sequence, with
$B_{n,j}\to\infty$ for every fixed $j$. Using nominal coverage
$1-\alpha/K$ with fixed $B_{n,j}$ does not establish this condition. With a
fixed number of bootstrap replicates, the theorem applies only if a
finite-sample procedure has been shown to provide the required coverage at
every candidate size. The unadjusted empirical bootstrap quantile alone has no
such guarantee.

Equivalently, the asymptotic probability of selecting an estimator whose
coefficient error exceeds the prescribed tolerance is at most $\alpha$.
A proof is given in \appref{app:selection-proof}.

\section{Numerical experiments}
\label{sec:experiments}

We organize the experiments around four separate questions.
\begin{enumerate}
\item Does the reported error bound cover the coefficient error at the requested nominal level?
\item How close is the linearized bootstrap to the bootstrap with refitting, and how much computation does linearization save?
\item When the number of bootstrap replicates is finite, does the order-statistic correction reduce the probability that the reported error bound is too small?
\item How does an adjustment for multiple comparisons across candidate sketch sizes affect the selected sketch size?
\end{enumerate}
Each subsection below addresses one of these questions.

\subsection{Experimental setup and methods}
\label{sec:experimental-design}

The main experiments use two real data sets and two synthetic designs. Their dimensions are listed in \cref{tab:benchmark-design}. RAND-HIE is the RAND Health Insurance Experiment subset distributed with \texttt{statsmodels}; its response is the number of outpatient physician visits \citep{cameron2005microeconometrics,seabold2010statsmodels}. Diabetes is the regression data set distributed with \texttt{scikit-learn}. For both real data sets, the response and all nonconstant feature columns are standardized using population standard deviations, after which an intercept is added.

For each synthetic design, Gaussian matrices are orthonormalized to obtain $Q\in\R^{n\times d}$ and $V\in\R^{d\times d}$. The design matrix is
\[
  X=Q\operatorname{diag}(\sigma_1,\ldots,\sigma_d)V^\top,
\]
where the singular values are geometrically spaced. The response is
\[
  \by=X\bbeta_0+0.5\bm\varepsilon,
  \qquad
  \bm\varepsilon\sim N(\bzero,I_n),
\]
where $\bbeta_0$ is a fixed sinusoidal coefficient vector.
More precisely, for a target condition number $\kappa$, the implementation uses
$\sigma_j=\sqrt n\,\kappa^{-(j-1)/\{2(d-1)\}}$ and
$\beta_{0,j}=c\sin\{0.25+(j-1)(2.75\pi-0.25)/(d-1)\}$, where $c$ is
chosen so that $\norm{\bbeta_0}_2=\sqrt d$.

\begin{table}[H]
\centering
\caption{Data sets used in the main numerical experiments. The condition numbers refer to $X^\top X$.}
\label{tab:benchmark-design}
\begingroup\small\setlength{\tabcolsep}{5pt}
\begin{tabular}{lrrll}
\toprule
Data set & $n$ & $d$ & Type & Condition number \\
\midrule
RAND-HIE & 20190 & 10 & real & -- \\
Diabetes & 442 & 11 & real & -- \\
Ill-conditioned synthetic & 10000 & 20 & synthetic & $10^4$ \\
Well-conditioned synthetic & 10000 & 20 & synthetic & $10^2$ \\
\bottomrule
\end{tabular}
\endgroup
\end{table}

For an observed sketch, the coefficient error is
$\norm{\bbetahat_{n,m}-\bbeta_n}_2$.
The bootstrap error bound is $\widehat\epsilon$, as defined in
\cref{sec:bootstrap-calibration}. In each Monte Carlo repetition, we generate
a new sketch, compute the sketched estimator and its error bound, and record
whether
\[
  \norm{\bbetahat_{n,m}-\bbeta_n}_2
  \leq \widehat\epsilon.
\]
Empirical coverage is the proportion of Monte Carlo repetitions for which this inequality holds.

\Cref{tab:experimental-methods} summarizes the methods.

\medskip
\noindent\textbf{Components of the bootstrap methods.}
The bootstrap methods involve two choices: how to compute replicate errors
and how to choose an error bound from those errors. Replicate errors are
computed either by refitting the ridge model or by linearization. The
order-statistic correction can be applied after either calculation. The
Gaussian approximation is a separate method that does not use bootstrap
resampling.

\begin{table}[t]
\centering
\caption{Methods used in the numerical experiments. The order-statistic correction is defined in \cref{sec:mc-protection}.}
\label{tab:experimental-methods}
\begingroup\small\setlength{\tabcolsep}{4pt}
\begin{tabularx}{\linewidth}{>{\raggedright\arraybackslash}p{0.28\linewidth}>{\raggedright\arraybackslash}X>{\raggedright\arraybackslash}X}
\toprule
Method & How the coefficient-error distribution is approximated & How the error bound is chosen \\
\midrule
Bootstrap with refitting
& Resample the paired compressed rows and solve the ridge problem again in every bootstrap replicate.
& Use the empirical $(1-\alpha)$ quantile of the resulting errors. \\
\addlinespace[0.5em]
Bootstrap with refitting and order-statistic correction
& Use the same errors obtained by refitting as in the preceding row.
& Replace the empirical quantile by the higher order statistic in \cref{eq:mc-threshold}. \\
\addlinespace[0.5em]
Linearized bootstrap
& Use the first-order change in the ridge solution, without solving a new ridge problem in every replicate.
& Use the empirical $(1-\alpha)$ quantile of the linearized errors. \\
\addlinespace[0.5em]
Linearized bootstrap with order-statistic correction
& Use the same linearized errors as in the preceding row.
& Apply the same higher-order-statistic correction. \\
\addlinespace[0.5em]
Gaussian approximation
& Estimate the influence covariance from the observed compressed rows; no bootstrap resampling is used.
& Use the $(1-\alpha)$ quantile of the norm of the corresponding centered Gaussian vector. \\
\bottomrule
\end{tabularx}
\endgroup
\end{table}

For evaluation only, we estimate the coefficient-error quantile using
independent sketches. At each sketch size in the main coverage experiment,
the \emph{independent-sketch reference quantile} is the empirical
$(1-\alpha)$ quantile of coefficient errors from $3000$ newly generated
sketches. It is used only as
an evaluation benchmark.

Unless stated otherwise, the experiments use the normalized regularization parameter
$\lambda=0.1$, independent Gaussian sketch rows, sketch ratios
$m/d\in\{5,10,15,20,25\}$, and $B=199$ bootstrap replicates. Coverage is
estimated from $300$ Monte Carlo repetitions. All results using the
order-statistic correction set $\delta=0.05$.

The Gaussian approximation uses the empirical influence covariance
\[
  \widehat\Omega_{n,m}^{\mathrm{emp}}
  :=\frac1m\sum_{i=1}^m
  \widehat{\bm\psi}_{n,m,i}\widehat{\bm\psi}_{n,m,i}^{\top}
\]
and reports $m^{-1/2}$ times the $(1-\alpha)$ quantile of the norm of a
centered Gaussian vector with this covariance.

In the main coverage experiment, each Gaussian-approximation error bound is
estimated from $2000$ Gaussian draws. Coverage error bars are $95\%$ Wilson
intervals.

For Gaussian sketches, the code samples each compressed row directly from its
known $(d+1)$-dimensional Gaussian distribution instead of explicitly forming
the dense sketch matrix. These two implementations produce the same
distribution.

Quantiles used to compute bootstrap error bounds and independent-sketch
reference quantiles follow the generalized-inverse convention in
\cref{eq:exact-bootstrap-threshold}. Descriptive medians and $90$th
percentiles across repeated runs, including those in
\cref{tab:linearized-summary}, use linearly interpolated sample percentiles.
These descriptive summaries are not used as error bounds.

\subsection{Coverage of the reported error bound}
\label{sec:coverage-results}

The first question is whether the reported bound attains the nominal coverage
$0.95$. We assess two quantities. \emph{Empirical coverage} is the proportion
of repetitions in which the reported bound is at least as large as the actual
coefficient error. The \emph{relative size of the bound} is the mean reported
bound divided by the independent-sketch reference quantile. A ratio near one
means that the average bound is close to this reference $0.95$ quantile.

\Cref{fig:coverage} compares the bootstrap with refitting, the Gaussian
approximation based on the observed sketch, and the bootstrap with refitting
followed by the order-statistic correction.

Across most sketch sizes, the bootstrap with refitting has coverage close to the
nominal level. Some undercoverage remains when the sketch is small. The
Gaussian approximation generally gives a slightly smaller error bound and
greater undercoverage at the smallest sketch ratios. The order-statistic
correction increases the bound obtained by refitting and produces more
conservative coverage.

\begin{figure}[!t]
\centering
\includegraphics[width=\linewidth]{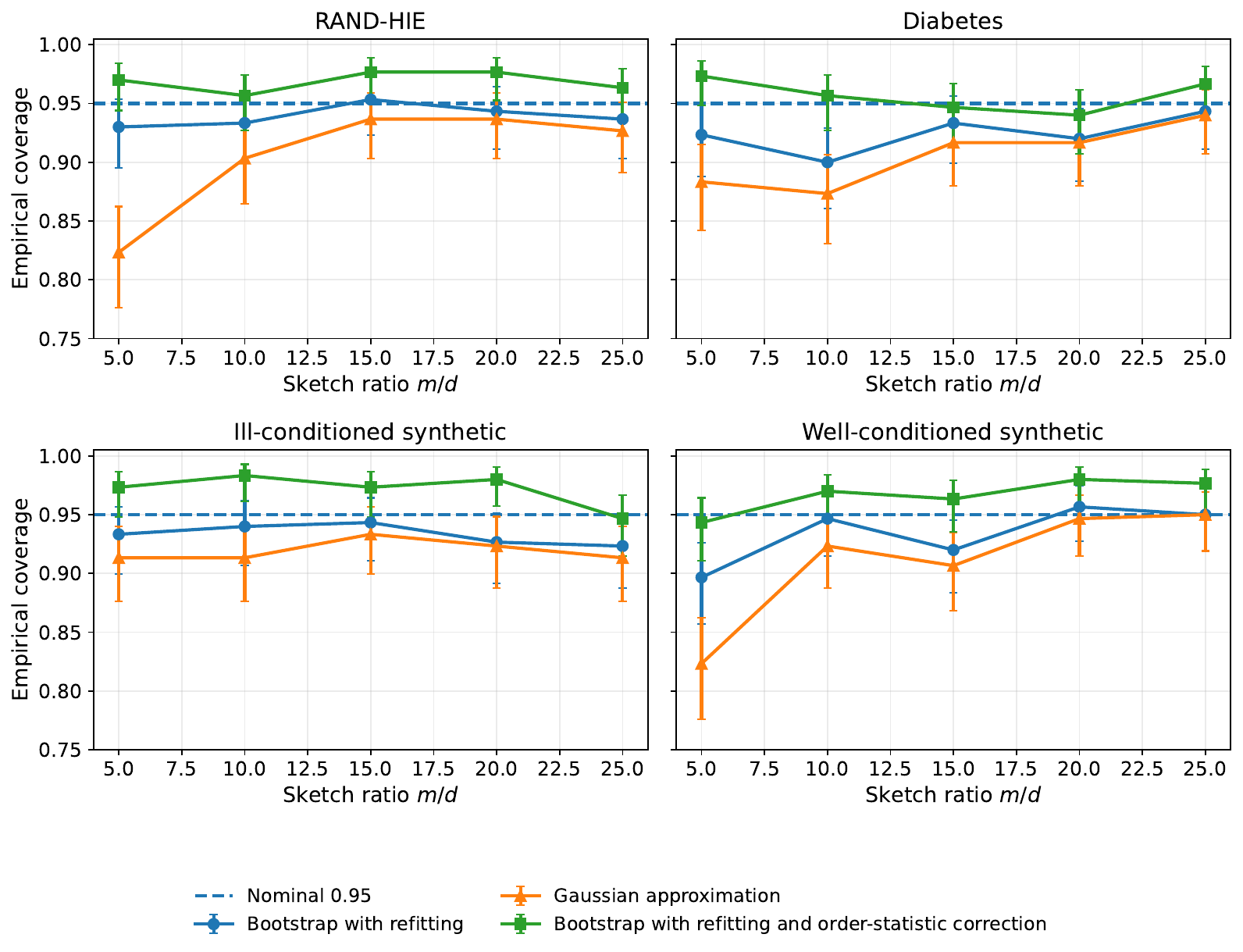}
\caption{Empirical coverage at nominal level $0.95$. The unadjusted and adjusted bootstrap bounds use exactly the same errors obtained by refitting; the adjustment changes only the selected order statistic. The Gaussian approximation uses the empirical influence covariance from the same observed sketch. Error bars are $95\%$ Wilson intervals.}
\label{fig:coverage}
\end{figure}

At $m/d=15$, the bootstrap with refitting has coverage
$0.920$--$0.953$, and its mean bound is within $0.8\%$ of the
independent-sketch reference quantile. The Gaussian approximation has coverage
$0.907$--$0.937$, with a mean bound $1.1\%$--$3.3\%$ below that quantile.
The order-statistic correction raises coverage to $0.947$--$0.977$, with a
mean bound $4.9\%$--$7.3\%$ above that quantile. The unadjusted
bound obtained by refitting is closest in average size to the independent-sketch
reference quantile. The adjusted bound is the most conservative.
\Cref{tab:coverage-summary} gives the data-set-specific values.

\begin{table}[t]
\centering
\caption{Coverage and relative size of the reported error bound at $m/d=15$, $B=199$, and nominal coverage $0.95$. The independent-sketch reference quantile is the $0.95$ quantile of coefficient errors estimated from independent sketches. ``Unadjusted'' uses the empirical quantile of errors obtained by refitting. ``Adjusted'' applies the order-statistic correction to the same errors.}
\label{tab:coverage-summary}
\resizebox{\linewidth}{!}{%
\begin{tabular}{lrrrrrr}
\toprule
& \multicolumn{3}{c}{Empirical coverage} & \multicolumn{3}{c}{Mean bound / independent-sketch quantile} \\
\cmidrule(lr){2-4}\cmidrule(lr){5-7}
Data set & Unadjusted & Gaussian & Adjusted & Unadjusted & Gaussian & Adjusted \\
\midrule
RAND-HIE & 0.953 & 0.937 & 0.977 & 0.999 & 0.967 & 1.073 \\
Diabetes & 0.933 & 0.917 & 0.947 & 0.992 & 0.973 & 1.064 \\
Ill-conditioned synthetic & 0.943 & 0.933 & 0.973 & 1.001 & 0.989 & 1.069 \\
Well-conditioned synthetic & 0.920 & 0.907 & 0.963 & 0.992 & 0.969 & 1.049 \\
\bottomrule
\end{tabular}}
\end{table}

\subsection{Accuracy and speed of the linearized bootstrap}
\label{sec:computational-results}

The second experiment isolates the method used to compute the bootstrap
replicate errors. We compare refitting and linearization using the
same multinomial weights and the same unadjusted empirical quantile.

The comparison uses a synthetic problem with $n=3000$, $d=40$,
$\operatorname{cond}(X^\top X)=10^4$, and $B=149$. In each repetition,
the relative difference between the two reported bounds is
\[
  \frac{
  \left|\widetilde\epsilon_{n,m,B}(\alpha)
  -\widehat\epsilon_{n,m,B}(\alpha)\right|
  }{
  \widehat\epsilon_{n,m,B}(\alpha)
  },
\]
where $\widehat\epsilon_{n,m,B}(\alpha)$ is the bound obtained by refitting.

The two error bounds are close. Across all tested sketch ratios, the median
relative difference is $1.45\%$, and the $90$th percentile is $4.01\%$. The
median ratio of replicate-generation time with refitting to that with
linearization is $2.11$. This ratio excludes the initial fit,
which is required by both methods. Results at each sketch ratio are reported
in \cref{tab:linearized-summary,fig:computation}. Details of the timing
measurements and computing environment are reported in
\appref{sec:supp-experiments}.

\begin{table}[!t]
\centering
\caption{Difference between the bootstrap bounds obtained by refitting and by linearization over $60$ repetitions at each sketch ratio. The runtime ratio is replicate-generation time with refitting divided by the corresponding time with linearization; the common initial fit is excluded.}
\label{tab:linearized-summary}
\begingroup\small\setlength{\tabcolsep}{7pt}
\begin{tabular}{rrrr}
\toprule
$m/d$ & Median relative difference & $90$th percentile & Median runtime ratio \\
\midrule
5  & 2.08\% & 5.26\% & 2.67$\times$ \\
10 & 1.41\% & 4.01\% & 2.11$\times$ \\
20 & 1.05\% & 2.73\% & 1.81$\times$ \\
\bottomrule
\end{tabular}
\endgroup
\end{table}

\begin{figure}[t]
\centering
\includegraphics[width=0.82\linewidth]{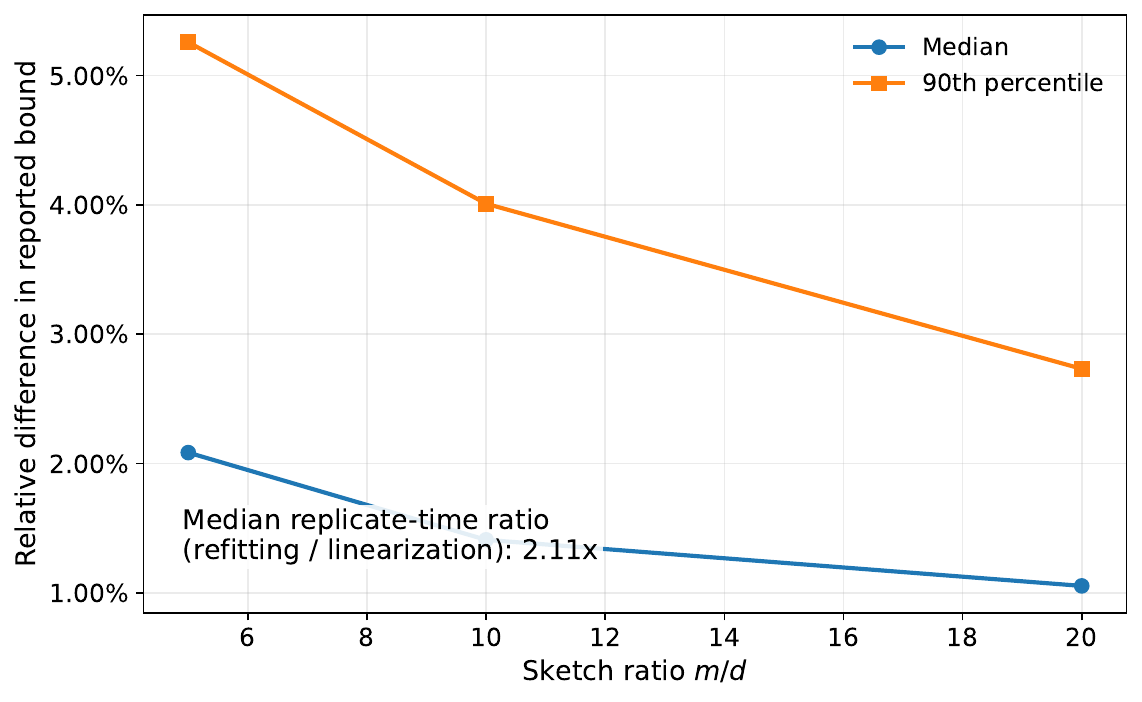}
\caption{Relative difference between the bootstrap bounds obtained by linearization and by refitting. Both methods use the same bootstrap weights and the same unadjusted empirical quantile, so the only difference is whether the ridge problem is refitted in each replicate. The annotation reports the median ratio of replicate-generation time with refitting to that with linearization.}
\label{fig:computation}
\end{figure}

\subsection{Effect of the order-statistic correction}
\label{sec:finite-b-experiment}

The third experiment isolates estimation of the bootstrap quantile. All
replicate errors in this experiment are computed by refitting. For each of
$20$ RAND-HIE sketches at $m/d=15$, we use $50{,}000$ bootstrap errors to
obtain a high-precision estimate of the conditional $0.95$ bootstrap
quantile. This quantity differs from the independent-sketch reference
quantile used in the coverage experiment. Holding the sketch fixed, we then
repeat the quantile calculation $300$ times using a smaller number $B$ of
bootstrap replicates. Each repetition generates one sequence of $499$
bootstrap errors. The result for each smaller value of $B$ uses the
corresponding prefix of that sequence.

We use $B\in\{99,199,499\}$. For $p=0.95$, these values make the rank
$k=\lceil Bp\rceil$ of the unadjusted empirical quantile satisfy
$k/(B+1)=0.95$; the ranks are $95$, $190$, and $475$, respectively.

For these three values of $B$, the unadjusted empirical quantile is below the
$B=50{,}000$ estimate in $0.435$--$0.471$ of the repetitions. After applying
the order-statistic correction, this probability decreases to $0.025$--$0.042$,
which is below the target $\delta=0.05$. Thus, in this experiment, the
correction makes underestimation of the conditional bootstrap quantile rare.

For $B=20$ and $49$, the corresponding probabilities for the ordinary
empirical quantile are $0.730$ and $0.553$. At these values, no order statistic
can satisfy the distribution-free correction rule because
$(1-\alpha)^B>\delta$.

\begin{figure}[t]
\centering
\includegraphics[width=0.82\linewidth]{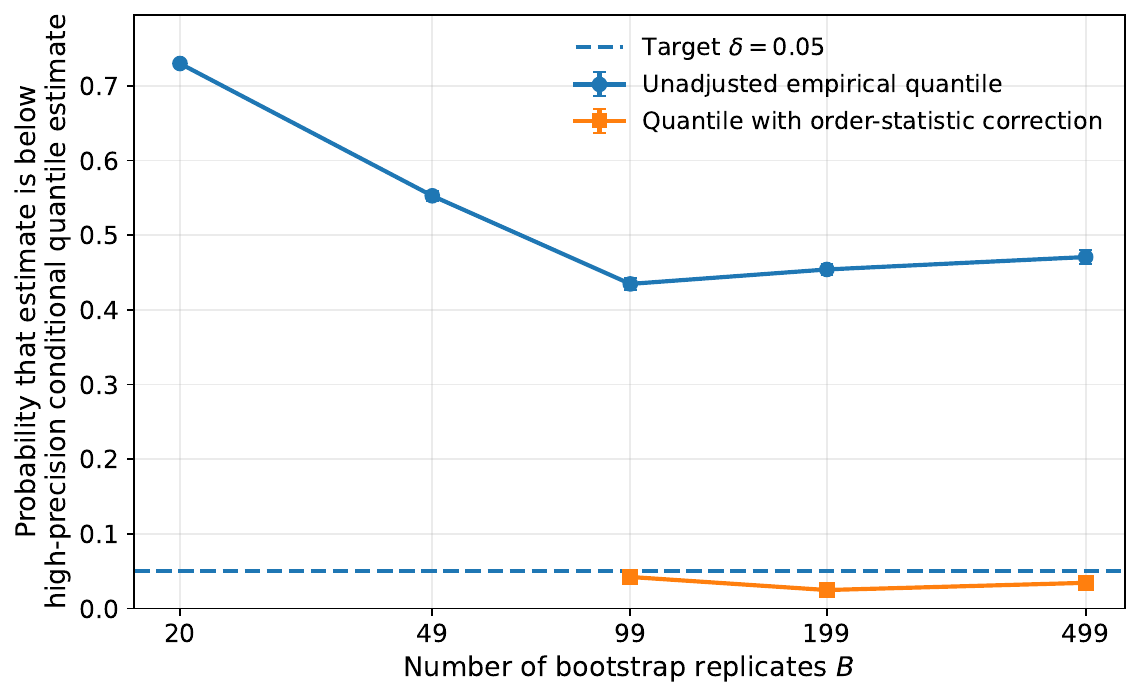}
\caption{Probability that an estimate based on $B$ bootstrap replicates is below the high-precision estimate of the conditional bootstrap quantile based on $50{,}000$ replicates. Both curves use errors obtained by refitting. One curve uses the unadjusted empirical quantile. The other uses the order-statistic correction.}
\label{fig:finite-b-correction}
\end{figure}

\subsection{Effect of a multiple-comparison adjustment on sketch-size selection}

The fourth question is how an adjustment for multiple comparisons across all
candidate sizes affects the selected sketch size. We compare three rules. The
\emph{Bonferroni-adjusted rule} uses nominal noncoverage probability $\alpha/K$
at each of the $K$ candidate sizes, as in \cref{eq:selection-rule}. The
\emph{unadjusted rule} uses nominal noncoverage probability $\alpha$ at every candidate, with no
adjustment for multiple comparisons. The \emph{pilot-based extrapolation} is
defined in \cref{eq:adaptive-m}. It uses the smallest candidate, $m_0=5d$,
as the pilot. The resulting estimate is rounded upward to the smallest
candidate size not below it. No candidate is returned if the estimate exceeds
$30d$.

On RAND-HIE, the candidate ratios are
$m/d\in\{5,10,15,20,25,30\}$ and $B=199$. We construct three tolerance values
by estimating the $0.95$ coefficient-error quantile from independent sketches
at $m/d=10$, $15$, and $20$. For example, when the tolerance is based on
$m/d=15$, it is set equal to the estimated $0.95$ error quantile at that
ratio. For each tolerance, a method selects the smallest candidate whose
reported error bound does not exceed the tolerance. In each repetition, we
generate the largest Gaussian sketch once and use its first $m$ compressed
rows at each candidate size. Bootstrap resampling is performed separately at
each size. Each tolerance
setting uses $300$ repetitions, and each tolerance is estimated from $3000$
independent sketches.

The Bonferroni-adjusted and unadjusted rules use the empirical
bootstrap error bounds defined in \cref{eq:exact-bootstrap-threshold}. The
order-statistic correction in \cref{eq:mc-threshold} is not applied. This
experiment therefore isolates the effect of adjusting for multiple
comparisons.

With $K=6$, $\alpha=0.05$, and $B=199$, no order statistic satisfying
\cref{eq:mc-order} exists at nominal noncoverage probability $\alpha/K$ for $\delta=0.05$, because
$(1-\alpha/K)^B>\delta$. Moreover, because $B=199$ is fixed, this
implementation is not guaranteed to satisfy the assumption in
\cref{thm:simultaneous-selection} that the error bound at each candidate size
has marginal coverage at least $1-\alpha/K$. At nominal noncoverage probability $\alpha/K$, the
generalized-inverse empirical quantile has rank
$\lceil199(1-0.05/6)\rceil=198$. If, hypothetically, the conditional
bootstrap-error distribution were continuous, this order statistic would
cover an additional independent draw with probability only $198/200=0.99$,
which is below $1-0.05/6=0.991\overline{6}$. The results below are therefore an empirical
comparison for a fixed number of bootstrap replicates. They do not establish
finite-sample familywise error control.

We record the fraction of all repetitions in which a method selects a
candidate whose actual coefficient error exceeds the tolerance. This
fraction is smallest for the Bonferroni-adjusted rule, ranging from $0.003$ to
$0.007$. This rule selects larger sketches and occasionally returns no
candidate for the strictest tolerance. The unadjusted rule generally selects
smaller sketches, but the corresponding fraction is higher, ranging from
$0.020$ to $0.040$. Pilot-based extrapolation lies between these two
procedures in this experiment. Thus, adjustment for multiple comparisons
leads to larger selected sketches. This experiment does not compare the total
computational cost of the three procedures.

\begin{figure}[!t]
\centering
\includegraphics[width=\linewidth]{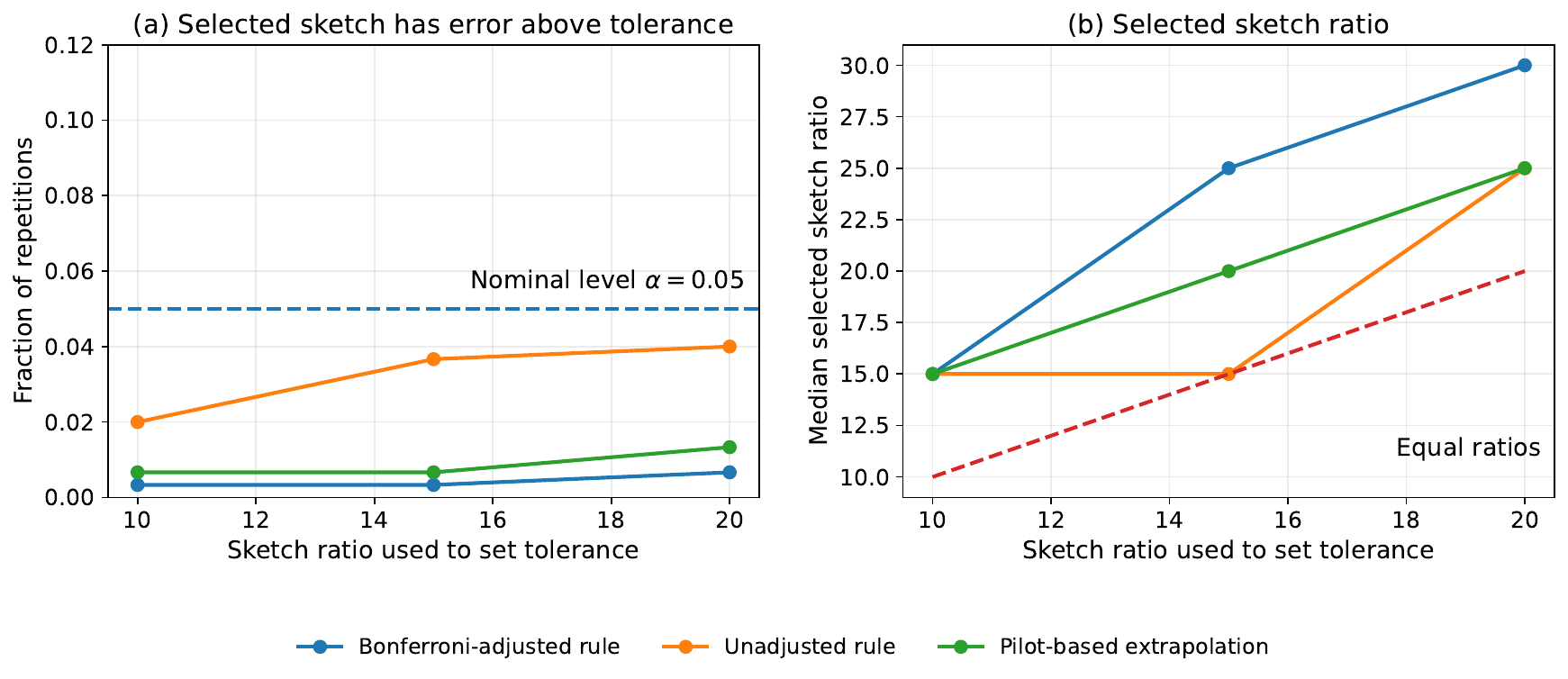}
\caption{Sketch-size selection on RAND-HIE. The horizontal axis gives the
sketch ratio at which the independent-sketch $0.95$ error quantile was
computed to set the tolerance. Left: fraction of all $300$ repetitions in
which a candidate is selected and its actual coefficient error exceeds the
tolerance. Repetitions with no selected candidate are included in the
denominator and not counted in the numerator. Right: median selected ratio
among repetitions in which a candidate was selected.}
\label{fig:certified-selection}
\end{figure}

\begin{table}[t]
\centering
\caption{Sketch-size selection over $300$ repetitions. The selection rate is
the fraction of repetitions in which a candidate is selected. The next column
is the fraction of all repetitions in which a candidate is selected and its
actual coefficient error exceeds the tolerance. The median selected ratio
is computed only from repetitions in which a candidate was selected.}
\label{tab:adaptive-summary}
\begingroup\small\setlength{\tabcolsep}{3pt}
\begin{tabularx}{\linewidth}{r>{\raggedright\arraybackslash}Xccc}
\toprule
\shortstack{Sketch ratio used\\to set tolerance} & Method & \shortstack{Selection\\rate} & \shortstack{Selected with error\\above tolerance} & \shortstack{Median selected\\ratio} \\
\midrule
10 & Bonferroni-adjusted rule & 1.000 & 0.003 & 15 \\
10 & Unadjusted rule & 1.000 & 0.020 & 15 \\
10 & Pilot-based extrapolation & 1.000 & 0.007 & 15 \\
\hline
15 & Bonferroni-adjusted rule & 1.000 & 0.003 & 25 \\
15 & Unadjusted rule & 1.000 & 0.037 & 15 \\
15 & Pilot-based extrapolation & 0.997 & 0.007 & 20 \\
\hline
20 & Bonferroni-adjusted rule & 0.847 & 0.007 & 30 \\
20 & Unadjusted rule & 1.000 & 0.040 & 25 \\
20 & Pilot-based extrapolation & 0.907 & 0.013 & 25 \\
\bottomrule
\end{tabularx}
\endgroup
\end{table}

\section{Discussion}
\label{sec:discussion}

The practical interpretation of the reported error bound depends on four
issues: the accuracy of the bootstrap approximation at a finite sketch size,
the accuracy and cost of linearization, Monte Carlo error in the estimated
quantile, and multiple comparisons across candidate sketch sizes.

\subsection{Interpretation of the bootstrap error bound}

The quantity of interest is $\norm{\bbetahat_{n,m}-\bbeta_n}_2$, the Euclidean
distance between the sketched and full-data ridge solutions. All probability
statements are conditional on the observed full data. They therefore account
for randomness in the sketch and bootstrap resampling, but not for sampling
uncertainty in the original data.

The bootstrap resamples only the compressed design--response rows
from the observed sketch. Under the assumptions of
\cref{thm:bootstrap-consistency,cor:bootstrap-distribution-consistency}, the
conditional distribution of the bootstrap coefficient error consistently
approximates the distribution of the coefficient error due to sketching.
\Cref{thm:exact-coverage} then gives asymptotically exact coverage when the
sketched estimator and bootstrap error bound are computed from the same
sketch. In \cref{sec:coverage-results}, the average bound is close to the
independent-sketch reference quantile at the tested ratios, although some undercoverage remains
at the smallest ratios. Thus, the bootstrap approximation can still have
nonnegligible error when $m$ is small.

\subsection{Linearized computation and the order-statistic correction}

Linearization avoids solving a new ridge problem for every bootstrap replicate
by approximating the coefficient change to first order.
\Cref{prop:linearized-equivalence} shows that, for any fixed bootstrap
replicate, the difference between the changes obtained by linearization and by refitting is
$o_p^*(m_n^{-1/2})$ in $\PS$-probability. In
\cref{sec:computational-results}, the two error bounds
are close, and generating the linearized replicates is faster. The measured
speedup depends on the implementation and problem dimensions.

After the replicate errors have been computed, the order-statistic correction
uses a higher order statistic to reduce the probability of underestimating
the exact conditional bootstrap quantile. For errors obtained by refitting,
\cref{prop:mc-protected} states that, conditional on the observed sketch, the
adjusted error bound is at least this quantile with probability at least
$1-\delta$ over the bootstrap draws, whenever the required order statistic
exists. This result controls Monte Carlo error caused by using finitely many
bootstrap replicates. It does not remove the asymptotic approximation error
between the bootstrap distribution and the distribution of the coefficient
error due to sketching. The experiment in \cref{sec:finite-b-experiment}
verifies the intended one-sided behavior and shows the resulting increase in
the error bound. When $(1-\alpha)^B>\delta$, no order statistic satisfies the
correction rule in \cref{eq:mc-order}.

\subsection{Sketch-size selection and sketch comparison}

Examining several candidate sketch sizes requires an adjustment for multiple
comparisons because the selected size depends on the error bounds computed at
all candidates. \Cref{thm:simultaneous-selection} applies a Bonferroni
adjustment to a fixed finite grid and asymptotically controls the probability
of selecting a candidate whose coefficient error exceeds the tolerance. In
the numerical comparison, the Bonferroni-adjusted rule gives the smallest such
probability, but selects larger sketches and occasionally returns no candidate
under the strictest tolerance. The theorem assumes marginal coverage of at
least $1-\alpha/K$ separately at every candidate size. For error bounds based
on unadjusted empirical bootstrap quantiles, this condition is obtained asymptotically by letting the number of
bootstrap replicates tend to infinity at each candidate size. Because the
experiment fixes $B$, it provides an empirical comparison rather than
finite-sample familywise error control. The unadjusted rule and
pilot-based extrapolation are included for comparison.

The covariance formula in \cref{eq:gamma} shows how the covariance of the
leading error term depends on the residuals, design matrix, regularization parameter, and
fourth moment of the sketch entries. \Cref{cor:rademacher-optimal} shows that
Rademacher entries minimize this covariance in the Loewner order among
sketches with i.i.d. standardized entries. This asymptotic result does
not cover uniform row sampling or structured sketches. Those methods and
finite-sample comparisons require separate analysis. The modest differences in \cref{fig:sketch-comparison} do
not contradict the ordering, because an asymptotic comparison of leading terms
need not produce large finite-sample differences.

\subsection{Scope and possible extensions}

The proofs use a fixed-dimensional asymptotic regime and i.i.d. isotropic sketch rows. Proportional high-dimensional regimes can exhibit different bootstrap behavior \citep{clarte2024bootstrap}. Structured transforms such as SRHT and CountSketch induce dependence among compressed rows and would require resampling schemes that preserve that dependence.

The quantity studied here is Euclidean coefficient error relative to the full-data ridge solution. Prediction error, objective-value error, and selected linear functionals would require separate influence representations and coverage arguments. Population-sampling uncertainty and floating-point error add further layers beyond the fixed-data, exact-arithmetic analysis. Sequential or anytime-valid methods may also reduce the conservativeness of Bonferroni selection over a finite grid.

\section{Conclusion}
\label{sec:conclusion}

Building on the bootstrap of compressed design--response pairs for randomized least squares, this paper
establishes the corresponding validity theory for sketched ridge regression
with a varying normalized regularization parameter. Under i.i.d. isotropic sketch rows,
the theory gives an asymptotic linear representation, Gaussian limits for the
sketched estimator and conditional bootstrap distribution, uniform
consistency of the bootstrap distribution, and asymptotically exact
coverage when the estimator and error bound use the same sketch. An explicit
covariance formula also shows that Rademacher entries minimize the covariance
of the leading error term in the Loewner order among sketches with i.i.d.
standardized entries.

For practical computation, the linearized bootstrap approximates bootstrap
refitting without solving a new ridge problem in every replicate. The
order-statistic correction controls underestimation of the exact conditional
bootstrap quantile. The Bonferroni adjustment accounts for selection over a
fixed grid of candidate sketch sizes. In the numerical experiments, the mean
error bound obtained by refitting is close to the coefficient-error quantile
estimated from independent sketches. The bounds obtained by linearization and
refitting are also close, while linearized replicate generation is faster.
The Bonferroni-adjusted rule selects a candidate whose actual error exceeds
the tolerance less often than the unadjusted rule, at the cost of
larger selected sketches.

\section*{Data and materials availability}

The LaTeX source accompanies this submission. No separate code or data archive
is provided. The numerical experiments use two data sets distributed with
Python packages and two generated synthetic data sets; no manual data download
is required. \appref{sec:source-experiments} documents historical benchmark
settings whose original scripts and outputs are unavailable. Those unavailable
artifacts are not presented as reproducible outputs.

\section*{Declarations}

\textbf{Conflict of interest.} The authors declare no conflict of interest.

\bibliographystyle{plainnat}
\bibliography{references}


\appendix
\numberwithin{equation}{section}
\numberwithin{figure}{section}
\numberwithin{table}{section}
\numberwithin{algorithm}{section}

\section{Notation used in the proofs}
\label{app:proofs}

Throughout this appendix, all asymptotic limits are taken with
\[
  n\to\infty,
  \qquad
  d\ \text{fixed},
  \qquad
  m=m_n\to\infty.
\]
The additional condition $B=B_n\to\infty$ is used only for results involving
the empirical bootstrap distribution or its unadjusted empirical quantile.
For convenience, we restate the notation used repeatedly in the proofs. Each
citation points to the equation where the quantity is first defined in the
main text.

\paragraph{Basic operations and probability.}
For $\bm x=(x_1,\ldots,x_k)^\top\in\R^k$,
$M=(M_{j\ell})\in\R^{r\times k}$, an event $\mathcal A$, and
$\bm a,\bm b\in\R^k$,
\[
  \norm{\bm x}_2:=\bigg(\sum_{j=1}^kx_j^2\bigg)^{1/2},
  \qquad
  \norm{M}_{\mathrm F}:=\bigg(\sum_{j=1}^r\sum_{\ell=1}^kM_{j\ell}^2\bigg)^{1/2},
  \qquad
  \norm{M}_{\op}:=\sup_{\substack{\bm x\in\R^k\\\norm{\bm x}_2=1}}\norm{M\bm x}_2,
\]
\[
  \mathbf1\{\mathcal A\}:=
  \begin{cases}
    1,&\mathcal A\text{ occurs},\\
    0,&\mathcal A\text{ does not occur},
  \end{cases}
  \qquad
  I_k:=\bigl(\mathbf1\{j=\ell\}\bigr)_{j,\ell=1}^k,
  \qquad
  \bm e_j:=\bigl(\mathbf1\{\ell=j\}\bigr)_{\ell=1}^k,
\]
\[
  \diag(\bm a):=\bigl(a_j\mathbf1\{j=\ell\}\bigr)_{j,\ell=1}^k,
  \qquad
  \bm a\odot\bm b:=\bigl(a_jb_j\bigr)_{j=1}^k.
\]
For $N=(N_{j\ell})\in\R^{k\times k}$,
\[
  \tr(N):=\sum_{j=1}^kN_{jj},
  \qquad
  \rank(N):=\dim\{N\bm x:\bm x\in\R^k\}.
\]
For symmetric $N,L\in\R^{k\times k}$,
\[
  \lambda_{\min}(N):=\min_{\substack{\bm x\in\R^k\\\norm{\bm x}_2=1}}\bm x^\top N\bm x,
  \qquad
  N\succeq L
  \quad\Longleftrightarrow\quad
  \bm x^\top(N-L)\bm x\geq0
  \quad\text{for every }\bm x\in\R^k.
\]
Let $\Pp$ be the underlying probability measure.  For a square-integrable
random vector $\bm Y$,
\[
  \E\bm Y:=\int\bm Y\,d\Pp,
  \qquad
  \operatorname{Var}(\bm Y)
  :=\E\bigl[(\bm Y-\E\bm Y)(\bm Y-\E\bm Y)^\top\bigr].
\]

\paragraph{Full-data quantities.}
The definitions in \cref{eq:full-ridge,eq:full-moments} are
\[
  X_n\in\R^{n\times d},
  \qquad
  \by_n\in\R^n,
  \qquad
  \lambda_n>0,
\]
\[
  \bbeta_n
  :=\argmin_{\bbeta\in\R^d}
  \left\{
    \frac1n\norm{X_n\bbeta-\by_n}_2^2
    +\lambda_n\norm{\bbeta}_2^2
  \right\},
\]
\[
  H_n:=\frac1nX_n^\top X_n,
  \qquad
  \bg_n:=\frac1nX_n^\top\by_n,
  \qquad
  A_n:=(H_n+\lambda_nI_d)^{-1},
  \qquad
  \bbeta_n=A_n\bg_n.
\]
The residual and the full-data normal equation used throughout the
proofs are
\[
  \br_n:=\by_n-X_n\bbeta_n,
  \qquad
  (H_n+\lambda_nI_d)\bbeta_n=\bg_n,
  \qquad
  \frac1nX_n^\top\br_n=\lambda_n\bbeta_n.
\]
The limits and uniform invertibility condition in \cref{ass:regime} are
\[
  \lambda_n\to\lambda_\infty\geq0,
  \qquad
  H_n\to H,
  \qquad
  \bg_n\to\bg,
  \qquad
  A:=(H+\lambda_\infty I_d)^{-1},
  \qquad
  \bbeta:=A\bg,
\]
\[
  \exists\,c_0>0,\ \exists\,n_0\in\mathbb N:
  \quad
  \lambda_{\min}(H_n+\lambda_nI_d)\geq c_0
  \quad\text{for all }n\geq n_0.
\]

\paragraph{Sketch probability space and compressed quantities.}
For each $n$ and $m$,
\[
  \bs_{n,1},\ldots,\bs_{n,m}\in\R^n\ \text{are i.i.d.},
  \qquad
  \mathcal S_{n,m}:=\sigma(\bs_{n,1},\ldots,\bs_{n,m}),
  \qquad
  \mathcal S_n:=\mathcal S_{n,m_n}. 
\]
Here, $\sigma(\bs_{n,1},\ldots,\bs_{n,m})$ is the smallest sigma-algebra with respect to
which all sketch rows $\bs_{n,1},\ldots,\bs_{n,m}$ are measurable.
For $\mathcal A\in\mathcal S_{n,m}$ and every integrable
$\mathcal S_{n,m}$-measurable random variable $Z$,
\[
  \PS(\mathcal A):=\Pp(\mathcal A),
  \qquad
  \ES Z:=\E Z,
  \qquad
  \ES(\bs_{n,i}\bs_{n,i}^\top)=I_n.
\]
The definitions in
\cref{eq:sketch-matrix,eq:compressed-rows,eq:sketched-moments,eq:sketched-ridge,eq:sketched-residuals}
are
\[
  S_{n,m}:=\frac1{\sqrt m}
  \begin{bmatrix}
    \bs_{n,1}^\top\\[-1mm]
    \vdots\\[-1mm]
    \bs_{n,m}^\top
  \end{bmatrix}\in\R^{m\times n},
  \qquad
  \bz_{n,i}:=\frac1{\sqrt n}X_n^\top\bs_{n,i}\in\R^d,
  \qquad
  u_{n,i}:=\frac1{\sqrt n}\by_n^\top\bs_{n,i}\in\R,
\]
\[
  \widehat H_{n,m}:=\frac1m\sum_{i=1}^m\bz_{n,i}\bz_{n,i}^\top\in\R^{d\times d},
  \qquad
  \widehat\bg_{n,m}:=\frac1m\sum_{i=1}^m\bz_{n,i}u_{n,i}\in\R^d,
\]
\[
  \widehat A_{n,m}:=(\widehat H_{n,m}+\lambda_nI_d)^{-1}\in\R^{d\times d},
  \qquad
  \bbetahat_{n,m}:=\widehat A_{n,m}\widehat\bg_{n,m}\in\R^d,
\]
\[
  \widehat r_{n,m,i}:=u_{n,i}-\bz_{n,i}^\top\bbetahat_{n,m}\in\R.
\]
The sketched normal equation is
\[
  (\widehat H_{n,m}+\lambda_nI_d)\bbetahat_{n,m}
  =\widehat\bg_{n,m}.
\]

\paragraph{Conditional probability for bootstrap resampling.}
For every event $\mathcal A$ and every integrable random variable $Z$ defined
on the joint probability space for the sketch and bootstrap resampling,
\[
  \Pstar(\mathcal A):=\Pp(\mathcal A\mid\mathcal S_{n,m}),
  \qquad
  \Estar Z:=\E(Z\mid\mathcal S_{n,m}).
\]
In asymptotic statements, the conditioning sigma-algebra is $\mathcal S_n$.

\paragraph{Bootstrap quantities computed by refitting.}
For $b=1,\ldots,B$, the definitions in
\cref{eq:multinomial-weights,eq:weighted-moments,eq:exact-bootstrap-estimator,eq:exact-bootstrap-threshold}
are
\[
  \bw_1,\ldots,\bw_B\mid\mathcal S_{n,m}
  \overset{\mathrm{i.i.d.}}{\sim}
  \operatorname{Multinomial}
  \left(m;\frac1m,\ldots,\frac1m\right),
  \qquad
  \bw_b:=(w_{b,1},\ldots,w_{b,m})^\top,
\]
\[
  \widehat H_{n,m,b}^*
  :=\frac1m\sum_{i=1}^m w_{b,i}\bz_{n,i}\bz_{n,i}^\top,
  \qquad
  \widehat\bg_{n,m,b}^*
  :=\frac1m\sum_{i=1}^m w_{b,i}\bz_{n,i}u_{n,i},
\]
\[
  \bbetastar_{n,m,b}
  :=(\widehat H_{n,m,b}^*+\lambda_nI_d)^{-1}
    \widehat\bg_{n,m,b}^*,
  \qquad
  e_{n,m,b}^*:=\norm{\bbetastar_{n,m,b}-\bbetahat_{n,m}}_2,
\]
\[
  \widehat\epsilon_{n,m,B}(\alpha)
  :=\inf\left\{t\geq0:
  \frac1B\sum_{b=1}^B\mathbf1\{e_{n,m,b}^*\leq t\}
  \geq1-\alpha\right\}.
\]
For a distribution function $F$ and $p\in(0,1)$, the quantile convention is
\[
  F^{-1}(p):=\inf\{t\in\R:F(t)\geq p\}.
\]

\paragraph{Linearized bootstrap quantities.}
The definitions in
\cref{eq:linearized-delta,eq:empirical-influence,eq:linearized-as-influence,eq:linearized-error}
are
\[
  \widetilde\Delta_{n,m,b}^*
  :=\widehat A_{n,m}
  \left\{
    (\widehat\bg_{n,m,b}^*-\widehat\bg_{n,m})
    -(\widehat H_{n,m,b}^*-\widehat H_{n,m})\bbetahat_{n,m}
  \right\},
\]
\[
  \widehat{\bm\psi}_{n,m,i}
  :=\widehat A_{n,m}
  \left\{\bz_{n,i}\widehat r_{n,m,i}
  -\lambda_n\bbetahat_{n,m}\right\},
  \qquad i=1,\ldots,m,
\]
\[
  \frac1m\sum_{i=1}^m\widehat{\bm\psi}_{n,m,i}=\bzero,
  \qquad
  \widetilde\Delta_{n,m,b}^*
  =\frac1m\sum_{i=1}^m(w_{b,i}-1)\widehat{\bm\psi}_{n,m,i},
\]
\[
  \widetilde e_{n,m,b}^*:=\norm{\widetilde\Delta_{n,m,b}^*}_2,
  \qquad
  \widetilde\epsilon_{n,m,B}(\alpha)
  :=\inf\left\{t\geq0:
  \frac1B\sum_{b=1}^B\mathbf1\{\widetilde e_{n,m,b}^*\leq t\}
  \geq1-\alpha\right\}.
\]

\paragraph{Influence vectors and their covariance.}
The theoretical influence vector defined in
\cref{eq:derivative-map,eq:single-influence} is
\begin{align*}
  \bm\psi_{n,i}
  &:=A_n\left\{\bz_{n,i}
  (u_{n,i}-\bz_{n,i}^\top\bbeta_n)
  -\lambda_n\bbeta_n\right\}
  =\frac1nA_nX_n^\top
  (\bs_{n,i}\bs_{n,i}^\top-I_n)\br_n.
\end{align*}  
It satisfies
\begin{align*}
  \ES\bm\psi_{n,i}&=\bzero.
\end{align*}  
Its covariance, defined in \cref{eq:omega}, is
\begin{align*}
  \Omega_n
  &:=\ES\left[
  (\bm\psi_{n,1}-\ES\bm\psi_{n,1})
  (\bm\psi_{n,1}-\ES\bm\psi_{n,1})^\top
  \right]
  =\ES(\bm\psi_{n,1}\bm\psi_{n,1}^\top),
  \qquad
  \Omega_n\to\Omega.
\end{align*}
For sketches with i.i.d. entries in \cref{prop:covariance},
\[
  \E s=0,
  \qquad
  \E s^2=1,
  \qquad
  \E|s|^4<\infty,
  \qquad
  \kappa_s:=\E s^4-3.
\]

\paragraph{Conditional resampling representation.}
For the first bootstrap replicate at $m=m_n$, define
\[
  I_{n,1}^*,\ldots,I_{n,m_n}^*
  \mid\mathcal S_n
  \overset{\mathrm{i.i.d.}}{\sim}
  \operatorname{Uniform}\{1,\ldots,m_n\},
  \qquad
  w_{1,i}:=\sum_{j=1}^{m_n}\mathbf1\{I_{n,j}^*=i\},
  \quad i=1,\ldots,m_n.
\]
Then
\[
  (w_{1,1},\ldots,w_{1,m_n})^\top
  \mid\mathcal S_n
  \sim\operatorname{Multinomial}
  \left(m_n;\frac1{m_n},\ldots,\frac1{m_n}\right).
\]

\paragraph{Distribution functions and limiting quantile.}
The definitions in
\cref{eq:ideal-distribution-functions,eq:empirical-bootstrap-cdf} are
\[
  F_{n,m_n}(t)
  :=\PS\!\left\{
  \sqrt{m_n}\norm{\bbetahat_{n,m_n}-\bbeta_n}_2\leq t
  \right\},
\]
\[
  \widehat F_{n,m_n}(t)
  :=\Pstar\!\left\{
  \sqrt{m_n}\norm{\bbetastar_{n,m_n,1}-\bbetahat_{n,m_n}}_2\leq t
  \right\},
\]
\[
  \widehat F_{n,m_n,B_n}(t)
  :=\frac1{B_n}\sum_{b=1}^{B_n}
  \mathbf1\!\left\{\sqrt{m_n}\,e_{n,m_n,b}^*\leq t\right\}.
\]
The Gaussian limit and its $(1-\alpha)$-quantile used in
\appref{app:distribution-proof} and \appref{app:coverage-proof} are
\[
  \bm Z\sim N(\bzero,\Omega),
  \qquad
  G(t):=\Pp\{\norm{\bm Z}_2\leq t\},
  \qquad
  q:=G^{-1}(1-\alpha)
  =\inf\{t\in\R:G(t)\geq1-\alpha\}.
\]

\paragraph{Conditional bootstrap quantile and one-sided order-statistic correction.}
For $p:=1-\alpha$, the conditional $p$-quantile defined in
\cref{sec:mc-protection} is
\[
  q_{n,m}^*(p)
  :=\inf\left\{t\geq0:
  \Pstar\!\left(
  \norm{\bbetastar_{n,m,1}-\bbetahat_{n,m}}_2\leq t
  \right)\geq p\right\}.
\]
This is the exact conditional quantile of the bootstrap error before it is
estimated from finitely many bootstrap replicates. The rank used for the
one-sided correction and the resulting adjusted error bound are defined in
\cref{eq:mc-order,eq:mc-threshold}:
\[
  k_B(p,\delta)
  :=\min\left\{k\in\{1,\ldots,B\}:
  \Pp\{\operatorname{Bin}(B,p)\leq k-1\}\geq1-\delta
  \right\},
\]
with $k_B(p,\delta):=\infty$ when the displayed set is empty, and
\[
  e_{n,m,(1)}^*\leq\cdots\leq e_{n,m,(B)}^*,
  \qquad
  \widehat\epsilon_{n,m,B}^{\mathrm{MC}}(\alpha,\delta)
  :=
  \begin{cases}
    e_{n,m,(k_B(1-\alpha,\delta))}^*,
      &k_B(1-\alpha,\delta)<\infty,\\
    +\infty,&k_B(1-\alpha,\delta)=\infty.
  \end{cases}
\]

\paragraph{Selection over a finite grid.}
For each $n$, let $m_{n,1}<\cdots<m_{n,K}$ denote the ordered grid in
\cref{thm:simultaneous-selection}. The index $j$ identifies a sketch size,
and $\widehat j_n=0$ means that no sketch size is selected. For
$j=1,\ldots,K$,
\[
  m_{n,j}\to\infty,
  \qquad
  \bbetahat_{n,j}:=\bbetahat_{n,m_{n,j}},
\]
\[
  \liminf_{n\to\infty}
  \Pp\left\{
  \norm{\bbetahat_{n,j}-\bbeta_n}_2
  \leq\widehat\epsilon_{n,j}(\alpha/K)
  \right\}
  \geq1-\frac\alpha K,
\]
\[
  \widehat j_n
  :=
  \begin{cases}
    \displaystyle
    \min\left\{j\in\{1,\ldots,K\}:
    \widehat\epsilon_{n,j}(\alpha/K)\leq\tau\right\},
    &\text{if the set is nonempty},\\[2mm]
    0,&\text{if the set is empty}.
  \end{cases}
\]

\paragraph{Stochastic orders and convergence.}
Let $a_n>0$ be deterministic.  For random vectors or matrices $\bm Y_n$ and
bootstrap random vectors or matrices $\bm Y_n^*$, with $\norm{\cdot}$
denoting the relevant norm,
\[
  \bm Y_n=O_p(a_n)
  \quad\Longleftrightarrow\quad
  \forall\eta>0\ \exists M<\infty:
  \limsup_{n\to\infty}
  \Pp\{\norm{\bm Y_n}>Ma_n\}\leq\eta,
\]
\[
  \bm Y_n=o_p(a_n)
  \quad\Longleftrightarrow\quad
  \forall\varepsilon>0:
  \Pp\{\norm{\bm Y_n}>\varepsilon a_n\}\to0,
\]
\[
  \bm Y_n^*=O_p^*(a_n)\ \text{in }\PS\text{-probability}
  \quad\Longleftrightarrow\quad
  \forall\eta>0\ \exists M<\infty:
  \PS\!\left\{
  \Pstar(\norm{\bm Y_n^*}>Ma_n)>\eta
  \right\}\to0,
\]
\[
  \bm Y_n^*=o_p^*(a_n)\ \text{in }\PS\text{-probability}
  \quad\Longleftrightarrow\quad
  \forall\varepsilon>0\ \forall\eta>0:
  \PS\!\left\{
  \Pstar(\norm{\bm Y_n^*}>\varepsilon a_n)>\eta
  \right\}\to0.
\]
For random vectors $\bm Y_n,\bm Y$,
\[
  \bm Y_n\pto\bm Y
  \quad\Longleftrightarrow\quad
  \forall\varepsilon>0:
  \Pp\{\norm{\bm Y_n-\bm Y}>\varepsilon\}\to0,
\]
\[
  \bm Y_n\dto\bm Y
  \quad\Longleftrightarrow\quad
  \E h(\bm Y_n)\to\E h(\bm Y)
  \quad\text{for every bounded continuous }h.
\]
Conditional weak convergence in $\PS$-probability is denoted by
\[
  \bm Y_n^*\dto\bm Y
  \quad\text{conditionally in }\PS\text{-probability},
\]
and is defined by
\[
  \Estar h(\bm Y_n^*)\pto\E h(\bm Y)
  \quad\text{for every bounded Lipschitz }h.
\]
\section{\texorpdfstring{Proof of \Cref{lem:concrete-conditions}: sufficient conditions for sketch-row regularity}{Proof of sufficient conditions for sketch-row regularity}}
\label{app:sufficient-conditions}

\begin{proof}[Proof of \Cref{lem:concrete-conditions}]
Recall the full-data quantities in \cref{eq:full-ridge,eq:full-moments},
the compressed rows in \cref{eq:compressed-rows}, and the notation in
\appref{app:proofs}. Let
\[
  \bs:=(s_1,\ldots,s_n)^\top\overset{d}{=}\bs_{n,1},
  \qquad
  C=C^\top\in\R^{d\times d},
  \qquad
  \bm c\in\R^d.
\]
Here, $C$ and $\bm c$ are arbitrary. The corresponding scalar linear
combination of the centered compressed moments is
\begin{equation}
  \frac1n\left[
  \bs^\top X_nCX_n^\top\bs-\tr(X_nCX_n^\top)
  +(\bs^\top X_n\bm c)(\bs^\top\by_n)-\bm c^\top X_n^\top\by_n
  \right].
  \label{eq:scalar-quadratic-form}
\end{equation}
Because a quadratic form depends only on the symmetric part of its matrix,
the expression in \cref{eq:scalar-quadratic-form} is the centered quadratic
form associated with the symmetric matrix
\begin{align*}
  X_n C X_n^\top
  +
  \frac{
    X_n\bm c\,\by_n^\top
    +
    \by_n\bm c^\top X_n^\top
  }{2}.
\end{align*}
For any symmetric matrix $K\in\mathbb R^{n\times n}$ and any random vector
$\bs$ with independent and identically distributed entries satisfying
\[
  \E[s_1]=0,
  \qquad
  \E[s_1^2]=1,
  \qquad
  \E[s_1^4]<\infty,
\]
the standard variance identity for quadratic forms is
\begin{align*}
  \mathrm{Var}\!\left[
    \bs^\top K\bs-\tr K
  \right]
  =
  2\tr(K^2)
  +
  \bigl\{\E[s_1^4]-3\bigr\}
  \sum_{j=1}^n K_{jj}^2.
\end{align*}
Applying this identity with
\[
  K
  =
  X_n C X_n^\top
  +
  \frac{
    X_n\bm c\,\by_n^\top
    +
    \by_n\bm c^\top X_n^\top
  }{2}
\]
and accounting for the factor $1/n$ in
\cref{eq:scalar-quadratic-form}, we obtain
\begin{align}
  &\phantom{=}\mathrm{Var}\!\left[
    \frac{1}{n}
    \left\{
      \bs^\top K\bs-\tr K
    \right\}
  \right] \nonumber \\
  &=
  \frac{2}{n^2}
  \tr\left(
    \left[
      X_n C X_n^\top
      +
      \frac{
        X_n\bm c\,\by_n^\top
        +
        \by_n\bm c^\top X_n^\top
      }{2}
    \right]^2
  \right) \nonumber\\
  &\quad+
  \frac{\E[s_1^4]-3}{n^2}
  \sum_{j=1}^n
  \left[
    \bm e_j^\top
    \left\{
      X_n C X_n^\top
      +
      X_n\bm c\,\by_n^\top
    \right\}
    \bm e_j
  \right]^2.
  \label{eq:scalar-quadratic-variance}
\end{align}
In the second term, the symmetrization can be omitted because the two cross
terms have the same diagonal entries:
\begin{align*}
  \bm e_j^\top
  \frac{
    X_n\bm c\,\by_n^\top
    +
    \by_n\bm c^\top X_n^\top
  }{2}
  \bm e_j
  =
  \bm e_j^\top
  X_n\bm c\,\by_n^\top
  \bm e_j.
\end{align*}
Thus, \cref{eq:diagonal-condition} implies convergence of the second term in
\cref{eq:scalar-quadratic-variance}.
Expanding the trace in the first term gives
\begin{align*}
  &\phantom{=}\frac1{n^2}\tr\left[
     X_nCX_n^\top+
     \frac{X_n\bm c\,\by_n^\top+\by_n\bm c^\top X_n^\top}{2}
     \right]^2\\
  &=\tr(CH_nCH_n)+2\bg_n^\top CH_n\bm c
    +\frac12\left\{(\bm c^\top\bg_n)^2
    +\frac{\by_n^\top\by_n}{n}\,\bm c^\top H_n\bm c\right\},
\end{align*}
which converges under the stated assumptions. Hence the variance of every
scalar projection converges. Since the dimension is fixed, the polarization
identity implies convergence of the covariance matrix. Finally,
$\bm\psi_{n,1}=A_n\{(\bz_{n,1}u_{n,1}-\bg_n)
-(\bz_{n,1}\bz_{n,1}^\top-H_n)\bbeta_n\}$. Combining this covariance
convergence with $A_n\to A$ and $\bbeta_n\to\bbeta$ from
\cref{ass:regime} proves that $\Omega_n$ converges as required in
\cref{eq:omega}.

By \citet[Lemma~B.26]{bai2010spectral} and the eighth-moment assumption,
there is a constant $C_0<\infty$ such that the fourth moment of
\cref{eq:scalar-quadratic-form} satisfies
\begin{equation}
\begin{aligned}
  \E\left|
    \frac1n\{\bs^\top K\bs-\tr K\}
  \right|^4
  &\leq \frac{C_0}{n^4}
  \left[\{\tr(K^2)\}^2+\tr(K^4)\right]\\
  &\leq \frac{2C_0}{n^4}\{\tr(K^2)\}^2
  =O(1),
\end{aligned}
  \label{eq:quadratic-fourth-bound}
\end{equation}
where $\tr(K^4)\leq\{\tr(K^2)\}^2$ because $K$ is symmetric, and the last
step follows from the preceding trace expansion. Applying
this bound to the finitely many matrix and vector coordinates yields
\cref{eq:design-regularity}.  Hence every part of \cref{ass:sketch} holds.
\end{proof}

\section{\texorpdfstring{Proof of \Cref{prop:covariance}: linearization and covariance}{Proof of linearization and covariance}}
\label{app:covariance-proof}

\begin{proof}[Proof of \Cref{prop:covariance}]
Recall the full-data and sketched moments in
\cref{eq:full-moments,eq:moment-vector}, the estimators in
\cref{eq:full-ridge,eq:sketched-ridge}, and the influence vector
$\bm\psi_{n,i}$ in \cref{eq:derivative-map,eq:single-influence}. We first
establish the asymptotic linear representation. By
\cref{eq:moment-vector}, each moment error is an average of independent,
centered random matrices or vectors of fixed dimension. The uniform
fourth-moment bound in \cref{ass:sketch} therefore implies
\begin{align*}
  \ES\norm{\widehat H_{n,m_n}-H_n}_{\mathrm F}^{2}
  &=\frac{1}{m_n}\ES
    \norm{\bz_{n,1}\bz_{n,1}^{\top}-H_n}_{\mathrm F}^{2}
    =O(m_n^{-1}),\\
  \ES\norm{\widehat\bg_{n,m_n}-\bg_n}_{2}^{2}
  &=\frac{1}{m_n}\ES
    \norm{\bz_{n,1}u_{n,1}-\bg_n}_{2}^{2}
    =O(m_n^{-1}).
\end{align*}
Consequently,
\begin{equation}
  \norm{\widehat H_{n,m_n}-H_n}_{\op}=O_p(m_n^{-1/2}),
  \qquad
  \norm{\widehat\bg_{n,m_n}-\bg_n}_2=O_p(m_n^{-1/2}),
  \label{eq:target-moment-rates}
\end{equation}
where we used
$\norm{M}_{\op}\leq\norm{M}_{\mathrm F}$.

The full-data and sketched normal equations are
\[
  (H_n+\lambda_nI_d)\bbeta_n=\bg_n,
  \qquad
  (\widehat H_{n,m_n}+\lambda_nI_d)
  \bbetahat_{n,m_n}=\widehat\bg_{n,m_n}.
\]
Subtracting the first equation from the second and collecting the terms
that multiply $\bbetahat_{n,m_n}-\bbeta_n$ gives
\begin{equation}
\begin{aligned}
  \bbetahat_{n,m_n}-\bbeta_n
  ={}&(\widehat H_{n,m_n}+\lambda_nI_d)^{-1}
  \bigl[(\widehat\bg_{n,m_n}-\bg_n)
        -(\widehat H_{n,m_n}-H_n)\bbeta_n\bigr].
\end{aligned}
  \label{eq:target-exact-difference}
\end{equation}
The resolvent identity applied with reference matrix
$H_n+\lambda_nI_d$ gives
\begin{equation}
\begin{aligned}
  (\widehat H_{n,m_n}+\lambda_nI_d)^{-1}
  =A_n-A_n(\widehat H_{n,m_n}-H_n)
  (\widehat H_{n,m_n}+\lambda_nI_d)^{-1}.
\end{aligned}
  \label{eq:target-resolvent}
\end{equation}
Substitution of \cref{eq:target-resolvent} into
\cref{eq:target-exact-difference} yields
\begin{align}
  \bbetahat_{n,m_n}-\bbeta_n
  &=A_n\bigl[(\widehat\bg_{n,m_n}-\bg_n)
       -(\widehat H_{n,m_n}-H_n)\bbeta_n\bigr]
       +R_{n,m_n},
  \label{eq:beta-expansion}
\end{align}
where 
\begin{align*}
    R_{n,m_n}
  &:=-A_n(\widehat H_{n,m_n}-H_n)
  (\widehat H_{n,m_n}+\lambda_nI_d)^{-1}
  \bigl[(\widehat\bg_{n,m_n}-\bg_n)
       -(\widehat H_{n,m_n}-H_n)\bbeta_n\bigr].
\end{align*}
By \cref{ass:regime},
$\norm{A_n}_{\op}\leq c_0^{-1}$ for all sufficiently large $n$.
Moreover, \cref{eq:target-moment-rates} implies
$\norm{\widehat H_{n,m_n}-H_n}_{\op}=o_p(1)$.  Hence, with probability
tending to one,
$\norm{\widehat H_{n,m_n}-H_n}_{\op}\leq c_0/2$; on this event,
Weyl's inequality gives
\[
  \lambda_{\min}
  (\widehat H_{n,m_n}+\lambda_nI_d)
  \geq c_0/2,
\]
so that
$\norm{(\widehat H_{n,m_n}+\lambda_nI_d)^{-1}}_{\op}
\leq2/c_0$.
Because $\bbeta_n\to\bbeta$ under \cref{ass:regime}, the sequence
$\{\norm{\bbeta_n}_2\}$ is bounded.  On the same event,
\begin{align}
  \norm{R_{n,m_n}}_2
  &\leq
  \norm{A_n}_{\op}
  \norm{\widehat H_{n,m_n}-H_n}_{\op}
  \norm{(\widehat H_{n,m_n}+\lambda_nI_d)^{-1}}_{\op}
  \notag\\
  &\quad\times
  \left\{
    \norm{\widehat\bg_{n,m_n}-\bg_n}_2
    +\norm{\widehat H_{n,m_n}-H_n}_{\op}
     \norm{\bbeta_n}_2
  \right\}\notag\\
  &\leq\frac{2}{c_0^2}
  \norm{\widehat H_{n,m_n}-H_n}_{\op}
  \left\{
    \norm{\widehat\bg_{n,m_n}-\bg_n}_2
    +\norm{\widehat H_{n,m_n}-H_n}_{\op}
     \norm{\bbeta_n}_2
  \right\}\notag\\
  &=O_p(m_n^{-1}).
  \label{eq:target-remainder}
\end{align}

We now identify the first-order term in \cref{eq:beta-expansion}.
Using \cref{eq:moment-vector},
\begin{align}
  &(\widehat\bg_{n,m_n}-\bg_n)
       -(\widehat H_{n,m_n}-H_n)\bbeta_n\notag\\
  &\quad=\frac1{m_n}\sum_{i=1}^{m_n}
  \left[
    \bz_{n,i}u_{n,i}-\bg_n
    -(\bz_{n,i}\bz_{n,i}^{\top}-H_n)\bbeta_n
  \right]\notag\\
  &\quad=\frac1{m_n}\sum_{i=1}^{m_n}
  \left[
    \bz_{n,i}(u_{n,i}-\bz_{n,i}^{\top}\bbeta_n)
    -(\bg_n-H_n\bbeta_n)
  \right].
  \label{eq:leading-moment-rearrangement}
\end{align}
The full-data normal equation implies
$\bg_n-H_n\bbeta_n=\lambda_n\bbeta_n$.  Hence, by the definition of
$\bm\psi_{n,i}$ in \cref{eq:derivative-map},
\begin{equation}
  A_n\bigl[(\widehat\bg_{n,m_n}-\bg_n)
       -(\widehat H_{n,m_n}-H_n)\bbeta_n\bigr]
  =\frac1{m_n}\sum_{i=1}^{m_n}\bm\psi_{n,i}.
  \label{eq:leading-as-influences}
\end{equation}
Combining
\cref{eq:beta-expansion,eq:target-remainder,eq:leading-as-influences}
gives
\[
  \sqrt{m_n}(\bbetahat_{n,m_n}-\bbeta_n)
  =\frac1{\sqrt{m_n}}\sum_{i=1}^{m_n}\bm\psi_{n,i}
   +\sqrt{m_n}R_{n,m_n}.
\]
Since
$\sqrt{m_n}R_{n,m_n}=O_p(m_n^{-1/2})=o_p(1)$,
this proves \cref{eq:target-linearization}.

It remains to compute the covariance when the sketch entries are i.i.d.
Using the scalar entry variable $s$ in \cref{eq:sketch-moments},
define
\[
  \bs:=(s_1,\ldots,s_n)^\top,
  \qquad
  s_1,\ldots,s_n\overset{\mathrm{i.i.d.}}{\sim}s,
  \qquad
  \bm\psi_n(\bs)
  :=\frac1nA_nX_n^\top(\bs\bs^\top-I_n)\br_n.
\]
By \cref{eq:omega},
$\Omega_n=\E\{\bm\psi_n(\bs)\bm\psi_n(\bs)^\top\}$.
Since $\E(\bs\bs^{\top})=I_n$,
$\E\{\bm\psi_n(\bs)\}=\bzero$, and therefore
\begin{equation}
\begin{aligned}
  \Omega_n
  =\frac1{n^2}A_nX_n^{\top}
  \E\left[
    (\bs\bs^{\top}-I_n)\br_n\br_n^{\top}
    (\bs\bs^{\top}-I_n)
  \right]
  X_nA_n^{\top}.
\end{aligned}
  \label{eq:influence-covariance-reduction}
\end{equation}
Thus, we only need to evaluate the matrix expectation in
\cref{eq:influence-covariance-reduction}.  Define
\[
  r_{n,j}:=\bm e_j^\top\br_n,
  \qquad
  \bm v:=(\bs\bs^{\top}-I_n)\br_n,
  \qquad
  a:=\bs^{\top}\br_n.
\]
By the definition of $\bm v$, \cref{eq:single-influence} becomes
$\bm\psi_n(\bs)=n^{-1}A_nX_n^{\top}\bm v$.
The $j$th component of $\bm v$ is
\[
  v_j=s_ja-r_{n,j}.
\]
Because the coordinates of $\bs$ are independent, centered, and have unit
variance,
\begin{equation}
  \E(s_ja)
  =\sum_{\ell=1}^{n}r_{n,\ell}\E(s_js_\ell)
  =r_{n,j}.
  \label{eq:sja-mean}
\end{equation}
For $j\neq k$,
\begin{align*}
  \E(v_jv_k)
  &=\E\{(s_ja-r_{n,j})(s_ka-r_{n,k})\}\\
  &=\E(s_js_ka^2)-r_{n,j}r_{n,k},
\end{align*}
where \cref{eq:sja-mean} was used in the second equality.  Expanding
\[
  a^2
  =\sum_{\ell=1}^{n}\sum_{h=1}^{n}
    r_{n,\ell}r_{n,h}s_\ell s_h,
\]
we obtain
\[
  \E(s_js_ka^2)
  =\sum_{\ell=1}^{n}\sum_{h=1}^{n}
    r_{n,\ell}r_{n,h}\E(s_js_ks_\ell s_h).
\]
When $j\neq k$, independence and centering imply that a summand can be
nonzero only if every index among $j,k,\ell,h$ occurs at least twice.
The only possibilities are $(\ell,h)=(j,k)$ and $(\ell,h)=(k,j)$.
Since $\E(s_j^2s_k^2)=1$, it follows that
\[
  \E(s_js_ka^2)=2r_{n,j}r_{n,k},
\]
\begin{equation}
  \E(v_jv_k)=r_{n,j}r_{n,k},
  \qquad j\neq k.
  \label{eq:quadratic-moment-offdiagonal}
\end{equation}

For a diagonal entry, write
\[
  a=r_{n,j}s_j+\sum_{\ell\neq j}r_{n,\ell}s_\ell.
\]
The random variable $s_j$ is independent of the sum on the right, and the
sum has mean zero.  Therefore
\begin{align*}
  \E(s_j^2a^2)
  &=r_{n,j}^2\E(s_j^4)
    +\E(s_j^2)\E\bigg(
      \sum_{\ell\neq j}r_{n,\ell}s_\ell
    \bigg)^2\\
  &=r_{n,j}^2\E(s^4)
    +\sum_{\ell\neq j}r_{n,\ell}^2\\
  &=\norm{\br_n}_2^2
    +\{\E(s^4)-1\}r_{n,j}^2.
\end{align*}
Using \cref{eq:sja-mean} once more,
\begin{align}
  \E(v_j^2)
  &=\E\{(s_ja-r_{n,j})^2\}\notag\\
  &=\E(s_j^2a^2)-2r_{n,j}\E(s_ja)+r_{n,j}^2\notag\\
  &=\norm{\br_n}_2^2
    +\{\E(s^4)-2\}r_{n,j}^2.
  \label{eq:quadratic-moment-diagonal}
\end{align}
Recall from \cref{prop:covariance} that $\kappa_s:=\E(s^4)-3$. Combining the
off-diagonal result in \cref{eq:quadratic-moment-offdiagonal} with the diagonal
result in \cref{eq:quadratic-moment-diagonal} gives
\begin{equation}
\begin{aligned}
  &\phantom{=}\E\left[
    (\bs\bs^{\top}-I_n)\br_n\br_n^{\top}
    (\bs\bs^{\top}-I_n)
  \right]
  &=\norm{\br_n}_2^2I_n
    +\br_n\br_n^{\top}
    +\kappa_s\diag(\br_n\odot\br_n),
\end{aligned}
  \label{eq:quadratic-moment}
\end{equation}

Substituting \cref{eq:quadratic-moment} into
\cref{eq:influence-covariance-reduction} gives
\begin{align*}
  \Omega_n
  =A_n\Biggl\{
    &\frac{\norm{\br_n}_2^2}{n}H_n
    +\left(\frac{X_n^{\top}\br_n}{n}\right)
     \left(\frac{X_n^{\top}\br_n}{n}\right)^{\top}
    +\frac{\kappa_s}{n^2}
      X_n^{\top}\diag(\br_n\odot\br_n)X_n
  \Biggr\}A_n^{\top}.
\end{align*}
Finally, the full-data normal equation yields
\[
  \frac1nX_n^{\top}\br_n
  =\bg_n-H_n\bbeta_n
  =\lambda_n\bbeta_n.
\]
Therefore the middle matrix product in the displayed expression for
$\Omega_n$ equals $\lambda_n^2\bbeta_n\bbeta_n^{\top}$.  Substitution
produces exactly the right-hand side of \cref{eq:gamma} and completes
the proof.
\end{proof}

\section{\texorpdfstring{Proof of \Cref{prop:linearized-equivalence}: first-order equivalence}{Proof of first-order equivalence}}
\label{app:linearized-proof}

\begin{proof}[Proof of \Cref{prop:linearized-equivalence}]
Recall the weighted moments and bootstrap estimator in
\cref{eq:weighted-moments,eq:exact-bootstrap-estimator}, the inverse
$\widehat A_{n,m_n}$ in \cref{eq:sketched-residuals}, and the linearized
change $\widetilde\Delta_{n,m_n,1}^{*}$ in
\cref{eq:linearized-delta}. We show that this linearized change is the
first-order term in the bootstrap estimator obtained by refitting. It is
enough to consider the first bootstrap replicate, so every starred quantity
below has replicate index $b=1$. We expand around the observed sketched
moments $(\widehat H_{n,m_n},\widehat\bg_{n,m_n})$. The derivative at this
point is given in \cref{eq:realized-derivative}.

The bootstrap and sketched normal equations are
\[
  (\widehat H_{n,m_n,1}^{*}+\lambda_nI_d)\bbetastar_{n,m_n,1}
  =\widehat\bg_{n,m_n,1}^{*},
  \qquad
  (\widehat H_{n,m_n}+\lambda_nI_d)\bbetahat_{n,m_n}
  =\widehat\bg_{n,m_n}.
\]
Subtracting the second equation from the first gives
\[
  (\widehat H_{n,m_n,1}^{*}+\lambda_nI_d)
  (\bbetastar_{n,m_n,1}-\bbetahat_{n,m_n})
  =(\widehat\bg_{n,m_n,1}^{*}-\widehat\bg_{n,m_n})
  -(\widehat H_{n,m_n,1}^{*}-\widehat H_{n,m_n})\bbetahat_{n,m_n}.
\]
Consequently,
\begin{align}
  \bbetastar_{n,m_n,1}-\bbetahat_{n,m_n}
  &=(\widehat H_{n,m_n,1}^{*}+\lambda_nI_d)^{-1}
  \left\{(\widehat\bg_{n,m_n,1}^{*}-\widehat\bg_{n,m_n})
  -(\widehat H_{n,m_n,1}^{*}-\widehat H_{n,m_n})\bbetahat_{n,m_n}\right\}.
  \label{eq:bootstrap-exact-difference}
\end{align}
The resolvent identity centered at
$\widehat H_{n,m_n}+\lambda_nI_d$ is
\begin{align}
  (\widehat H_{n,m_n,1}^{*}+\lambda_nI_d)^{-1}
  &=\widehat A_{n,m_n}
  -\widehat A_{n,m_n}(\widehat H_{n,m_n,1}^{*}-\widehat H_{n,m_n})
  (\widehat H_{n,m_n,1}^{*}+\lambda_nI_d)^{-1}.
  \label{eq:bootstrap-resolvent}
\end{align}
Substitution into \cref{eq:bootstrap-exact-difference} shows that the
first-order term is
\begin{align*}
  \widehat A_{n,m_n}\left\{(\widehat\bg_{n,m_n,1}^{*}-\widehat\bg_{n,m_n})
  -(\widehat H_{n,m_n,1}^{*}-\widehat H_{n,m_n})\bbetahat_{n,m_n}\right\}
  =\widetilde\Delta_{n,m_n,1}^{*},
\end{align*}
where the equality is the definition in \cref{eq:linearized-delta}
with $m=m_n$ and $b=1$.  Define the remaining term by
\[
\begin{aligned}
  R_{n,m_n,1}^{*}
  :={}&-\widehat A_{n,m_n}
  (\widehat H_{n,m_n,1}^{*}-\widehat H_{n,m_n})
  (\widehat H_{n,m_n,1}^{*}+\lambda_nI_d)^{-1}\\
  &\quad\times\left\{(\widehat\bg_{n,m_n,1}^{*}-\widehat\bg_{n,m_n})
  -(\widehat H_{n,m_n,1}^{*}-\widehat H_{n,m_n})
  \bbetahat_{n,m_n}\right\}.
\end{aligned}
\]
Then \cref{eq:bootstrap-exact-difference,eq:bootstrap-resolvent} give the
exact expansion
\[
  \bbetastar_{n,m_n,1}-\bbetahat_{n,m_n}
  =\widetilde\Delta_{n,m_n,1}^{*}+R_{n,m_n,1}^{*}.
\]
Moreover,
\begin{align}
  \norm{R_{n,m_n,1}^{*}}_2
  \leq{}&
  \norm{\widehat A_{n,m_n}}_{\op}
  \norm{\widehat H_{n,m_n,1}^{*}-\widehat H_{n,m_n}}_{\op}
  \norm{(\widehat H_{n,m_n,1}^{*}+\lambda_nI_d)^{-1}}_{\op}\notag\\
  &\times\left\{
  \norm{\widehat\bg_{n,m_n,1}^{*}-\widehat\bg_{n,m_n}}_2
  +\norm{\widehat H_{n,m_n,1}^{*}-\widehat H_{n,m_n}}_{\op}
   \norm{\bbetahat_{n,m_n}}_2\right\}.
  \label{eq:bootstrap-remainder-bound}
\end{align}
To establish the conditional order of the remainder, represent the first
bootstrap sample as $m_n$ independent draws from the empirical distribution
of the observed compressed rows. Conditional on $\mathcal S_n$,
\begin{align*}
  \Estar\norm{\widehat H_{n,m_n,1}^{*}-\widehat H_{n,m_n}}_{\mathrm F}^2
  &=\frac1{m_n}\left\{\frac1{m_n}\sum_{i=1}^{m_n}
    \norm{\bz_{n,i}\bz_{n,i}^\top-\widehat H_{n,m_n}}_{\mathrm F}^2\right\},\\
  \Estar\norm{\widehat\bg_{n,m_n,1}^{*}-\widehat\bg_{n,m_n}}_2^2
  &=\frac1{m_n}\left\{\frac1{m_n}\sum_{i=1}^{m_n}
    \norm{\bz_{n,i}u_{n,i}-\widehat\bg_{n,m_n}}_2^2\right\}.
\end{align*}
The quantities in braces are $O_p(1)$ by \cref{ass:sketch}.
Markov's inequality and $\norm{M}_{\op}\leq\norm{M}_{\mathrm F}$ therefore give
\begin{equation}
  \norm{\widehat H_{n,m_n,1}^{*}-\widehat H_{n,m_n}}_{\op}
  =O_p^*(m_n^{-1/2}),
  \qquad
  \norm{\widehat\bg_{n,m_n,1}^{*}-\widehat\bg_{n,m_n}}_2
  =O_p^*(m_n^{-1/2})
  \label{eq:bootstrap-conditional-rates}
\end{equation}
in $\PS$-probability.  By \cref{ass:regime,prop:covariance},
$\bbetahat_{n,m_n}=O_p(1)$.  By
\cref{ass:regime,eq:target-moment-rates},
$\lambda_{\min}(\widehat H_{n,m_n}+\lambda_nI_d)$ is bounded away from
zero with probability tending to one.  On the event
$\lambda_{\min}(\widehat H_{n,m_n}+\lambda_nI_d)\geq c_0/2$,
Weyl's inequality gives
$\lambda_{\min}(\widehat H_{n,m_n,1}^{*}+\lambda_nI_d)\geq c_0/4$
whenever
$\norm{\widehat H_{n,m_n,1}^{*}-\widehat H_{n,m_n}}_{\op}\leq c_0/4$.
The first relation in \cref{eq:bootstrap-conditional-rates} implies that the
latter event has conditional probability tending to one in
$\PS$-probability. Hence
$(\widehat H_{n,m_n,1}^{*}+\lambda_nI_d)^{-1}=O_p^*(1)$ in
$\PS$-probability. Substituting these bounds into
\cref{eq:bootstrap-remainder-bound} gives
$R_{n,m_n,1}^{*}=O_p^*(m_n^{-1})$ in $\PS$-probability. Multiplying by
$\sqrt{m_n}$ proves \cref{eq:linearized-equivalence}.
\end{proof}

\section{\texorpdfstring{Proof of \Cref{thm:bootstrap-consistency}: Gaussian limits}{Proof of Gaussian limits}}
\label{app:bootstrap-proof}

\begin{proof}[Proof of \Cref{thm:bootstrap-consistency}]
Recall $A_n$ and $\bbeta_n$ from \cref{eq:full-ridge,eq:full-moments},
the influence vectors $\bm\psi_{n,i}$ and
$\widehat{\bm\psi}_{n,m_n,i}$ from
\cref{eq:derivative-map,eq:single-influence,eq:empirical-influence}, and the
covariance matrices from \cref{eq:omega}. We first prove asymptotic normality
of the sketched estimator and then prove the conditional bootstrap
approximation.

\paragraph{Central limit theorem for the sketched estimator.}
The matrices $A_n$ and vectors $\bbeta_n$ are bounded.  Therefore
\cref{ass:sketch,eq:derivative-map} imply
$\sup_n\ES\norm{\bm\psi_{n,1}}_2^4<\infty$.  For any
$\bm c\in\R^d$ and $\varepsilon>0$,
\begin{equation}
  \ES\left[(\bm c^\top\bm\psi_{n,1})^2
  \mathbf1\{|\bm c^\top\bm\psi_{n,1}|>\varepsilon\sqrt{m_n}\}\right]
  \leq\frac{\ES(\bm c^\top\bm\psi_{n,1})^4}{\varepsilon^2m_n}
  \longrightarrow0.
  \label{eq:lindeberg-bound}
\end{equation}
Thus the Lindeberg condition holds for every fixed $\bm c$. Since the
variance converges to $\bm c^\top\Omega\bm c$, the Lindeberg--Feller theorem
followed by the Cram\'er--Wold device gives
\begin{equation}
  \frac1{\sqrt{m_n}}\sum_{i=1}^{m_n}\bm\psi_{n,i}
  \dto N(\bzero,\Omega).
  \label{eq:moment-clt}
\end{equation}
Combining \cref{eq:moment-clt,eq:target-linearization} proves
\cref{eq:target-clt}.

\paragraph{Conditional central limit theorem for the bootstrap.}
Recall the estimated influence vectors
$\widehat{\bm\psi}_{n,m_n,i}$ from \cref{eq:empirical-influence}.  The
sketched normal equation implies
\[
  \frac1{m_n}\sum_{i=1}^{m_n}\widehat{\bm\psi}_{n,m_n,i}
  =\widehat A_{n,m_n}
  \{\widehat\bg_{n,m_n}-\widehat H_{n,m_n}\bbetahat_{n,m_n}
  -\lambda_n\bbetahat_{n,m_n}\}
  =\bzero,
\]
so their empirical mean is zero.

Recall the conditional resampling representation from
\appref{app:proofs}:
\[
  I_{n,1}^*,\ldots,I_{n,m_n}^*\mid\mathcal S_n
  \overset{\mathrm{i.i.d.}}{\sim}
  \operatorname{Uniform}\{1,\ldots,m_n\},
  \qquad
  w_{1,i}:=\sum_{j=1}^{m_n}\mathbf1\{I_{n,j}^*=i\},
  \quad i=1,\ldots,m_n.
\]
Then $(w_{1,1},\ldots,w_{1,m_n})$ has the law of the first multinomial
weight vector $\bw_1$ in \cref{eq:multinomial-weights}.  Hence,
conditional on $\mathcal S_n$,
\begin{align}
  \sqrt{m_n}\,\widetilde\Delta_{n,m_n,1}^{*}
  &=\frac1{\sqrt{m_n}}\sum_{i=1}^{m_n}(w_{1,i}-1)
    \widehat{\bm\psi}_{n,m_n,i}\notag\\
  &=\frac1{\sqrt{m_n}}\sum_{j=1}^{m_n}
    \widehat{\bm\psi}_{n,m_n,I_{n,j}^*}.
  \label{eq:bootstrap-iid-representation}
\end{align}
The first equality is \cref{eq:linearized-as-influence} with $b=1$.
For the second equality, the count representation gives
$\sum_iw_{1,i}\widehat{\bm\psi}_{n,m_n,i}
=\sum_j\widehat{\bm\psi}_{n,m_n,I_{n,j}^*}$, and the zero empirical
mean removes $\sum_i\widehat{\bm\psi}_{n,m_n,i}$.

We next verify the conditional covariance and Lindeberg conditions.
From \cref{eq:derivative-map,eq:empirical-influence},
\begin{align}
  \widehat{\bm\psi}_{n,m_n,i}-\bm\psi_{n,i}
  &=(\widehat A_{n,m_n}-A_n)
    \left\{\bz_{n,i}(u_{n,i}-\bz_{n,i}^\top\bbeta_n)
      -\lambda_n\bbeta_n\right\}\notag\\
  &\quad-\widehat A_{n,m_n}
    (\bz_{n,i}\bz_{n,i}^\top+\lambda_nI_d)
    (\bbetahat_{n,m_n}-\bbeta_n).
  \label{eq:empirical-influence-difference}
\end{align}
The centered representation used below follows directly from the
sketched normal equation.  Indeed, \cref{eq:empirical-influence} and
\cref{eq:sketched-residuals} give
\begin{align}
  \widehat{\bm\psi}_{n,m_n,i}
  &=\widehat A_{n,m_n}
  \{\bz_{n,i}u_{n,i}
  -(\bz_{n,i}\bz_{n,i}^\top+\lambda_nI_d)\bbetahat_{n,m_n}\}\notag\\
  &=\widehat A_{n,m_n}\left\{
  (\bz_{n,i}u_{n,i}-\widehat\bg_{n,m_n})
  -(\bz_{n,i}\bz_{n,i}^\top-\widehat H_{n,m_n})\bbetahat_{n,m_n}
  \right\},
  \label{eq:empirical-influence-centered}
\end{align}
because
$\widehat\bg_{n,m_n}=(\widehat H_{n,m_n}+\lambda_nI_d)
\bbetahat_{n,m_n}$.

The resolvent identity in \cref{eq:target-resolvent} and the rate in
\cref{eq:target-moment-rates} yield
$\widehat A_{n,m_n}-A_n=O_p(m_n^{-1/2})$, while
\cref{prop:covariance} gives
$\bbetahat_{n,m_n}-\bbeta_n=O_p(m_n^{-1/2})$.  From
\cref{eq:empirical-influence-difference},
\begin{align*}
  &\frac1{m_n}\sum_{i=1}^{m_n}
  \norm{\widehat{\bm\psi}_{n,m_n,i}-\bm\psi_{n,i}}_2^2\\
  &\quad\leq
  2\norm{\widehat A_{n,m_n}-A_n}_{\op}^2
  \frac1{m_n}\sum_{i=1}^{m_n}
  \norm{\bz_{n,i}(u_{n,i}-\bz_{n,i}^\top\bbeta_n)
  -\lambda_n\bbeta_n}_2^2\\
  &\qquad+2\norm{\widehat A_{n,m_n}}_{\op}^2
  \norm{\bbetahat_{n,m_n}-\bbeta_n}_2^2
  \frac1{m_n}\sum_{i=1}^{m_n}
  \norm{\bz_{n,i}\bz_{n,i}^\top+\lambda_nI_d}_{\op}^2.
\end{align*}
Both empirical averages on the right are $O_p(1)$ by
\cref{ass:regime,ass:sketch}, and
$\norm{\widehat A_{n,m_n}}_{\op}=O_p(1)$.  Therefore
\[
  \frac1{m_n}\sum_{i=1}^{m_n}
  \norm{\widehat{\bm\psi}_{n,m_n,i}-\bm\psi_{n,i}}_2^2\pto0.
\]
Also, the uniform fourth-moment bound gives
\[
  \norm{\frac1{m_n}\sum_{i=1}^{m_n}
  \bm\psi_{n,i}\bm\psi_{n,i}^\top-\Omega_n}_{\mathrm F}\pto0.
\]
Moreover, by Cauchy--Schwarz,
\begin{align*}
  &\norm{\frac1{m_n}\sum_{i=1}^{m_n}
  \{\widehat{\bm\psi}_{n,m_n,i}\widehat{\bm\psi}_{n,m_n,i}^\top
  -\bm\psi_{n,i}\bm\psi_{n,i}^\top\}}_{\mathrm F}\\
  &\leq
  \left\{\frac1{m_n}\sum_{i=1}^{m_n}
  \norm{\widehat{\bm\psi}_{n,m_n,i}-\bm\psi_{n,i}}_2^2\right\}^{1/2}\\
  &\quad\times\left[
  \left\{\frac1{m_n}\sum_{i=1}^{m_n}
  \norm{\widehat{\bm\psi}_{n,m_n,i}}_2^2\right\}^{1/2}
  +\left\{\frac1{m_n}\sum_{i=1}^{m_n}
  \norm{\bm\psi_{n,i}}_2^2\right\}^{1/2}
  \right]\pto0.
\end{align*}
Together with \cref{eq:omega}, this proves
\begin{equation}
  \frac1{m_n}\sum_{i=1}^{m_n}
  \widehat{\bm\psi}_{n,m_n,i}\widehat{\bm\psi}_{n,m_n,i}^\top
  \pto\Omega.
  \label{eq:empirical-covariance}
\end{equation}
By \cref{eq:empirical-influence-centered}, each influence vector is
written in terms of centered row moments.  This representation, the bounds
$\norm{\widehat A_{n,m_n}}_{\op}=O_p(1)$ and
$\norm{\bbetahat_{n,m_n}}_2=O_p(1)$, and the fourth-moment condition in
\cref{ass:sketch} imply
\begin{align*}
  &\phantom{=}\frac1{m_n}\sum_{i=1}^{m_n}
  \norm{\widehat{\bm\psi}_{n,m_n,i}}_2^4\\
  &\leq 8\norm{\widehat A_{n,m_n}}_{\op}^4
  \left\{
  \frac1{m_n}\sum_{i=1}^{m_n}
  \norm{\bz_{n,i}u_{n,i}-\widehat\bg_{n,m_n}}_2^4
  \quad+\norm{\bbetahat_{n,m_n}}_2^4
  \frac1{m_n}\sum_{i=1}^{m_n}
  \norm{\bz_{n,i}\bz_{n,i}^\top-\widehat H_{n,m_n}}_{\mathrm F}^4
  \right\} \\
 &=O_p(1).
\end{align*}
Here the empirical fourth moments centered at the empirical means are
$O_p(1)$ by the fourth-moment condition in \cref{ass:sketch} and the
inequality $\norm{\bm a-\bm b}_2^4\leq
8\{\norm{\bm a}_2^4+\norm{\bm b}_2^4\}$.
Therefore, for every $\varepsilon>0$,
\begin{equation}
  \frac1{m_n}\sum_{i=1}^{m_n}
  \norm{\widehat{\bm\psi}_{n,m_n,i}}_2^2
  \mathbf1\big\{\norm{\widehat{\bm\psi}_{n,m_n,i}}_2>\varepsilon\sqrt{m_n}\big\}
  \leq\frac1{\varepsilon^2m_n^2}\sum_{i=1}^{m_n}
  \norm{\widehat{\bm\psi}_{n,m_n,i}}_2^4\pto0.
  \label{eq:conditional-lindeberg}
\end{equation}

Consider any subsequence. By
\cref{eq:empirical-covariance,eq:conditional-lindeberg}, it has a further
subsequence along which both the conditional covariance convergence and the
conditional Lindeberg condition hold almost surely. The conditional
multivariate Lindeberg--Feller theorem then gives the Gaussian limit along
this further subsequence. The subsequence characterization of convergence in
probability therefore yields
\begin{equation}
  \sqrt{m_n}\,\widetilde\Delta_{n,m_n,1}^{*}
  \dto N(\bzero,\Omega)
  \quad\text{conditionally in $\PS$-probability}.
  \label{eq:bootstrap-moment-clt}
\end{equation}
Finally, \cref{prop:linearized-equivalence,eq:bootstrap-moment-clt} and the
conditional version of Slutsky's theorem prove \cref{eq:bootstrap-clt} for
the first bootstrap replicate.
\end{proof}

\section{\texorpdfstring{Proof of \Cref{cor:bootstrap-distribution-consistency}: bootstrap distribution consistency}{Proof of bootstrap distribution consistency}}
\label{app:distribution-proof}

\begin{proof}[Proof of \Cref{cor:bootstrap-distribution-consistency}]
Recall the distribution functions in
\cref{eq:ideal-distribution-functions,eq:empirical-bootstrap-cdf} and the
following definitions from \appref{app:proofs}:
\[
  \bm Z\sim N(\bzero,\Omega),
  \qquad
  G(t):=\Pp\{\norm{\bm Z}_2\leq t\}.
\]
Since $\rank(\Omega)\geq1$,
$\norm{\bm Z}_2$ is the square root of a nondegenerate weighted sum of
independent $\chi_1^2$ variables.  Hence $G$ is continuous on $\R$ and strictly increasing on $(0,\infty)$.

Applying the continuous mapping theorem to \cref{eq:target-clt}, followed by
P\'olya's theorem, gives
\[
  \sup_{t\in\R}|F_{n,m_n}(t)-G(t)|\longrightarrow0.
\]
For the conditional bootstrap distribution, the conditional continuous
mapping theorem applied to \cref{eq:bootstrap-clt} gives
$\widehat F_{n,m_n}(t)\pto G(t)$ for every $t$. Because $G$ is continuous,
the standard argument using a finite grid and monotonicity in the proof of P\'olya's
theorem applies in probability and gives
\[
  \sup_{t\in\R}|\widehat F_{n,m_n}(t)-G(t)|\pto0.
\]
The triangle inequality proves \cref{eq:ideal-uniform}.

Conditional on $\mathcal S_n$, the scaled bootstrap errors
$\sqrt{m_n}e_{n,m_n,1}^*,\ldots,\sqrt{m_n}e_{n,m_n,B_n}^*$ defined in
\cref{eq:exact-bootstrap-threshold} are i.i.d. with distribution function
$\widehat F_{n,m_n}$ from \cref{eq:ideal-distribution-functions}.  Thus the Dvoretzky--Kiefer--Wolfowitz inequality \citep{dvoretzky1956asymptotic,massart1990tight} gives, for every $\varepsilon>0$,
\begin{equation}
  \Pstar\left\{
  \sup_t|\widehat F_{n,m_n,B_n}(t)-\widehat F_{n,m_n}(t)|>\varepsilon
  \right\}
  \leq2\exp(-2B_n\varepsilon^2).
  \label{eq:dkw}
\end{equation}
If $B_n\to\infty$, taking expectations in \cref{eq:dkw} shows that the
Kolmogorov distance between the empirical bootstrap distribution and its
exact conditional distribution converges to zero in probability. Combining
this result with \cref{eq:ideal-uniform} proves \cref{eq:empirical-uniform}.
\end{proof}

\section{\texorpdfstring{Proof of \Cref{thm:exact-coverage}: asymptotic coverage}{Proof of asymptotic coverage}}
\label{app:coverage-proof}

\begin{proof}[Proof of \Cref{thm:exact-coverage}]
Recall that the empirical error bounds obtained by refitting and by
linearization are defined in
\cref{eq:exact-bootstrap-threshold,eq:linearized-error} with $m=m_n$ and
$B=B_n$. Recall the definitions
$\bm Z\sim N(\bzero,\Omega)$,
$G(t):=\Pp\{\norm{\bm Z}_2\leq t\}$, and
$q:=G^{-1}(1-\alpha)$ from \appref{app:proofs}.  The continuity and
strict monotonicity properties of $G$ are proved in
\appref{app:distribution-proof}.  The two convergence statements proved in
\appref{app:distribution-proof} imply
\[
  \sup_{t\in\R}|\widehat F_{n,m_n,B_n}(t)-G(t)|\pto0.
\]
Because $G$ is continuous and strictly increasing at $q$, quantile
consistency applies. Since $\sqrt{m_n}>0$, the empirical quantile of the scaled
bootstrap errors equals $\sqrt{m_n}$ times the empirical quantile of the
unscaled errors. Therefore,
\begin{equation}
  \sqrt{m_n}\,\widehat\epsilon_{n,m_n,B_n}(\alpha)\pto q.
  \label{eq:quantile-consistency}
\end{equation}
For every $\eta>0$,
\begin{align}
  &\Pp\!\left\{\sqrt{m_n}\norm{\bbetahat_{n,m_n}-\bbeta_n}_2\leq q-\eta\right\}
  -\Pp\!\left\{\left|\sqrt{m_n}\widehat\epsilon_{n,m_n,B_n}(\alpha)-q\right|>\eta\right\}\notag\\
  &\quad\leq
  \Pp\!\left\{\norm{\bbetahat_{n,m_n}-\bbeta_n}_2
  \leq\widehat\epsilon_{n,m_n,B_n}(\alpha)\right\}\notag\\
  &\quad\leq
  \Pp\!\left\{\sqrt{m_n}\norm{\bbetahat_{n,m_n}-\bbeta_n}_2\leq q+\eta\right\}
  +\Pp\!\left\{\left|\sqrt{m_n}\widehat\epsilon_{n,m_n,B_n}(\alpha)-q\right|>\eta\right\}.
  \label{eq:random-threshold-sandwich}
\end{align}
By \cref{eq:target-clt,eq:quantile-consistency}, taking lower and upper
limits in \cref{eq:random-threshold-sandwich} gives
\begin{align*}
  G(q-\eta)
  &\leq\liminf_{n\to\infty}
  \Pp\!\left\{\norm{\bbetahat_{n,m_n}-\bbeta_n}_2
  \leq\widehat\epsilon_{n,m_n,B_n}(\alpha)\right\}\\
  &\leq\limsup_{n\to\infty}
  \Pp\!\left\{\norm{\bbetahat_{n,m_n}-\bbeta_n}_2
  \leq\widehat\epsilon_{n,m_n,B_n}(\alpha)\right\}\\
  &\leq G(q+\eta).
\end{align*}
Letting $\eta\downarrow0$ and using continuity of $G$ and
$G(q)=1-\alpha$ proves \cref{eq:exact-coverage}. This argument does not
require the estimator and error bound to be independent.

For the linearized error bound, \cref{eq:bootstrap-moment-clt} and the
continuous mapping theorem show that the conditional distribution of
$\sqrt{m_n}\,\widetilde e_{n,m_n,1}^{*}$ converges to $G$ in
$\PS$-probability. The subsequence and P\'olya arguments used in
\appref{app:distribution-proof} make this convergence uniform in $t$.
Conditional on $\mathcal S_n$, the variables
$\widetilde e_{n,m_n,1}^{*},\ldots,\widetilde e_{n,m_n,B_n}^{*}$ are
independent and identically distributed. The Dvoretzky--Kiefer--Wolfowitz
inequality and $B_n\to\infty$ therefore give uniform convergence of their
empirical distribution function. The same positive-scaling property of
empirical quantiles then gives
\[
  \sqrt{m_n}\,\widetilde\epsilon_{n,m_n,B_n}(\alpha)\pto q.
\]
Replacing $\widehat\epsilon_{n,m_n,B_n}(\alpha)$ by
$\widetilde\epsilon_{n,m_n,B_n}(\alpha)$ in
\cref{eq:random-threshold-sandwich} and repeating the displayed
liminf--limsup argument yields
\[
  \Pp\!\left\{\norm{\bbetahat_{n,m_n}-\bbeta_n}_2
  \leq\widetilde\epsilon_{n,m_n,B_n}(\alpha)\right\}
  \longrightarrow1-\alpha,
\]
which is \cref{eq:linearized-coverage}.
\end{proof}

\section{\texorpdfstring{Proof of \Cref{prop:mc-protected}: Monte Carlo error control}{Proof of Monte Carlo error control}}
\label{app:mc-proof}

\begin{proof}[Proof of \Cref{prop:mc-protected}]
For the finite-sample result, fix the observed sketch. Using
$e_{n,m,1}^*$ from \cref{eq:exact-bootstrap-threshold} and the quantile
formula repeated in \appref{app:proofs}, define
\[
  F^*(t):=\Pstar\{e_{n,m,1}^*\leq t\},
  \qquad
  q^*:=F^{*-1}(p)=q_{n,m}^*(p),
  \qquad
  F^*(q^*-):=\lim_{t\uparrow q^*}F^*(t),
\]
\[
  N_B:=\sum_{b=1}^B\mathbf1\{e_{n,m,b}^*<q^*\},
  \qquad
  N_B\mid\mathcal S_{n,m}
  \sim\operatorname{Bin}\bigl(B,F^*(q^*-)\bigr).
\]
Since $F^*(q^*-)\leq p$, the conditional law of $N_B$ is stochastically
dominated by $\operatorname{Bin}(B,p)$.  Moreover,
$\{e_{n,m,(k)}^*\geq q^*\}=\{N_B\leq k-1\}$.  The definition of
$k_B(p,\delta)$ in \cref{eq:mc-order} therefore gives
\[
  \Pstar\{e_{n,m,(k_B(p,\delta))}^*\geq q^*\}\geq1-\delta
\]
when $k_B(p,\delta)<\infty$. If $k_B(p,\delta)=\infty$, the adjusted error
bound equals $+\infty$ by \cref{eq:mc-threshold}, so the result is immediate. This proves
\cref{eq:mc-conditional}.

For the asymptotic assertion, define
\[
  E_n:=\norm{\bbetahat_{n,m_n}-\bbeta_n}_2,
  \qquad
  q_n^*:=q_{n,m_n}^*(1-\alpha).
\]
Recall $G(t):=\Pp\{\norm{\bm Z}_2\leq t\}$ and
$q:=G^{-1}(1-\alpha)$ from \appref{app:proofs}; their continuity and
strict monotonicity properties are proved in
\appref{app:distribution-proof}. The uniform convergence result proved there
gives $\sup_t|\widehat F_{n,m_n}(t)-G(t)|\pto0$. Since $G$ is
continuous and strictly increasing at $q$, quantile consistency gives
\[
  \sqrt{m_n}\,q_n^*\pto q.
\]
For every $\eta>0$, the same sandwich argument used in
\appref{app:coverage-proof} gives
\begin{align*}
  &\Pp\!\left\{\sqrt{m_n}E_n\leq q-\eta\right\}
  -\Pp\!\left\{\left|\sqrt{m_n}q_n^*-q\right|>\eta\right\}
  \leq\Pp(E_n\leq q_n^*)\\
  &\qquad\leq
  \Pp\!\left\{\sqrt{m_n}E_n\leq q+\eta\right\}
  +\Pp\!\left\{\left|\sqrt{m_n}q_n^*-q\right|>\eta\right\}.
\end{align*}
By \cref{eq:target-clt}, first taking lower and upper limits and then
letting $\eta\downarrow0$ yields
$\Pp(E_n\leq q_n^*)\to1-\alpha$, and hence
$\Pp(E_n>q_n^*)\leq\alpha+o(1)$.  By
\cref{eq:mc-conditional}, for almost every sketch the conditional bootstrap
distribution satisfies
\[
  \Pstar\{\widehat\epsilon_{n,m_n,B_n}^{\mathrm{MC}}(\alpha,\delta)
  <q_n^*\}\leq\delta.
\]
Taking expectation with respect to the sketch distribution and applying the
union bound gives
\[
  \Pp\{E_n>\widehat\epsilon_{n,m_n,B_n}^{\mathrm{MC}}(\alpha,\delta)\}
  \leq\Pp(E_n>q_n^*)
  +\Pp\{\widehat\epsilon_{n,m_n,B_n}^{\mathrm{MC}}(\alpha,\delta)<q_n^*\}
  \leq\alpha+\delta+o(1).
\]
Taking complements and then the lower limit proves
\cref{eq:mc-coverage}. For each $n$, a finite adjusted error bound exists
exactly when $(1-\alpha)^{B_n}\leq\delta$.
\end{proof}

\section{\texorpdfstring{Proof of \Cref{thm:simultaneous-selection}: selection over a finite grid}{Proof of selection over a finite grid}}
\label{app:selection-proof}

\begin{proof}[Proof of \Cref{thm:simultaneous-selection}]
Recall the definitions of $\bbetahat_{n,j}$,
$\widehat\epsilon_{n,j}(\alpha/K)$, and $\widehat j_n$ from the setup
preceding \cref{thm:simultaneous-selection}, repeated in
\appref{app:proofs}. For $j=1,\ldots,K$, define the failure event for sketch
size $m_{n,j}$ by
\[
  \mathcal B_{n,j}
  :=\left\{\norm{\bbetahat_{n,j}-\bbeta_n}_2
  >\widehat\epsilon_{n,j}(\alpha/K)\right\}.
\]
The event $\mathcal B_{n,j}$ occurs when the estimated error bound for sketch
size $m_{n,j}$ is smaller than its actual coefficient error. If
$\widehat j_n=j$, this error bound is at most $\tau$. Hence, if the selected
estimate has coefficient error greater than $\tau$, then $\mathcal B_{n,j}$
must occur:
\[
  \left\{\widehat j_n=j,\
  \norm{\bbetahat_{n,j}-\bbeta_n}_2>\tau\right\}
  \subseteq\mathcal B_{n,j}.
\]
Therefore, the event in \cref{eq:familywise-coverage} is contained in
$\bigcup_{j=1}^K\mathcal B_{n,j}$. Since $K$ is fixed, Bonferroni's inequality
and the assumed marginal coverage bounds give
\begin{align*}
  &\limsup_{n\to\infty}
  \Pp\bigg(\bigcup_{j=1}^K
  \left\{\widehat j_n=j,\ \norm{\bbetahat_{n,j}-\bbeta_n}_2>\tau\right\}
  \bigg)\\
  &\quad\leq
  \limsup_{n\to\infty}\Pp\bigg(\bigcup_{j=1}^K\mathcal B_{n,j}\bigg)
  \leq\sum_{j=1}^K\limsup_{n\to\infty}\Pp(\mathcal B_{n,j})
  \leq\sum_{j=1}^K\frac{\alpha}{K}=\alpha.
\end{align*}
The argument does not require the sketches at different grid points to be
independent. This proves \cref{eq:familywise-coverage}.
\end{proof}

\section{Supplementary numerical experiments}
\label{sec:supp-experiments}

\subsection{Archived benchmark settings}
\label{sec:source-experiments}

This appendix records the available settings for historical experiments from
earlier versions. Four benchmark figures that appeared in those versions are
omitted because neither their original scripts nor their numerical output are
available. The real data used in those experiments are not included, and the
descriptions of the synthetic data do not determine every implementation
detail. Reconstructing the figures would therefore not constitute a verified
reproduction, so we do not use them to support any numerical claim.

The available record indicates that the omitted experiments used
YearPredictionMSD ($n=463715$, $d=90$), cpusmall ($n=8192$, $d=12$), and two
synthetic design matrices ($n=50000$, $d=100$) with condition numbers
$10^{12}$ and $10^2$. The other recorded settings were a dense Gaussian
sketch, $\alpha=0.05$, $B=20$, sketch ratios
$m/d\in\{5,10,15,20,25\}$, and unnormalized regularization parameters
$\rho\in\{0.01,0.1,1\}$. We report these settings only to explain the regularization-parameter
conversion in \cref{sec:normalization}. None of the numerical conclusions in
this paper depends on the omitted figures. 

\paragraph{Timing measurements and computing environment.}
The timing results reported in \cref{sec:computational-results} were obtained
on a MacBook Pro with an Apple M1 Max (10-core CPU) and 64 GB of memory,
running macOS 26.5.2 and Python 3.11.4. The environment used NumPy 2.3.5
and SciPy 1.17.0; NumPy was linked against Apple's Accelerate framework,
with BLAS-related thread counts fixed to one. The reported runtime ratios
compare replicate-generation time only and exclude the initial fit, which is
common to both methods. These timings depend on the hardware and
linear-algebra implementation and should be interpreted as measurements for
this reference environment rather than as hardware-independent performance
guarantees.

\subsection{Does coefficient error decrease at the inverse-square-root rate?}

The asymptotic theory scales coefficient error by $\sqrt m$. To examine
whether this scaling is reasonable at the tested sketch sizes,
\cref{fig:scaled-thresholds} plots the independent-sketch reference quantile
and each estimated error bound after multiplication by $\sqrt m$. A curve
should be approximately horizontal if the unscaled quantity is proportional
to $m^{-1/2}$.

The curves vary moderately across the tested sketch ratios. This is compatible
with an $m^{-1/2}$ error scale, but it is not a formal test of the asymptotic
theory. The error bound obtained with the order-statistic correction is larger
than the unadjusted bound because both use the same bootstrap errors obtained
by refitting, while the correction selects a higher order statistic.

\begin{figure}[!t]
\centering
\includegraphics[width=\linewidth]{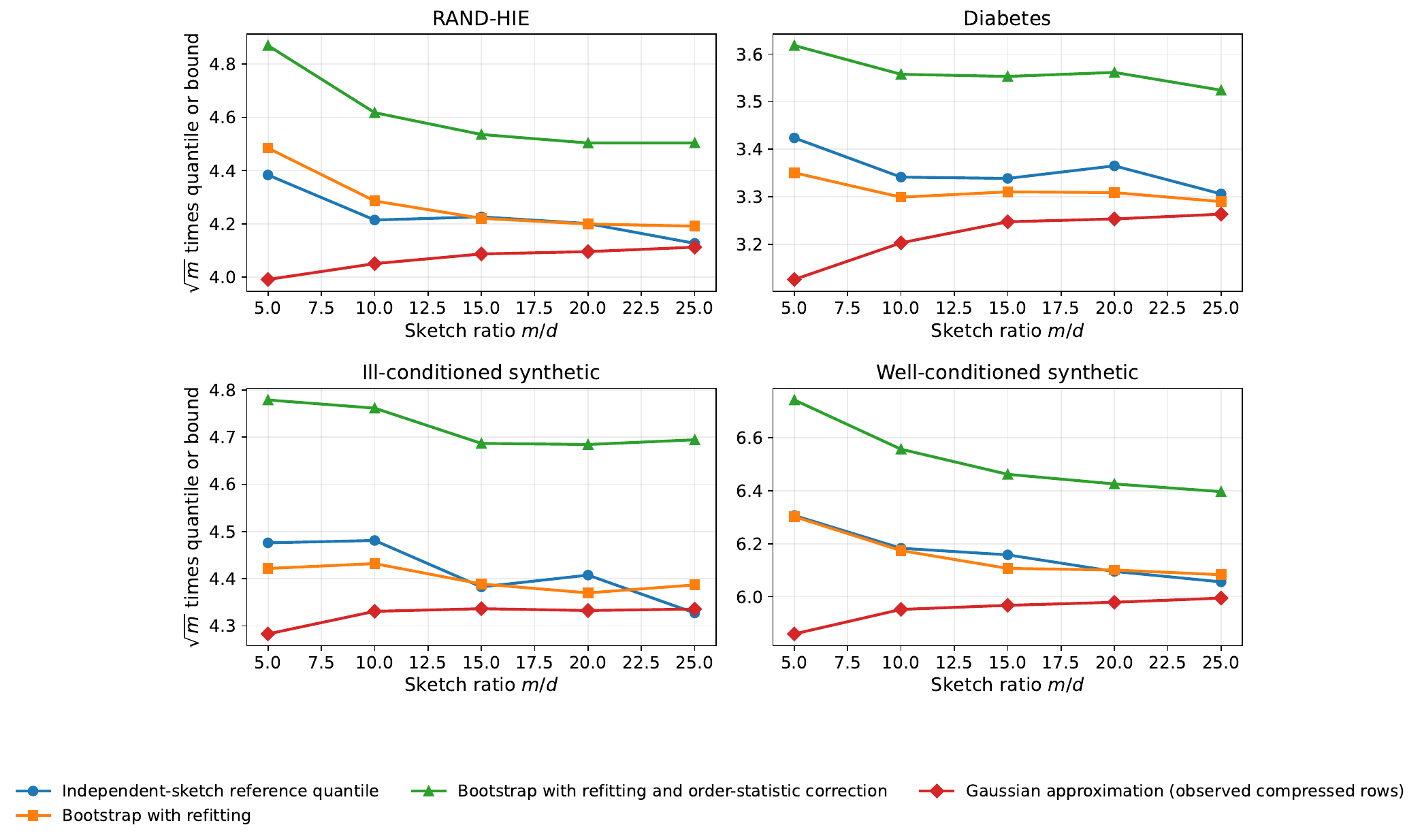}
\caption{Independent-sketch reference quantile and bootstrap error
bounds, each multiplied by $\sqrt m$. An approximately horizontal curve is
compatible with an $m^{-1/2}$ error scale.}
\label{fig:scaled-thresholds}
\end{figure}

\subsection{Calibration across nominal coverage levels}

This experiment uses four nominal coverage levels: $0.80$, $0.90$, $0.95$,
and $0.975$. It uses $m/d=15$, $B=199$, and bootstrap errors obtained by
refitting. The unadjusted and adjusted bounds use exactly the same
bootstrap errors and differ only in the selected order statistic. The diagonal
in \cref{fig:calibration} represents exact agreement between empirical and
nominal coverage.

At some lower nominal levels, the empirical coverage of the unadjusted bound
is below the nominal level. The adjusted bound generally gives coverage closer
to or above the nominal level. At nominal coverage $0.95$, the unadjusted
coverage is $0.940$ for RAND-HIE and $0.953$ for Ill-conditioned synthetic.
The adjusted coverage is $0.980$ for both data sets.

\begin{figure}[!t]
\centering
\includegraphics[width=0.94\linewidth]{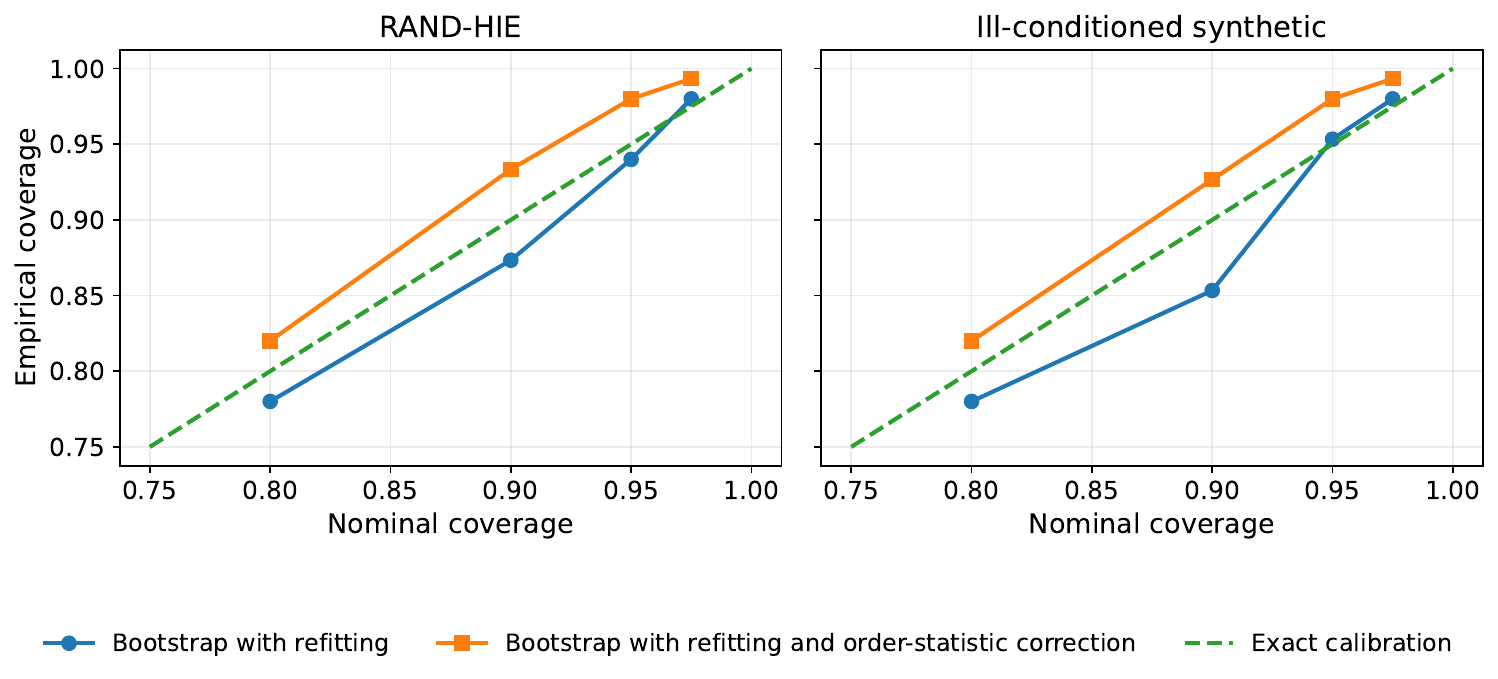}
\caption{Empirical coverage versus nominal coverage at $m/d=15$, $B=199$,
and $150$ Monte Carlo repetitions. Both bootstrap curves use the same
replicate errors obtained by refitting. The diagonal is exact calibration.}
\label{fig:calibration}
\end{figure}

\subsection{Sensitivity to regularization}
This sensitivity study was run independently of the main coverage
experiment in \cref{sec:coverage-results}, so the $\lambda=0.1$
entry below is based on a separate Monte Carlo run.

\Cref{fig:regularization,tab:regularization-summary} reports results for
$\lambda\in\{0,0.01,0.1,1\}$ on RAND-HIE at $m/d=15$. For each value, the
experiment uses $150$ Monte Carlo repetitions, $1500$ independent sketches to
estimate the independent-sketch reference quantile, and $5000$ Gaussian draws
to estimate each Gaussian-approximation quantile. We include $\lambda=0$ only
as a numerical check at the boundary; it lies outside the finite-sample
assumptions, which require $\lambda_n>0$. Because $\lambda$ affects the scale
of the coefficient error, the left panel reports coverage and the right panel
reports the mean error bound divided by the independent-sketch reference
quantile. Thus, the panels show coverage probability and relative bound size,
respectively.

Across the four values of $\lambda$, the mean unadjusted bound obtained by
refitting is $0.968$--$1.044$ times the independent-sketch reference quantile.
With the order-statistic correction, the ratio is $1.033$--$1.131$. Coverage
is not monotone in $\lambda$ in this finite-sample experiment, so the results
do not support a claim that stronger regularization always improves or worsens
calibration.

\begin{figure}[ht]
\centering
\includegraphics[width=0.96\linewidth]{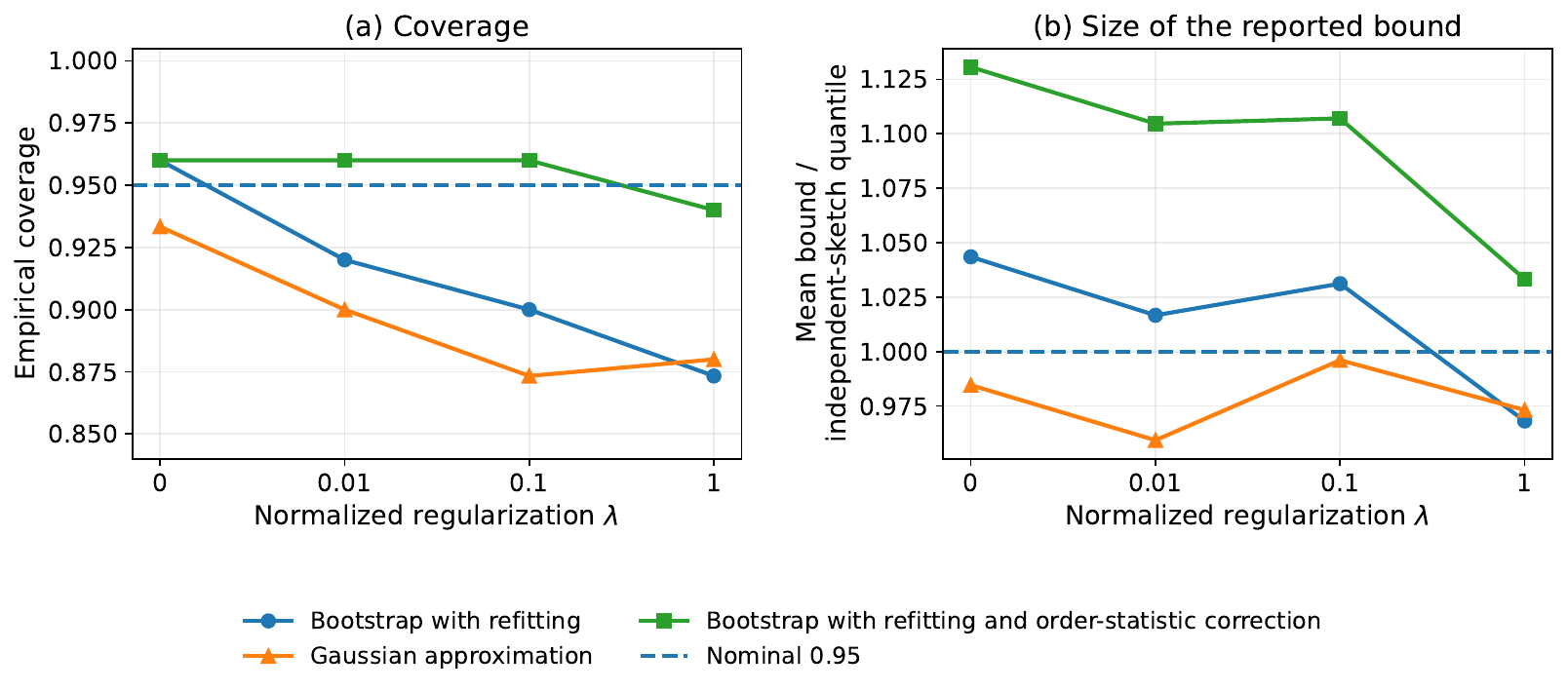}
\caption{Sensitivity to normalized regularization on RAND-HIE at $m/d=15$.
Left: empirical coverage. Right: mean reported error bound divided by the
independent-sketch reference quantile. Both bootstrap curves use errors
obtained by refitting.}
\label{fig:regularization}
\end{figure}

\begin{table}[t]
\centering
\caption{Regularization experiment on RAND-HIE at $m/d=15$. ``Unadjusted''
uses the empirical quantile of errors obtained by refitting; ``Adjusted''
applies the order-statistic correction to the same errors.}
\label{tab:regularization-summary}
\begingroup\small
\resizebox{\linewidth}{!}{%
\begin{tabular}{rrrrrrr}
\toprule
& \multicolumn{3}{c}{Empirical coverage} & \multicolumn{3}{c}{Mean bound / independent-sketch quantile} \\
\cmidrule(lr){2-4}\cmidrule(lr){5-7}
$\lambda$ & Unadjusted & Gaussian & Adjusted & Unadjusted & Gaussian & Adjusted \\
\midrule
0 & 0.960 & 0.933 & 0.960 & 1.044 & 0.985 & 1.131 \\
0.01 & 0.920 & 0.900 & 0.960 & 1.017 & 0.959 & 1.105 \\
0.1 & 0.900 & 0.873 & 0.960 & 1.031 & 0.996 & 1.107 \\
1 & 0.873 & 0.880 & 0.940 & 0.968 & 0.973 & 1.033 \\
\bottomrule
\end{tabular}}
\endgroup
\end{table}

\subsection{Sensitivity to the sketch distribution}

\Cref{fig:sketch-comparison,tab:sketch-summary} compares Gaussian projection,
Rademacher projection, and uniform row sampling on Diabetes for
$m/d\in\{5,10,15,20\}$. Each setting uses $120$ Monte Carlo repetitions and
$1200$ independent sketches to estimate the independent-sketch reference
quantile. For Gaussian and Rademacher projections, each
Gaussian-approximation quantile uses $3000$ Gaussian draws.
At $m/d=15$, the empirical coverage of the bootstrap with refitting is $0.933$,
$0.942$, and $0.933$, respectively. The corresponding ratios of the mean
error bound to the independent-sketch reference quantile are $1.003$, $0.991$,
and $0.995$. Thus, the bootstrap with refitting gives similar results for the
three sketch distributions in this experiment.

The small difference between the Gaussian and Rademacher projections does not
contradict \cref{cor:rademacher-optimal}. That result compares standardized
sketches with i.i.d. standardized entries and shows that the covariance of the leading
error term under the Rademacher sketch is no larger in the Loewner order. It
does not apply to uniform row sampling and does not imply that the finite-sample
difference between Gaussian and Rademacher projections must be large. We
evaluated the Gaussian approximation for the two projection sketches, but not
for uniform row sampling.

\begin{figure}[bth]
\centering
\includegraphics[width=\linewidth]{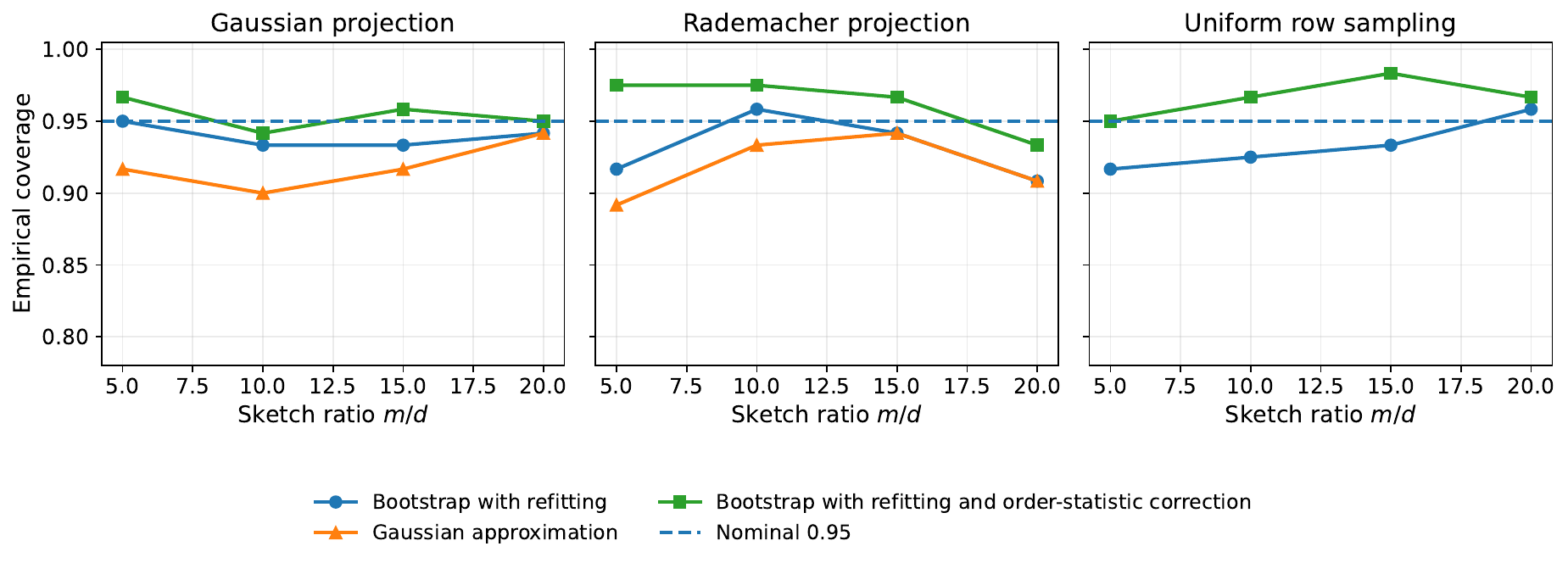}
\caption{Empirical coverage on Diabetes for three sketch distributions. Each
point uses $120$ Monte Carlo repetitions and $B=199$. Both bootstrap curves
use errors obtained by refitting. The Gaussian approximation was not evaluated
for uniform row sampling.}
\label{fig:sketch-comparison}
\end{figure}

\begin{table}[H]
\centering
\caption{Sketch-distribution experiment at $m/d=15$. ``Unadjusted'' uses the
empirical quantile of errors obtained by refitting; ``Adjusted'' applies the
order-statistic correction to the same errors.}
\label{tab:sketch-summary}
\begingroup\small
\resizebox{\linewidth}{!}{%
\begin{tabular}{lrrrrrr}
\toprule
& \multicolumn{3}{c}{Empirical coverage} & \multicolumn{3}{c}{Mean bound / independent-sketch quantile} \\
\cmidrule(lr){2-4}\cmidrule(lr){5-7}
Sketch & Unadjusted & Gaussian & Adjusted & Unadjusted & Gaussian & Adjusted \\
\midrule
Gaussian projection & 0.933 & 0.917 & 0.958 & 1.003 & 0.980 & 1.075 \\
Rademacher projection & 0.942 & 0.942 & 0.967 & 0.991 & 0.979 & 1.063 \\
Uniform row sampling & 0.933 & -- & 0.983 & 0.995 & -- & 1.063 \\
\bottomrule
\end{tabular}}
\endgroup
\end{table}

\subsection{Sensitivity to the number of bootstrap replicates}

The experiment in \cref{sec:finite-b-experiment} holds one sketch fixed and
uses $50{,}000$ bootstrap errors obtained by refitting to compute a
high-precision estimate of the conditional bootstrap quantile. It records how
often an estimate based on only $B$ replicates falls below this quantity. Here,
instead, we draw a new sketch in each Monte Carlo repetition and estimate
unconditional coverage as a function of $B$.

The experiment uses linearized bootstrap errors on RAND-HIE at $m/d=15$ with
$200$ Monte Carlo repetitions. In each repetition, it generates one sequence
of $B_{\max}=999$ errors. For every smaller value of $B$, it uses the first $B$
errors in that sequence. Thus, the unadjusted and adjusted bounds use exactly
the same errors and differ only in the selected order statistic.

The unadjusted coverage increases from $0.850$ at $B=20$ to $0.940$ at
$B=499$ and $999$. For $B=20$ or $49$, the required adjusted rank exceeds
$B$, so no finite adjusted error bound is available. When a finite adjusted
error bound is available, it gives higher coverage and a larger mean bound.
The difference between the adjusted and unadjusted bounds decreases as $B$
increases.

\begin{figure}[thb]
\centering
\includegraphics[width=0.82\linewidth]{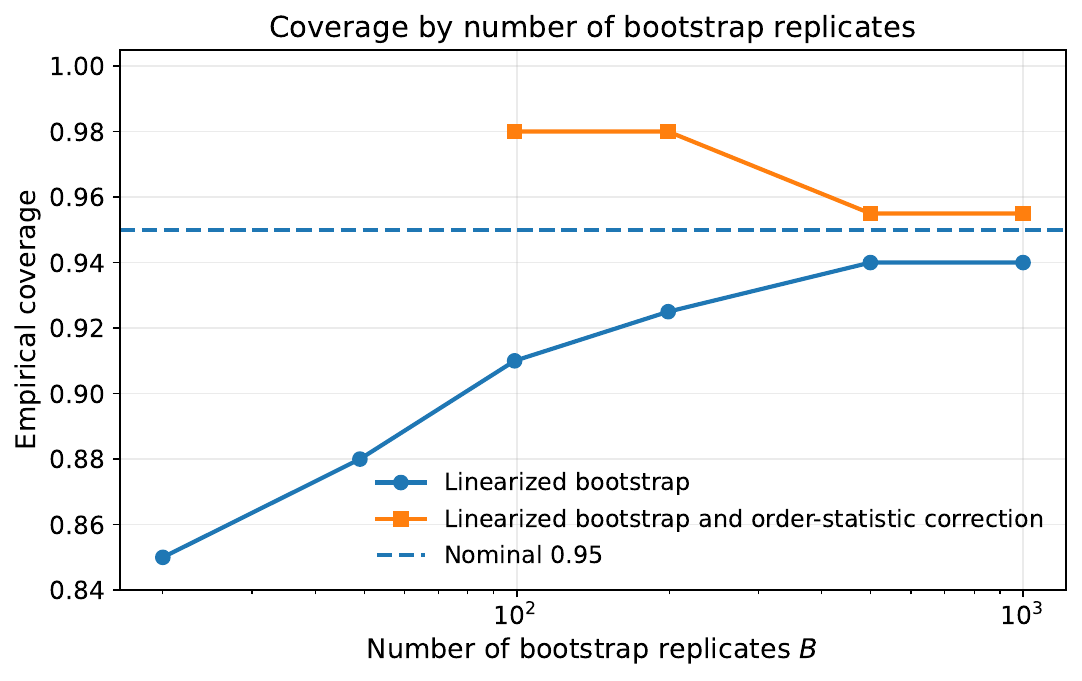}
\caption{Unconditional coverage as the number of bootstrap replicates changes
on RAND-HIE at $m/d=15$. Both curves use the same linearized bootstrap errors;
the order-statistic correction changes only the selected order statistic.}
\label{fig:bootstrap-size}
\end{figure}

\begin{table}[thb]
\centering
\caption{Unconditional coverage as $B$ changes, using linearized bootstrap
errors. A dash indicates that the required adjusted rank exceeds $B$, so no
finite adjusted error bound is available. ``Adjusted rank'' is the rank of the
selected error among the $B$ bootstrap errors. For every $B<B_{\max}$, the
calculation uses the first $B$ elements of the sequence with $B_{\max}=999$.}
\label{tab:bootstrap-size-summary}
\begingroup\small\setlength{\tabcolsep}{3pt}
\begin{tabular}{rccccc}
\toprule
$B$ & \shortstack{Unadjusted\\coverage} & \shortstack{Adjusted\\coverage} & \shortstack{Adjusted\\rank} & \shortstack{Mean unadjusted\\bound} & \shortstack{Mean adjusted\\bound} \\
\midrule
20 & 0.850 & -- & -- & 0.315 & -- \\
49 & 0.880 & -- & -- & 0.331 & -- \\
99 & 0.910 & 0.980 & 98 & 0.338 & 0.375 \\
199 & 0.925 & 0.980 & 195 & 0.338 & 0.363 \\
499 & 0.940 & 0.955 & 483 & 0.338 & 0.352 \\
999 & 0.940 & 0.955 & 961 & 0.337 & 0.346 \\
\bottomrule
\end{tabular}
\endgroup
\end{table}

\end{document}